\documentclass[11pt]{article}
\usepackage{amssymb}
\usepackage{amsmath}
\usepackage{amsfonts}
\usepackage{textcomp}
\usepackage{amsthm}
\usepackage{bookmark}
\usepackage{arydshln}
\usepackage{dsfont}
\usepackage{changes}%\usepackage[final]{changes}  %\listofchanges
\usepackage{color}
\usepackage{verbatim}
\usepackage{cite}
\usepackage{enumerate}
\usepackage{cases}
\usepackage{mathrsfs}
\date{}

\newtheorem{Theorem}{Theorem}[section]
\newtheorem{Lemma}{Lemma}[section]
\newtheorem{Remark}{Remark}[section]

\newtheorem{Proposition}{Proposition}[section]

\numberwithin{equation}{section} \theoremstyle{plain}

\def\R{\mathbb{R}^3}
\def\H{\mathcal{H}}

\def\p{\partial}

\def\f{\frac}
\def\N{\mathbb N}

\def\la{\lambda}

\def\loc{\text{loc}}

\title{Qualitative  Properties of Ground States for the Stationary Magnetopolaron  with a Weak Magnetic Field}
\author{
 Yujin Guo\thanks{School of Mathematics and Statistics, and Key Laboratory of Nonlinear Analysis $\&$ Applications (Ministry of Education), Central China Normal University, Wuhan 430079, P. R. China.  Y. J. Guo is partially supported by NSF of China (Grants 12225106 and 12371113), and National Key R $\&$ D Program of China (Grant 2023YFA1010001). Email: \texttt{yguo@ccnu.edu.cn}. },
\, Shuang Wu\thanks{School of Mathematics and Statistics, Central China Normal University,  Wuhan 430079,
	P. R. China.  Email: \texttt{swu@mails.ccnu.edu.cn}.},
\, and\, Chenyi Yao\thanks{School of Mathematics and Statistics, Central China Normal University,  Wuhan 430079,
	P. R. China. Email: \texttt{yaochenyi@mails.ccnu.edu.cn}.}}
\date{\today}
\begin{document}
\maketitle
\begin{abstract}
We investigate ground states of  the stationary  magnetopolaron in $\mathbb R^3$ with a constant magnetic field. When the strength $|b|$ of the magnetic field is sufficiently small, we prove the uniqueness and nondegeneracy of ground states, up to magnetic translations and phase shifts. Applying the uniqueness result, we further derive rigorously the symmetry and monotonicity of ground states  for
sufficiently small $|b|>0$. The second-order asymptotic expansion of the ground state energy is also derived as $b\to 0$.
\end{abstract}

\noindent {\it Keywords:} The magnetopolaron; Weak magnetic fields; Ground states; Uniqueness; Energy expansions

%\bigskip

\section{Introduction}
In this paper, we study ground states of the following stationary magnetopolaron
\begin{equation}\label{equ}
-(\nabla+iA)^2u+u =\big(|x|^{-1}*|u|^2\big)u \ \,\ \text{in \ } \mathbb R^3,
\end{equation}
where $u:\mathbb R^3\to\mathbb C$, and the vector potential $A$ is chosen in the
symmetric gauge
\begin{equation}\label{1.1*}
A(x)=\frac b2(-x_2,x_1,0),  \ \ x=(x_1,x_2,x_3)\in\mathbb R^3 .
\end{equation}
For the choice \eqref{1.1*}, the corresponding magnetic field is 
\begin{equation*}
B=\nabla\times A=(0,0,b),
\end{equation*}
where $|b|\in [0, \infty)$ denotes the strength of the magnetic field.
Throughout the whole paper, we focus on ground states of \eqref{equ} in the regime
where the strength $|b|>0$ is sufficiently small.

The magnetopolaron \eqref{equ} arises from the magnetic Pekar model, which provides an effective description of a Fr\"ohlich polaron in an external magnetic field. The magnetic term of \eqref{equ} describes the coupling of the magnetopolaron to the external magnetic field, while the nonlocal convolution term $\big(|x|^{-1}*|u|^2\big)u$ of \eqref{equ} accounts for the Coulomb-type self-interaction induced by the polarization field, see \cite{Est,Dev,GL,GHW,FG} and the references therein for more physical motivations of \eqref{equ}. From a mathematical point of view, the magnetic covariant derivative of \eqref{equ} breaks the usual translation invariance, and the nonlocal interaction of \eqref{equ} brings extra difficulties to the analysis. Especially, when $A=0$,  equation \eqref{equ} reduces to the classical Choquard-Pekar equation
\begin{equation}\label{w}
-\Delta u+u=\big(|x|^{-1}*|u|^2\big)u \ \ \hbox{in \ } \mathbb R^3 ,\ \ u\in  H^1(\mathbb R^3, \mathbb C).
\end{equation}
The uniqueness and other analytical properties of ground states for (\ref{w}) were analyzed in \cite{Lieb76,Lieb01,Len,MV} and the references therein, see also Remark \ref{remark:2.1} of the present paper. In particular, the nondegeneracy of ground states for (\ref{w}) was proved in \cite{Len}. This result verifies a key
spectral assumption arising in the analysis of effective solitary-wave
dynamics for nonrelativistic Hartree equations (cf. \cite{FGJS}).

We now define the ground state energy of \eqref{equ} by
\begin{equation}\label{inf}
e(b):=\inf\left\{I_b(u):\  u\in H_A^1(\mathbb R^3,\mathbb C)\setminus\{0\},\ \langle I_b'(u),u\rangle=0\right\},
\end{equation}
where the energy functional $I_b(u)$ satisfies
\begin{equation}\label{functional}
I_b(u)=\frac12\int_{\mathbb R^3}\big(|\nabla_Au|^2+|u|^2\big)dx-\frac14\iint_{\mathbb R^3\times\mathbb R^3}\frac{|u(x)|^2|u(y)|^2}{|x-y|}dxdy , \ \ \nabla_A u := \nabla u+iAu ,
\end{equation}
and the magnetic Sobolev space $H_A^1(\R,\mathbb{C})$ is defined by
\begin{equation*}
H_A^1(\mathbb R^3,\mathbb C):=\Big\{u\in L^2(\mathbb R^3,\mathbb C):\ \int_{\mathbb R^3}|\nabla_Au|^2dx<+\infty\Big\}
\end{equation*}
endowed with the inner product
\begin{equation*}
(u|v)_{H_A^1}= \int_{\R}\big[(\nabla_{A}u|\nabla_{A}v)+(u|v)\big]dx
:= \int_{\R}\big[\operatorname{Re}(\nabla_{A}u\cdot\overline{\nabla_{A}v})+\operatorname{Re}(u\bar{v})\big]dx.
\end{equation*}
A function $u\in H_A^1(\mathbb R^3,\mathbb C)$ is called a ground state of
\eqref{equ}, if $u$ is a weak solution of \eqref{equ} and satisfies $I_b(u)=e(b)$.
It deserves to remark that the presence of a magnetic field substantially complicates the analysis of ground states for \eqref{equ}. Although the existence, multiplicity and semiclassical behavior of solutions for magnetic Choquard-type equations have been investigated in a series of works (cf. \cite{AFY,CS,CC,JR1}), relatively few results are available on their uniqueness. More recently, the local uniqueness and cylindrical symmetry of prescribed-mass constrained minimizers for a magnetic Choquard equation were established in \cite{LLL} in the large-mass regime under suitable assumptions on the electric potential. On the other hand, the uniqueness and nondegeneracy of ground states for local nonlinear magnetic equations have been established in \cite{DB,An}. Nevertheless, to the best of our knowledge, the uniqueness and nondegeneracy of ground states for \eqref{equ} in the weak magnetic-field regime have not been established so far.  A main additional difficulty of addressing these issues lies in the nonlocal nature of the Coulomb nonlinearity.

Motivated by the above facts, the purpose of the present paper is to investigate qualitative properties of ground states for the magnetopolaron \eqref{equ} in the regime where the magnetic field strength $|b|>0$ is sufficiently small. More precisely, our primary goals are to establish the uniqueness (up to magnetic translations and phase shifts), nondegeneracy, symmetry and monotonicity properties of ground states for the magnetopolaron \eqref{equ}. We also derive the second-order asymptotic expansion of the ground state energy for the magnetopolaron \eqref{equ} as the magnetic field tends to zero.

\subsection{Main results}

The main purpose of this subsection is to introduce the main results of the present paper. We first notice the typical feature of \eqref{equ} that the usual translation invariance of \eqref{equ} is
replaced by the magnetic translation invariance. Moreover, we define for any solution $u$ of \eqref{equ} and any $a\in\mathbb R^3$,  
\begin{equation}\label{1.45}
\tau_{a,A}u(x)=e^{-iA(a)\cdot x}u(x-a),
\end{equation}
where the vector $A(\cdot)$ is as in (\ref{1.1*}). Note that the magnetic translation operator commutes with the covariant derivative $\nabla_A$, $i.e.$,
$\nabla_{A}\circ\tau_{a,A}=\tau_{a,A}\circ \nabla_A$. However, magnetic translations do not commute with each other, since
\begin{equation}\label{t}
\tau_{c,A}\circ\tau_{a,A}=e^{iA(a)\cdot c}\tau_{a+c,A},\ \ \forall a,c\in\R.
\end{equation}
Consequently, the uniqueness of ground states for \eqref{equ} must be formulated
modulo magnetic translations and constant phase shifts.

The first main result of the present paper concerns the following uniqueness and nondegeneracy of ground states for \eqref{equ} when $|b|>0$ is sufficiently small.

\begin{Theorem}\label{T1.1}
Suppose
$u$ and $v$ are ground states of \eqref{equ}, where $A=\frac b2(-x_2,x_1,0)$. Then there exists a sufficiently small constant $\varepsilon>0$ such that for any $|b|<\varepsilon$, there exist
$a\in\mathbb R^3$ and $\theta\in\mathbb R$ satisfying
\begin{equation}\label{1A:T1.1}
u\equiv e^{i\theta}\tau_{a,A}v \ \hbox{ \ in \ } \mathbb R^3.
\end{equation}
Moreover, if $w\in H_A^1(\mathbb R^3,\mathbb C)$ is a solution of
\begin{equation}\label{1:T1.1}
-\Delta_A w+w=\big(|x|^{-1}*|u|^2\big)w+2u\big(|x|^{-1}*(u|w)\big)\ \hbox{ \ in \ } \mathbb R^3,
\end{equation}
where $(u|w)=\operatorname{Re}(u\overline w)$ and $u$ is a ground state of \eqref{equ}, then $w$ satisfies
\begin{equation}\label{1B:T1.1}
w=c_1 \big(-\partial_{x_1}u-\frac{ib}{2}x_2u\big)+c_2 \big(-\partial_{x_2}u+\frac{ib}{2}x_1u\big)+c_3 (-\partial_{x_3}u)+\lambda iu
\end{equation}
for some constants $c_1,c_2,c_3,\lambda\in\mathbb R$.
\end{Theorem}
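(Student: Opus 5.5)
The plan is a perturbative argument around $b=0$, combining compactness of ground states with the nondegeneracy of the Choquard–Pekar ground state $Q$ from \cite{Len}. Here $Q$ denotes the unique positive radial ground state of \eqref{w}.

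\textbf{Step 1 (convergence to $Q$).} First I show $e(b)\to e(0)$ as $b\to0$. The upper bound comes from using $Q$, rescaled onto the Nehari manifold, as a trial function; the error is $O(b^2)$ because $\int|A|^2Q^2<\infty$. The lower bound follows from the diamagnetic inequality $|\nabla|u||\le|\nabla_Au|$. Next, take ground states $u_b$ with $b\to0$. Concentration-compactness, applied to $|u_b|$ after a magnetic translation $\tau_{a_b,A}$ (which preserves $I_b$), gives strong convergence in $H^1$ to a ground state of \eqref{w}. By Remark \ref{remark:2.1} and the uniqueness results of \cite{Lieb76}, after a phase rotation the limit is $Q$. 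Elliptic estimates then upgrade this to uniform exponential decay and $H^2$ convergence. In particular $\|\nabla_A u_b-\nabla Q\|_{L^2}\to0$ after normalization, since $A u_b\to0$ in $L^2$ thanks to the uniform decay.

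\textbf{Step 2 (nondegeneracy).} I argue by contradiction. Suppose there are $b_n\to0$ and solutions $w_n$ of \eqref{1:T1.1} that are not of the form \eqref{1B:T1.1}. Subtract the projection onto the span of the four listed functions. These do solve \eqref{1:T1.1}: the generators of magnetic translations are $-\partial_{x_j}u$ plus the phase correction, and $iu$ generates the phase rotation. Normalize the remainder in $H_A^1$ and make it $H_A^1$-orthogonal to that span. Passing to the limit with $u_n\to Q$, the remainder converges weakly to a solution of the real linearized operator $L_+$ together with $L_-$ (splitting $w$ into real and imaginary parts relative to $Q$). By \cite{Len}, the kernel is spanned by $\partial_jQ$ and $iQ$. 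Orthogonality forces the limit to be $0$. Uniform decay of $u_n$ makes the nonlocal terms compact, so we obtain strong convergence, which contradicts $\|w_n\|=1$.

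\textbf{Step 3 (uniqueness).} Let $u,v$ be ground states with $|b|$ small. By Step 1, after magnetic translations and phases both are close to $Q$. Using the implicit function theorem (modulation), choose $a,\theta$ so that $\eta:=u-e^{i\theta}\tau_{a,A}v$ is orthogonal to the four kernel directions of the linearization at $u$. The difference satisfies $\mathcal L_u\eta=N(\eta)$, where $N$ is quadratic in $\eta$ with coefficients bounded uniformly via Hardy–Littlewood–Sobolev. Step 2 gives a uniform coercivity estimate $\|\eta\|_{H_A^1}\le C\|\mathcal L_u\eta\|$ on the orthogonal complement, again proved by contradiction exactly as in Step 2. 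This yields $\|\eta\|\le C\|\eta\|^2$, so $\eta=0$ once $\|\eta\|$ is small, and Step 1 guarantees that smallness.

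\textbf{Main obstacle.} I expect the hard part to be the non-commutativity of magnetic translations and the growth of $A$. The terms $\frac{ib}{2}x_ju$ and the phase $e^{-iA(a)\cdot x}$ are unbounded. The modulation step and the compactness in Step 2 therefore need the uniform exponential decay of $u_b$. One also needs a careful check that $A\,w_n$-type terms vanish in the limit, which requires first proving uniform weighted bounds for $w_n$ by testing the equation against $|x|^2$-weights.
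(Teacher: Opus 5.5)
Your proposal is correct in outline, but the uniqueness part follows a genuinely different route from the paper.

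\textbf{The paper's route.} The paper proves uniqueness first and never linearizes at a single solution. It writes $f(\tilde u_n)-f(\tilde v_n)=W_n[\tilde u_n-\tilde v_n]$, where $W_n=\int_0^1 Df(t\tilde u_n+(1-t)\tilde v_n)\,dt$ is the averaged derivative. The difference is then an exact eigenvector, with eigenvalue $1$, of the compact self-adjoint operator $L_n$ in \eqref{Ln}. Proposition \ref{P4.1} proves min--max convergence $\lambda_k(L_n)\to\lambda_k(L)$ together with strong convergence of eigenfunctions. Using $\lambda_1(L)=3$, $\lambda_k(L)=1$ for $2\le k\le 5$ and $\lambda_6(L)<1$ (from \cite{Len} and \cite{FLR13}), the normalized difference is trapped in $\operatorname{span}\{w_n^2,\dots,w_n^5\}$. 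This contradicts the orthogonality \eqref{u3}. Nondegeneracy then follows from the same eigenvalue count, $\dim\ker(I-K_n)\le 4$.

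\textbf{Your route.} You prove nondegeneracy first by a direct compactness contradiction. You then get uniqueness by the standard Lyapunov--Schmidt scheme: linearize at $u$, bound the quadratic and cubic remainder via Hardy--Littlewood--Sobolev, and close with a uniform coercivity estimate on the complement of the kernel.

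\textbf{What each buys.} Your route needs only the kernel of the limiting operator. It uses no Morse-index information and no convergence of the whole spectrum, and it replaces the spectral machinery by one elementary remainder estimate.

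The price is in the normalization. You orthogonalize against the kernel at $u$, which involves $\partial_j u$ and $b x_k u$, and you modulate via the implicit function theorem. So you really do need uniform exponential decay, $H^2$ convergence, and $C^1$ dependence of $a\mapsto\tau_{a,A}v$ in $H^1_A$.

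The paper avoids all such a priori estimates in Theorem \ref{T1.1}. It orthogonalizes each ground state separately against the fixed limiting modes $\nabla u_0\cdot y$ and $iu_0$ (Lemma \ref{u1}). The parameters come from a Brouwer degree argument, which needs only locally uniform convergence $\psi_n\to\psi$.

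\textbf{Possible simplifications.} You could adopt the paper's normalization in your Step 3. The coercivity estimate on the complement of the limiting modes follows by the same contradiction argument, and then the kernel at $u$ is never needed.

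The weighted $|x|^2$ bounds for $w_n$ that you anticipate are also unnecessary. The following already suffice:
\begin{itemize}
\item weak convergence $w_n\rightharpoonup w$ and $\nabla_{A_n}w_n\rightharpoonup\nabla w$ in $L^2$, as in \cite[Lemma 2.2]{DB};
\item local strong convergence;
\item strong convergence of the normalized ground states to $u_0$ in $L^{12/5}$.
\end{itemize}
Together these give both the limiting equation and the convergence of $\int(w_n|Df(u_n)[w_n])\,dx$, exactly as in the proof of \eqref{Wto}.
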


The proof of Theorem~\ref{T1.1} is mainly based on a compactness argument and a spectral perturbation analysis. More precisely, we shall first prove that up to suitable magnetic translations and phase shifts, any ground state of \eqref{equ} strongly converges as $b\to0$ to the unique ground state $u_0>0$ of the limiting equation \eqref{w}. The magnetic translation and phase parameters are then fixed by imposing appropriate orthogonality conditions.

To prove the uniqueness of Theorem~\ref{T1.1}, we then need to analyze the nonlocal operator $L_n$ defined in \eqref{Ln}, where $b_n\to 0$ as $n\to\infty$. Our argument is motivated by the spectral perturbation approach developed in \cite{DB} for the local power-law magnetic Schr\"odinger equation, but the nonlocal nature of the Coulomb nonlinearity leads to an essential new difficulty. Indeed, in the local setting of \cite{DB}, the corresponding operator obtained from the averaged derivative of the nonlinearity is a pointwise real-linear operator, whereas the derivative of the Coulomb nonlinearity  in the present problem is given by
\[
w\mapsto
\big(|x|^{-1}*|u|^2\big)w
+2u\big(|x|^{-1}*(u|w)\big),
\]
which contains a genuine nonlocal convolution term. 
Since $L_n$ contains such nonlocal convolution terms, in Section \ref{S3} we shall establish the compactness and the spectral convergence of the operator $L_n$ as $n\to\infty$. Using the nondegeneracy of $u_0$, we are thus able to identify the eigenspace of the limiting operator $L$ corresponding to the eigenvalue $1$, which is generated by the three translation modes and the phase mode. This finally yields the uniqueness of ground states for \eqref{equ} as $b\to 0$. On the other hand, to prove the nondegeneracy of Theorem~\ref{T1.1}, we shall employ the same spectral perturbation analysis as above for the compact operator, associated with the linearized equation \eqref{1:T1.1} at a ground state, and show that its eigenspace corresponding to the eigenvalue $1$ has no additional directions beyond the four natural symmetry modes.

As an application of Theorem \ref{T1.1}, we next characterize the symmetry and monotonicity properties of ground states for \eqref{equ} as $b\to0$. For sufficiently small $|b|>0$, we shall prove that up to a suitable magnetic translation, a ground state of \eqref{equ} can be reduced to a ground state of the following Choquard--Pekar equation without the magnetic cross term
\begin{equation}\label{A11}
 -\Delta u+(1+|A|^2)u =\big(|x|^{-1}*|u|^2\big)u\ \  \hbox{in \ } \mathbb R^3 ,\ \ u\in   \H_b,
\end{equation}
where the vector $A$ is as in (\ref{1.1*}), and the space $\H_b$ is defined by \eqref{Hb}. The uniqueness and symmetry of real-valued ground states for (\ref{A11}) are addressed in Lemma 4.2, from which we shall prove the following symmetry and monotonicity properties.

\begin{Theorem}\label{T1.2}
Suppose $u$ is a ground state of \eqref{equ}, where $A=\frac b2(-x_2,x_1,0)$. Then there exists a sufficiently small constant $\varepsilon>0$ such that for any $0<|b|<\varepsilon$,
\begin{equation*}
u=e^{i\theta}\tau_{x_0,A}U_b
\end{equation*}
holds for some $x_0\in\mathbb R^3$ and $\theta\in\mathbb R$, where $U_b>0$ is a unique (up to translations in the $x_3$-direction) ground state of \eqref{A11}.
Moreover, $U_b$ may be chosen to be radially symmetric and decreasing in
$(x_1,x_2)$ and symmetric and decreasing in $x_3$.
\end{Theorem}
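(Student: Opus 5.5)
Since the energy, the Nehari constraint and the equation are all invariant under magnetic translations and phase shifts, Theorem \ref{T1.1} reduces the statement to exhibiting \emph{one} ground state of \eqref{equ} of the form $U_b$, with $U_b>0$ a ground state of \eqref{A11}. The plan is to compare the two problems through the diamagnetic inequality and then invoke uniqueness.

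\textbf{Step 1: inequality between the energy levels.} Let $J_b$ be the functional on $\H_b$,
\[
J_b(u)=\frac12\int_{\R}\big(|\nabla u|^2+(1+|A|^2)|u|^2\big)dx-\frac14\iint\frac{|u(x)|^2|u(y)|^2}{|x-y|}dxdy ,
\]
and let $m(b)$ be its Nehari level. For real-valued $u$,
\[
|\nabla_A u|^2=|\nabla u|^2+|A|^2u^2 ,
\]
since the cross term $2\operatorname{Re}(\nabla u\cdot\overline{iAu})$ vanishes. Hence $I_b=J_b$ on real functions, and real functions lie in $H^1_A$ exactly when they lie in $\H_b$. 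This gives $e(b)\le m(b)$.

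\textbf{Step 2: the levels coincide and real ground states of \eqref{A11} are ground states of \eqref{equ}.} Write $u=|u|e^{i\varphi}$ formally. Then
\[
|\nabla_A u|^2=|\nabla|u||^2+|u|^2|\nabla\varphi+A|^2 .
\]
The key point is that $|\nabla\varphi+A|^2\ge|A|^2$ fails pointwise, so a direct comparison does not work and the reverse inequality $e(b)\ge m(b)$ has to come from uniqueness. The argument runs as follows.

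1. Take a real positive ground state $U_b$ of \eqref{A11}. Such a state exists by minimizing $J_b$ on the Nehari manifold: the confinement from $|A|^2$ acts only in $(x_1,x_2)$, so concentration-compactness in $x_3$ is needed.
2. Take a ground state $u$ of \eqref{equ}. By Theorem \ref{T1.1} and its proof, suitably magnetically translated and phase-shifted, $u\to u_0$ as $b\to0$.
3. The function $u_0$ is radial, so it is invariant under rotations about the $x_3$-axis.
4. The operator $-\Delta_A$ commutes with the rotations $R_\alpha$ about the $x_3$-axis in the symmetric gauge \eqref{1.1*}. Hence $u\circ R_\alpha$ is again a ground state.
5. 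Uniqueness gives $u\circ R_\alpha=e^{i\theta_\alpha}\tau_{a_\alpha,A}u$. With the orthogonality normalization from the proof of Theorem \ref{T1.1} (centering conditions), this forces $a_\alpha=0$, so $u\circ R_\alpha=e^{i\theta_\alpha}u$.
6. The phases $\theta_\alpha$ form a character of $SO(2)$, hence $\theta_\alpha=k\alpha$. The $L^2$-closeness of $u$ to the radial $u_0$ forces $k=0$.
7. So $u$ is axially symmetric. Then
\[
A\cdot\nabla u=\frac b2\,\partial_\alpha u=0 ,
\]
and \eqref{equ} becomes
\[
-\Delta u+(1+|A|^2)u=\big(|x|^{-1}*|u|^2\big)u .
\]
This is real-linear with real coefficients, so $\operatorname{Re}u$ and $\operatorname{Im}u$ both solve it. Uniqueness modulo phase (through nondegeneracy, or an ODE-type argument) gives $u=e^{i\theta}U$ with $U$ real.
8. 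Therefore $e(b)=I_b(U)=J_b(U)\ge m(b)$. Combined with Step 1, $e(b)=m(b)$ and $U$ is a real ground state of \eqref{A11}. Applying Lemma 4.2 (uniqueness up to $x_3$-translations), $U=U_b$ up to sign and $x_3$-shift; an $x_3$-shift is a magnetic translation, since $A(a)=0$ for $a\parallel e_3$.

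\textbf{Step 3: symmetry and monotonicity.} These follow from Lemma 4.2, or directly from rearrangement. Replace $U_b$ by its Steiner symmetrization in $x_3$ and by its symmetric decreasing rearrangement in $(x_1,x_2)$. These rearrangements do not increase $\int|\nabla U|^2$. They also do not increase $\int|A|^2U^2$, because $|A|^2=\tfrac{b^2}{4}(x_1^2+x_2^2)$ is increasing radially in $(x_1,x_2)$. They do not decrease the Coulomb term, by Riesz's rearrangement inequality. By uniqueness, the rearranged function is a translate of $U_b$. Strict decrease then follows from the strong maximum principle applied to the derivatives, using that $|x|^{-1}*U_b^2$ inherits the same symmetry.

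\textbf{Main obstacle.} The hard step is Step 2, specifically ruling out $a_\alpha\neq0$ and $k\neq0$ in items 5 and 6. The plan is to use the normalization by orthogonality to the translation modes from Theorem \ref{T1.1}, which is rotation-covariant. Combined with continuity in $\alpha$ and $u\to u_0$, this should pin $a_\alpha=0$ and $k=0$.
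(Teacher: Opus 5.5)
Your outline follows the paper's route: the reduction via Theorem~\ref{T1.1}, the easy inequality $e(b)\le m(b)$ (the paper's $\widetilde e(b)$) from real functions, the reverse inequality from $A\cdot\nabla v=0$ for a suitably centered ground state $v$ (Proposition~\ref{P}), and then rearrangement plus uniqueness up to $x_3$-translations (Lemma~\ref{P5.1}). The gap is the step you leave open (items 5--6). The paper imports it from \cite{DB}, and what makes it work there is a different normalization. There $u$ is centered at its $L^2$ center of mass, which is finite by the exponential decay from Kato's inequality, giving $v=\tau_{x_0,A}u$ with $\int x|v|^2dx=0$. Now $\int x|v\circ R_\alpha|^2dx=R_\alpha^{-1}\int x|v|^2dx=0$, while $|e^{i\theta}\tau_{a,A}v|^2=|v(\cdot-a)|^2$ has center of mass $a$. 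So the identity $v\circ R_\alpha=e^{i\theta_\alpha}\tau_{a_\alpha,A}v$ gives $a_\alpha=0$ at once.

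The orthogonality conditions of Lemma~\ref{u1} are indeed rotation covariant, but that alone does not force $a_\alpha=0$. Both $\tilde u$ and $e^{i\theta_\alpha}\tau_{a_\alpha,A}\tilde u$ satisfy them, so you would also need local uniqueness of the modulation parameters. That means injectivity near $(0,0)$ of $(a,\theta)\mapsto$ (orthogonality functionals of $e^{i\theta}\tau_{a,A}\tilde u$), plus smallness of $(a_\alpha,\theta_\alpha)$. Lemma~\ref{u1} only produces a zero by a degree argument, so this needs an extra implicit-function argument. It can be done, and it would give $k=0$ for free, but it is more work.

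If you switch to the center of mass, your pairing argument for $k=0$ still works once you know $v$ is $L^2$-close, up to a phase, to an axially symmetric profile. Write $v=e^{i\theta'}\tau_{c,A}\tilde u$ with $\tilde u$ from Lemma~\ref{u1}. Axial symmetry of $|v|=|\tilde u|(\cdot-c)$ forces the horizontal part of $c$ to be small, so $v$ is close to $e^{i\theta'}u_0(\cdot-(0,0,c_3))$.

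Item 7 is unnecessary, and its justification does not work as stated. Nondegeneracy says nothing about $u$ being real up to a phase, and there is no ODE structure here. Since $\H_b$ is a space of complex functions, $A\cdot\nabla u=0$ already gives $|\nabla_Au|^2=|\nabla u|^2+|A|^2|u|^2$. Hence $u\in\H_b$, $I_b(u)=J_b(u)$, the two Nehari constraints coincide at $u$, and $m(b)\le e(b)$. Then the positive $U_b$ of item 1 satisfies $I_b(U_b)=J_b(U_b)=m(b)=e(b)$, so it is a ground state of \eqref{equ}, and Theorem~\ref{T1.1} finishes the argument. If you do want $u=e^{i\theta}|u|$, use the equality case of $\int|\nabla|u||^2\le\int|\nabla u|^2$ with $|u|>0$, as in Proposition~\ref{P}(2).

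Your Step 3 is correct and slightly simpler than the paper's. You get radial symmetry in $(x_1,x_2)$ from uniqueness up to $x_3$-translations, since an $x_3$-translate of a function radially decreasing in $(x_1,x_2)$ stays so. The paper instead uses the equality case of the weighted rearrangement inequality from \cite{Bella}.
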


The proof of Theorem~\ref{T1.2} mainly follows from variational rearrangement arguments and a reduction of the magnetic problem to the auxiliary problem \eqref{A11}. More precisely, the proof of Theorem~\ref{T1.2} proceeds in the following three steps.

In the first step of proving Theorem~\ref{T1.2}, we equivalently reformulate in Lemma~\ref{L4.1} the constraint ground state problem of \eqref{A11} into the following minimization problem
\begin{equation*}
\widetilde q(b):=\inf_{u\in\mathcal H_b\setminus\{0\}}\widetilde Q_b(u),
\ \ \widetilde Q_b(u):=\frac{\int_{\mathbb R^3}\bigl[|\nabla u|^2+(1+|A|^2)|u|^2\bigr]\,dx}{\Big(\iint_{\mathbb R^3\times\mathbb R^3}
\frac{|u(x)|^2|u(y)|^2}{|x-y|}\,dx\,dy\Big)^{1/2}},
\end{equation*}
where the space $\mathcal H_b$ is defined by \eqref{Hb}.
The crucial advantage of equivalently studying the problem $\widetilde q(b)$ lies in the fact that the quotient $\widetilde Q_b(u)$ does not increase under the rearrangements of $u$, so that the rearrangement arguments can be applied to the problem $\widetilde q(b)$. Note that $\widetilde q(b)$ however does not require the rearranged functions to be on the Nehari manifold. This overcomes successfully the essential difficulty that the Nehari constraint associated with \eqref{A11} is not preserved under symmetric rearrangements.

In the second step of proving Theorem~\ref{T1.2}, we first prove in Proposition~\ref{P} that  for sufficiently small $|b|>0$, ground states of \eqref{equ} and \eqref{A11} are equivalent to each other, up to suitable magnetic translations. Applying Theorem~\ref{T1.1}, this then yields the uniqueness of positive ground states for \eqref{A11}, up to translations in the $x_3$-direction. This uniqueness plays a crucial role in establishing the symmetry in the $x_3$-direction. In the local problem studied in \cite{An}, the evenness in $x_3$ is obtained by the unique continuation principle (cf. \cite{Ken}). Unfortunately, such an argument cannot be applied directly to \eqref{A11}, due to the nonlocal nature of the Coulomb nonlinearity. For this reason, we shall make full use of the above uniqueness result to investigate the symmetry in the $x_3$-direction of positive ground states for \eqref{A11} as $b\to 0$.

As the last step, we are able to finish the proof of Theorem~\ref{T1.2}. We apply the Schwarz rearrangement and the Steiner rearrangement in $(x_1,x_2)$ and $x_3$, respectively, to a positive ground state $U_b$ of \eqref{A11}, which corresponds to a minimizer of $\widetilde q(b)$ in view of the first step. Since $\widetilde Q_b$ does not increase under these rearrangements, the rearranged functions are again minimizers of $\widetilde q(b)$. The equality case of the transverse rearrangement yields the radial symmetry and monotonicity in $(x_1,x_2)$, while the uniqueness up to $x_3$-translations, together with the Steiner rearrangement, yields the symmetry and monotonicity in $x_3$. It thus follows from the second step that up to a suitable magnetic translation and a phase shift, any ground state of \eqref{equ} is represented by a positive ground state $U_b$, which is cylindrically symmetric and decreasing in $(x_1,x_2)$, symmetric and decreasing in $|x_3|$. This therefore finishes the proof of Theorem~\ref{T1.2}. We refer to Section \ref{S4} for the detailed proof of Theorem~\ref{T1.2}.

\begin{Remark}
It is shown from Theorem \ref{T1.2} that for any sufficiently small nonzero magnetic field, any ground state of \eqref{equ} is cylindrically symmetric in $(x_1,x_2)$ and even in $x_3$, up to a suitable magnetic translation. This is in contrast with  the full radial symmetry of the unique real-valued ground state $u_0$ for (\ref{w}) when $b=0$. This indicates that even a weak nonzero magnetic field changes the symmetry structure of ground states for \eqref{equ} from  the full radial symmetry into cylindrical symmetry around the $x_3$-axis.
\end{Remark}

Motivated by the investigations on the ground state energy of a strong magnetic polaron  in \cite{FG} and the references therein, we are finally concerned with the following asymptotic behavior of the ground state energy $e(b)$ in
the weak magnetic field regime $b\to0$.

\begin{Theorem}\label{T1.4}
The ground state energy  $e(b)$ defined by \eqref{inf} satisfies
\begin{equation}\label{ee}
e(b)=e(0)+\frac{b^2}{12}\int_{\mathbb R^3}|x|^2|u_0(x)|^2dx+o(b^2) \text{ \ as \ }b\to 0,
\end{equation}
where  $u_0=u_0(|x|)>0$  is the  unique  ground state of \eqref{w}.
\end{Theorem}

%Inspired by \cite{DB}, the proof of Theorem~\ref{T1.4} relies on a refined expansion of the even real-valued ground state $u_A$ of \eqref{equ} as $b\to0$, where Theorem~\ref{T1.2} is also employed. More precisely, the key step of proving  Theorem~\ref{T1.4} is to prove Proposition~\ref{7.1}, which is concerned with the following expansion
%\begin{equation}\label{1:thm3-M}
%u_A=u_0+w_A+o(b^2)\text{ \ in \ }H^1(\mathbb R^3) \text{ \
% as } \ b\to0,
%\end{equation}
%where $w_A$ is a suitable function, and $u_0$ denotes the positive real-valued radial ground state of \eqref{w}. Since the linearized operator $L_b$?? around $u_0$ has a nontrivial kernel generated by translations and phase invariance, comparing with the existing works, e.g.\cite{DB}, there appears a new difficulty in the proof of (\ref{1:thm3-M})???. To overcome this difficulty, we shall construct in Lemma~\ref{L7.2} the correction term $w_A$ of (\ref{1:thm3-M}) in the orthogonal complement of the kernel for the operator $L_b$??. The ground state energy \eqref{ee} is finally derived by applying the expansion (\ref{1:thm3-M}) and the relation $I_0'(u_0)=0$.

Inspired by \cite{DB}, the proof of Theorem~\ref{T1.4} relies on a refined
expansion of the even real-valued ground state $u_A$ of \eqref{equ} as
$b\to0$, where Theorem~\ref{T1.2} is also employed. More precisely, the key
step of proving Theorem~\ref{T1.4} is to establish  Proposition~\ref{7.1}, which gives
the following expansion
\begin{equation}\label{thm1.3:K}
u_A=u_0+w_A+o(b^2)\text{ \ in \ }H^1(\mathbb R^3)
\quad \text{as }\ b\to0,
\end{equation}
where $w_A$ is a suitable correction term, and  $u_0=u_0(|x|)>0$ denotes the unique   ground state of \eqref{w}. The main difficulty of proving (\ref{thm1.3:K}) is to obtain the desired estimate \begin{equation}\label{thm1.3:KK}
v_A:=u_A-u_0-w_A=o(b^2)\ \    \text{in }\, H^1(\R)\,\ \text{as}\,\  b\to 0,
\end{equation}
which is derived by analyzing the equation of $v_A$, together with
its corresponding linearized
estimate as $b\to 0$.  Compared with \cite{DB}, our argument directly yields (\ref{thm1.3:KK}) without requiring an additional pointwise decaying estimate for $v_A$. The ground state energy expansion \eqref{ee} is finally proved in Section \ref{S5} by applying the   expansion (\ref{thm1.3:K}) and the relation $I_0'(u_0)=0$.

%Although we focus on the isotropic whole-space model, many of the arguments developed here are expected to extend to other related Choquard-type models. For several related Choquard-type models, including the anisotropic Choquard--Pekar equation in $\mathbb R^3$ \cite{JR} and the Choquard--Pekar equation on a ball \cite{DR}, uniqueness, symmetry and nondegeneracy of ground states have been established in the nonmagnetic case. The present work suggests a possible approach to extending these qualitative results to the corresponding weak magnetic-field regimes.

This paper is organized as follows. In Section~\ref{S2}, we mainly prove the continuity of the ground state energy $e(b)$ and
the compactness of ground states for  \eqref{equ} as $b\to0$. Section~\ref{S3} is  then concerned with the proof
of Theorem \ref{T1.1}. In Section \ref{S4}, we shall prove   Theorem \ref{T1.2} on the symmetry and monotonicity
properties of ground states for \eqref{equ}  as $b\to0$.  Section~\ref{S5} is finally devoted to the proof of Theorem~\ref{T1.4} on the asymptotic behavior of the ground state energy $e(b)$ as $b\to0$.

\section{Existence of Ground States }\label{S2}
In this section, we first prove in Proposition \ref{ex} the existence of ground states for  \eqref{equ}, and we then address in Subsection \ref{subS2.1} the continuity of the ground state energy $e(b)$ and the compactness of ground states for  \eqref{equ} as $b\to0$.

We begin with some preliminary results. If $A(x)\in L_{loc}^2(\R, \mathbb{R}^3)$, then we have  (cf. \cite{Lieb01}) the following diamagnetic inequality
\begin{equation}\label{dmi}
|\nabla|u||\leq|\nabla_A  u|\ \text{ \ a.e. in \ }\R,\ \ u\in H_A^1(\R,\mathbb{C}),
\end{equation}
where $\nabla_A  u=(\nabla+iA)u$ is the covariant derivative. Moreover, the identity of (\ref{dmi}) holds, if and only if $\nabla_A  u=\operatorname{sign}(u)\nabla|u|$. To handle the nonlocal term of  \eqref{equ}, we also need the following two different types of Hardy-Littlewood-Sobolev inequality.

\begin{Lemma}
(cf. \cite{Lieb83}) Let $p,q>1$ and $0<s<3$ satisfy $\frac{1}{p}+\frac{s}{3}+\frac{1}{q}=2$, and suppose $f\in L^p(\R)$ and $h\in L^q(\R)$. Then there exists a sharp constant $C(p,q,s)>0$, independent of $f$ and $h$, such that
\begin{equation}\label{HLSI}
\Big|\iint_{\R\times\R}f(x)|x-y|^{-s}h(y)dxdy\Big|\leq C(p,q,s)\|f\|_p\|h\|_q.
\end{equation}
In particular, we have
\begin{equation}\label{hls}
    \Big\|\int_{\R}\f{f(y)}{|x-y|}dy\Big\|_q\leq C\|f\|_p,
\end{equation}
where $p,q>1$ satisfy $\f1q=\f1p-\f23$.
\end{Lemma}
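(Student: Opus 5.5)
This is the classical Hardy--Littlewood--Sobolev inequality, which the paper quotes from \cite{Lieb83}. We would only sketch why it holds, with the weak-type (Marcinkiewicz/Hunt) approach as the concrete route.

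\textbf{Step 1: estimate for the Riesz potential.} We first prove the operator bound
\[
\| |x|^{-s}*h\|_{p'} \le C\|h\|_q, \qquad \frac1{p'}=\frac1q-\frac{3-s}{3}.
\]
Given this, (\ref{HLSI}) follows from H\"older's inequality in $x$:
\[
\Big|\iint f(x)|x-y|^{-s}h(y)\,dx\,dy\Big|\le \|f\|_p\,\||x|^{-s}*h\|_{p'}.
\]
The exponent condition $\frac1p+\frac s3+\frac1q=2$ is exactly what makes these exponents match.

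\textbf{Step 2: Hedberg splitting.} We split the kernel at a radius $R>0$.
\begin{itemize}
\item The near part $\int_{|x-y|<R}|x-y|^{-s}|h(y)|\,dy$ is bounded by $C R^{3-s}Mh(x)$, where $Mh$ is the Hardy--Littlewood maximal function. This uses $s<3$ and a dyadic decomposition of the ball.
\item The far part is bounded by H\"older's inequality as $\|h\|_q\,\||y|^{-s}\chi_{|y|>R}\|_{q'}=C R^{3/q'-s}\|h\|_q$. This is finite because $sq'>3$, which is equivalent to $p>1$ under the exponent relation.
\end{itemize}

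\textbf{Step 3: optimize and conclude.} Choosing $R$ to balance the two terms gives the pointwise bound
\[
\big||x|^{-s}*h\big|(x)\le C\,(Mh(x))^{q/p'}\|h\|_q^{1-q/p'}.
\]
Taking $L^{p'}$ norms and applying the maximal inequality $\|Mh\|_q\le C\|h\|_q$ (valid since $q>1$) gives Step 1 and hence (\ref{HLSI}).

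\textbf{The special case (\ref{hls}).} Take $s=1$ and argue by duality: test against $h\in L^{q'}$ and apply (\ref{HLSI}) with exponents $p$ and $q'$. The condition $\frac1p+\frac13+\frac1{q'}=2$ becomes $\frac1q=\frac1p-\frac23$.

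\textbf{Main obstacle.} The maximal-function bound in Step 2 is routine. The genuinely hard point is the claim that the constant is \emph{sharp}, i.e.\ that the best constant is attained. That is Lieb's rearrangement and compactness argument in \cite{Lieb83}, which we would cite rather than reprove. Only finiteness of the constant is needed in this paper.
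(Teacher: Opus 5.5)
Your argument is correct, and it takes a genuinely different route from the paper, which gives no proof here and simply cites Lieb's 1983 paper.

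Your Hedberg estimate checks out:
\begin{itemize}
\item the near part is $\le CR^{3-s}Mh(x)$ since $s<3$;
\item the far part is finite since, under the exponent relation, $sq'>3\iff p>1$;
\item balancing at $R=(\|h\|_q/Mh(x))^{q/3}$ gives $C\,(Mh)^{q/p'}\|h\|_q^{1-q/p'}$ with $\frac{1}{p'}=\frac1q-\frac{3-s}{3}$;
\item the maximal theorem (using $q>1$) then finishes Step 1.
\end{itemize}
With $s=1$, Step 1 already gives \eqref{hls} directly, so your duality step is optional.

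Your route is short and self-contained modulo the maximal theorem, and it produces a finite but non-optimal constant. Lieb's rearrangement-and-compactness argument gives much more: existence of maximizing pairs for all admissible $p,q$, and the explicit value of the best constant in the diagonal case $p=q$. That diagonal case, $p=q=\frac65$, $s=1$, is exactly the one the paper uses, e.g.\ for $\iint |u(x)|^2|u(y)|^2|x-y|^{-1}\,dx\,dy\le C\|u\|_{12/5}^4$. However, the paper never uses the optimal value or the existence of maximizers; some finite constant is all it needs.

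One correction to your closing remark: the words ``there exists a sharp constant'' do not assert that the best constant is attained. Once any finite constant works, the sharp constant is just
$C(p,q,s):=\sup\{|\iint f(x)|x-y|^{-s}h(y)\,dx\,dy| : \|f\|_p=\|h\|_q=1\}$.
This is finite by your Step 1 and positive by testing with nonnegative nonzero $f,h$. So your sketch already proves the statement completely, and attainment is an extra theorem of Lieb rather than the main obstacle.
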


We next recall some results of ground states for the following elliptic problem without magnetics
\begin{equation}\label{w1}
-\Delta u+u=u(|x|^{-1}*|u|^2)  \text{ \ in \ }\R, \ \ u\in H^1(\R,\mathbb{C}),
\end{equation}
which is a natural limiting equation of \eqref{equ} as $b\to 0$. It is well known that \eqref{w1} admits ground states $u \in H^1(\R,\mathbb{C})$ in the sense that $I_0(u)=e(0)$ and $I'_0(u)=0$, where $e(0)$ and $I_0(\cdot)$ are as in (\ref{inf}) and \eqref{functional} with $b=0$, respectively.

%, then there exist $\theta \in \mathbb{R}$ and $a\in \R$ such that $v=e^{i\theta}\tau_{a,0}u$ in $\R$.

\begin{Remark}\label{remark:2.1}
It yields from \cite{Lieb76} that up to translations, positive real-valued ground states of  \eqref{w1} must be unique and radially symmetric. One can further derive from
\cite[Theorem 7.8]{Lieb01} that the set of all
complex-valued ground states for \eqref{w1} satisfies
\begin{equation*}
\Big\{e^{i\theta}u_0(\cdot-y):\, \theta\in[0,2\pi),\ y\in\mathbb R^3\Big\},
\end{equation*}
where $u_0=u_0(|x|)>0$ is the unique positive ground state of  \eqref{w1}.
\end{Remark}

We also note from \cite{Len} and Remark \ref{remark:2.1} that  the unique (up to magnetic translations and phase shifts) ground state of \eqref{w1} is
 nondegenerate in the following sense.

\begin{Lemma}\label{P2}  (\cite{Len})
Suppose   $u\in H^1(\R,\mathbb{C})$ is a ground state of \eqref{w1}, in the sense that $I_0(u)=e(0)$ and $I'_0(u)=0$. If the function $w\in H^1(\R,\mathbb{C})$ satisfies
\begin{equation}\label{12}
-\Delta w+w=(|x|^{-1}*|u|^2)w+2\big(|x|^{-1}*(u|w)\big)u\, \text{ \ in \ }\R,
\end{equation}
where $(u|w)=\operatorname{Re}(u\bar w)$, then there exist $y\in \R$ and $\la \in \mathbb{R}$ such that
 \begin{equation}\label{13}
 	w=\nabla u\cdot y+i\la u\text{ \ in \ }\R.
 \end{equation}
 \end{Lemma}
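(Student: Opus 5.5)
By Remark~\ref{remark:2.1}, every ground state of \eqref{w1} has the form $u=e^{i\theta}u_0(\cdot-y_0)$. The linearized equation \eqref{12} is invariant under $w\mapsto e^{-i\theta}w(\cdot+y_0)$, since $(u|w)$ is unchanged when both $u$ and $w$ are multiplied by the same phase. So I will reduce to the case $u=u_0>0$ radial. At the end the conclusion $w=\nabla u_0\cdot y+i\lambda u_0$ is transported back, which gives \eqref{13} because $\nabla(e^{i\theta}u_0(\cdot-y_0))=e^{i\theta}(\nabla u_0)(\cdot-y_0)$.

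With $u=u_0$ real, write $w=w_1+iw_2$ with $w_1,w_2$ real. Then $(u_0|w)=u_0w_1$, and \eqref{12} splits into two decoupled real equations:
\begin{equation*}
L_+w_1:=-\Delta w_1+w_1-(|x|^{-1}*u_0^2)w_1-2u_0\big(|x|^{-1}*(u_0w_1)\big)=0,
\end{equation*}
\begin{equation*}
L_-w_2:=-\Delta w_2+w_2-(|x|^{-1}*u_0^2)w_2=0.
\end{equation*}

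\textbf{The operator $L_-$.} This is a local Schr\"odinger operator with potential $V=-|x|^{-1}*u_0^2$, which tends to $0$ at infinity, and $L_-u_0=0$ with $u_0>0$. A positive eigenfunction is necessarily the ground state, and the lowest eigenvalue is simple (Perron--Frobenius for $e^{-tL_-}$). Hence $\ker L_-=\operatorname{span}\{u_0\}$, which gives $w_2=\lambda u_0$.

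\textbf{The operator $L_+$: nonradial sectors.} Differentiating the equation gives $L_+\partial_{x_j}u_0=0$, so the task is to show that $\ker L_+$ is exactly $\operatorname{span}\{\partial_{x_j}u_0\}_{j=1}^3$. I will decompose $w_1=\sum_{l,m}f_{lm}(r)Y_{lm}$ into spherical harmonics. By the multipole expansion of $|x-y|^{-1}$ (Newton's theorem), $L_+$ acts on each sector as a radial operator
\begin{equation*}
L_{+,l}f=-f''-\tfrac2rf'+\tfrac{l(l+1)}{r^2}f+f+Vf-2u_0K_l(u_0f),
\end{equation*}
where
\begin{equation*}
K_l g(r)=\frac{4\pi}{2l+1}\int_0^\infty\frac{\min(r,s)^l}{\max(r,s)^{l+1}}\,g(s)s^2\,ds .
\end{equation*}
The kernel of $K_l$ is positive, so the nonlocal part $-2u_0K_l(u_0\,\cdot)$ has a negative kernel. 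Therefore $e^{-tL_{+,l}}$ is positivity improving on each sector, and its lowest eigenvalue is simple with a positive eigenfunction.

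For $l=1$: since $u_0'<0$, the function $-u_0'(r)>0$ lies in $\ker L_{+,1}$. It is therefore the ground state of $L_{+,1}$, so $\ker L_{+,1}$ is one-dimensional; this produces exactly the three modes $\partial_{x_j}u_0$.

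For $l\ge2$: the centrifugal term increases with $l$, and $K_l\le K_1$ pointwise on positive functions. Comparing with $l=1$ through positive test functions then gives $L_{+,l}>L_{+,1}\ge0$ strictly, so these sectors have trivial kernel.

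\textbf{The radial sector $l=0$ (main obstacle).} Here $u_0$ itself is not in the kernel, so no positivity argument is available. My plan follows Lenzmann. Using Newton's theorem, the radial kernel equation becomes a coupled system of ODEs for $f$ and its Newtonian potential $\phi=|x|^{-1}*(u_0f)$. I will then use two identities: $L_+u_0=-2(|x|^{-1}*u_0^2)u_0$, and $L_+$ applied to the scaling generator $x\cdot\nabla u_0+2u_0$ equals $-2u_0$. With these I would compute a Wronskian-type quantity, or invoke analyticity and a shooting argument, to show that the only $L^2$ radial solution is $f\equiv0$. This is the step I expect to be hardest, since it genuinely relies on the specific structure of $u_0$ rather than on soft arguments.

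Combining the three sectors gives $w_1=\nabla u_0\cdot y$ and $w_2=\lambda u_0$, which is \eqref{13} for $u=u_0$; transporting back as above completes the proof.
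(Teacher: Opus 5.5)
The paper does not prove this lemma. It imports it from \cite{Len} and uses Remark~\ref{remark:2.1} to pass from $u_0$ to an arbitrary ground state, exactly as in your first step. Your reduction, the splitting into $L_\pm$, the treatment of $L_-$, and the sectors $l\ge1$ are correct and essentially follow the route of \cite{Len}. One minor point: $L_{+,l}>L_{+,1}$ is not an operator inequality, because $2u_0(K_1-K_l)(u_0\,\cdot)$ has a nonnegative kernel but need not be a nonnegative form. The comparison has to be made on the positive ground state of $L_{+,l}$.

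The genuine gap is the radial sector $l=0$, which is the real content of the theorem and for which you give only a plan. The two identities you cite give, by self-adjointness, only the orthogonality relations $\int u_0f=0$ and $\int(|x|^{-1}*u_0^2)u_0f=0$. Write $\phi=|x|^{-1}*(u_0f)$ and $\Lambda u_0:=2u_0+x\cdot\nabla u_0$. The Wronskian of the radial system for $(f,\phi)$ against the scaling solution $\bigl(\Lambda u_0,\,|x|^{-1}*(u_0\Lambda u_0)-1\bigr)$ also just returns $\int u_0f=0$. None of this excludes a nonzero $f$, and ``analyticity and a shooting argument'' does not name a mechanism. What is missing is a reason why the second regular solution cannot decay.

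Here is one way to supply it. Set $\Phi_0:=|x|^{-1}*u_0^2$. Regular radial solutions of $-\Delta f+f-\Phi_0f=2u_0\psi$, $-\Delta\psi=4\pi u_0f$ form a two-dimensional space parametrized by $(f(0),\psi(0))$. It is spanned by the scaling solution, with data $(2u_0(0),\Phi_0(0)-1)$, and the solution with data $(0,1)$. For the latter, put $f=u_0\eta$. The system becomes $(r^2u_0^2\eta')'=-2r^2u_0^2\psi$ and $(r^2\psi')'=-4\pi r^2u_0^2\eta$, which is cooperative in $(-\eta,\psi)$. Hence $\eta<0$ and $\psi\ge1$ on $(0,\infty)$, so $r^2u_0^2\eta'\le-2\int_0^r s^2u_0^2\,ds$. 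Since $u_0(r)=e^{-r+o(r)}$, this forces $|f|=u_0|\eta|\to\infty$ exponentially. Therefore an $L^2$ radial kernel element must be $f=\gamma\Lambda u_0$, and $L_+\Lambda u_0=-2u_0$ then gives $\gamma=0$. Some argument of this kind is needed; as written, your proof stops exactly at its hardest step.
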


Furthermore, if $u$ is a solution of  \eqref{w1} and the function $w$ given by \eqref{13} is a solution of the linearized problem \eqref{12}, then $u$ and $w$ are orthogonal in $H^1(\R,\mathbb{C})$, $i.e.$,
\begin{equation}\label{14}
	\int_{\R}\big[(\nabla u|\nabla w)+(u|w)\big]dx=0.
\end{equation}
In fact, it follows from \eqref{w1}  and \eqref{12}  that
\begin{equation*}
	\int_{\R}\big[(\nabla u|\nabla w)+(u|w)\big]dx-\int_{\R}(|x|^{-1}*|u|^2)(u|w)dx=0,
\end{equation*}
and
\begin{equation*}
	\int_{\R}\big[(\nabla u|\nabla w)+(u|w)\big]dx-3\int_{\R}(|x|^{-1}*|u|^2)(u|w)dx=0,
\end{equation*}
which then yield the orthogonality identity \eqref{14}.

The following proposition concerns the existence of ground states for $e(b)$ with $b\in\mathbb{R}$, together with  an equivalent characterization of $e(b)$.

\begin{Proposition}\label{ex}
If $A=\frac{b}{2}(-x_2,x_1,0)$, then  \eqref{equ} admits  a ground state  $u\in H_{A}^1(\R,\mathbb{C})$ in the sense that
\begin{equation*}
	I_{b}(u)=e(b) \ \text{and} \ I'_{b}(u)=0,
\end{equation*}
where $e(b)$ and $I_b(u)$ are as in \eqref{inf} and \eqref{functional}, respectively.
Moreover,
\begin{equation}\label{ec}
	e(b)=\f14\inf_{v\in  H_{A}^1(\R,\mathbb{C})\setminus \{0\}}Q_A(v)^2,
\end{equation}
where the functional $Q_A:H_{A}^1(\R,\mathbb{C})\setminus \{0\}\to \mathbb{R}$ is defined by
\begin{equation}\label{2.8A}
	Q_A(v)=\f{\int_{\R}(|\nabla_A  v|^2+|v|^2)dx}{\left(\iint_{\R\times \R}\f{|v(x)|^2|v(y)|^2}{|x-y|}dxdy\right)^{\f12}}.
\end{equation}
\end{Proposition}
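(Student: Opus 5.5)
The plan is to establish the variational characterization \eqref{ec} first, and then obtain a ground state by minimizing $Q_A$, using magnetic translations to recover compactness.

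\textbf{Step 1: the identity \eqref{ec}.} For $v\neq0$ write $T(v)=\int_{\R}(|\nabla_A v|^2+|v|^2)dx$ and $D(v)=\iint_{\R\times\R}|v(x)|^2|v(y)|^2|x-y|^{-1}dxdy$. Then
\[
I_b(tv)=\tfrac{t^2}{2}T(v)-\tfrac{t^4}{4}D(v),
\]
and $tv$ lies on the Nehari manifold exactly when $t^2=T(v)/D(v)$. This $t$ maximizes $t\mapsto I_b(tv)$, with maximal value $\frac14 T(v)^2/D(v)=\frac14 Q_A(v)^2$. On the Nehari manifold we have $I_b(u)=\frac14 T(u)=\frac14 Q_A(u)^2$, because $T(u)=D(u)$ there. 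Hence the infimum over the Nehari manifold equals $\frac14\inf Q_A^2$, which is \eqref{ec}. Positivity of this infimum follows from \eqref{HLSI} with $p=q=6/5$, $s=1$, giving $D(v)\le C\|v\|_{12/5}^4$. Combined with the diamagnetic inequality \eqref{dmi} and Sobolev for $|v|$, this yields $\|v\|_{12/5}^2\le C\,T(v)$, so $Q_A\ge c>0$.

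\textbf{Step 2: a minimizing sequence and the main obstacle.} Take a minimizing sequence $v_n$ for $Q_A$, normalized on the Nehari manifold, so that $T(v_n)=D(v_n)\to 4e(b)>0$. This sequence is bounded in $H_A^1$, and $|v_n|$ is bounded in $H^1$. The main obstacle is the loss of compactness: $H_A^1\hookrightarrow L^2_{\loc}$ is compact, but $\R$ is unbounded. Vanishing is excluded as follows. If $\sup_y\int_{B_1(y)}|v_n|^2\to0$, Lions' lemma applied to $|v_n|$ gives $|v_n|\to0$ in $L^{12/5}$, so $D(v_n)\to0$, which contradicts $D(v_n)\to4e(b)>0$. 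Hence there are $y_n$ with $\int_{B_1(y_n)}|v_n|^2\ge\delta>0$. Replace $v_n$ by $\tau_{-y_n,A}v_n$ (up to the phase from \eqref{t}), which preserves $T$ and $D$ since $\tau_{a,A}$ commutes with $\nabla_A$ and preserves $|v|$ up to translation. After extraction, $v_n\rightharpoonup u\neq0$ in $H_A^1$ and $v_n\to u$ in $L^2_{\loc}$ and a.e.

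\textbf{Step 3: $u$ is a minimizer.} Use a Brezis--Lieb splitting, with $r_n=v_n-u$:
\[
T(v_n)=T(u)+T(r_n)+o(1),\qquad D(v_n)=D(u)+D(r_n)+o(1).
\]
The nonlocal splitting holds by the standard Brezis--Lieb argument for the Coulomb term, based on \eqref{hls} and a.e. convergence. Set $m=\inf Q_A$. Then $T\ge mD^{1/2}$ on both pieces, and by strict concavity of $s\mapsto s^{1/2}$,
\[
D(u)^{1/2}+D(r_n)^{1/2}\ge (D(u)+D(r_n))^{1/2},
\]
with strict inequality unless one term vanishes. Passing to the limit in $T(v_n)/D(v_n)^{1/2}\to m$ therefore forces $\lim D(r_n)=0$ (since $D(u)>0$) and then $Q_A(u)=m$. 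Alternatively, one can simply use weak lower semicontinuity together with $T(u)\le\liminf T(v_n)$.

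\textbf{Step 4: the Euler--Lagrange equation.} The minimizer $u$ satisfies $Q_A'(u)=0$, that is,
\[
-\Delta_A u+u=\frac{T(u)}{D(u)}\big(|x|^{-1}*|u|^2\big)u .
\]
Rescaling $\tilde u=tu$ with $t^2=T(u)/D(u)$ puts $\tilde u$ on the Nehari manifold and gives $I_b'(\tilde u)=0$. Since $Q_A$ is invariant under scaling, Step 1 gives $I_b(\tilde u)=\frac14 m^2=e(b)$. So $\tilde u$ is the required ground state.
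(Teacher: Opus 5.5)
Your argument is correct and is essentially the paper's proof: the Nehari fibering gives \eqref{ec}, Lions' lemma together with magnetic translations rules out vanishing, and the nonlocal Brezis--Lieb splitting combined with strict subadditivity of $s\mapsto s^{1/2}$ (the paper's $q_b(\lambda)=\sqrt{\lambda}\,q_b(1)$ and \eqref{binding}) rules out dichotomy. Your explicit Euler--Lagrange computation replaces the paper's appeal to the Nehari manifold being a natural constraint, and your HLS, diamagnetic and Sobolev bound for $\inf Q_A>0$ makes explicit a positivity the paper leaves implicit. The one point to tighten is the ``alternatively'' remark in Step 3: weak lower semicontinuity of $T$ alone does not give $Q_A(u)\le m$, because Fatou only yields $D(u)\le\liminf D(v_n)$, which is the wrong direction, so that remark is valid only after $D(r_n)\to0$ has been established.
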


\noindent{\bf Proof.}
For any $v\in H_A^1(\mathbb R^3,\mathbb C)\setminus\{0\}$, denote
\begin{equation}\label{ABv}
A_v=\int_{\mathbb R^3}\big(|\nabla_Av|^2+|v|^2\big)dx,
\ \
B_v=\iint_{\mathbb R^3\times\mathbb R^3}\frac{|v(x)|^2|v(y)|^2}{|x-y|}dxdy.
\end{equation}
Then there exists a unique constant
\begin{equation*}
t_v=\left(\frac{A_v}{B_v}\right)^{1/2}>0
\end{equation*}
such that $t_vv$ satisfies the Nehari constraint
\begin{equation*}
\langle I_b'(t_vv),t_vv\rangle=0.
\end{equation*}
Moreover,
\begin{equation*}
I_b(t_vv)
=
\frac14\frac{A_v^2}{B_v}
=
\frac14 Q_A(v)^2 .
\end{equation*}
where $Q_A(v)$ is defined by (\ref{2.8A}).
Therefore, we have
\begin{equation}\label{2.85}
e(b)
=
\inf_{\substack{u\in H_A^1(\mathbb R^3,\mathbb C)\setminus\{0\}\\
\langle I_b'(u),u\rangle=0}} I_b(u)
=
\frac14
\inf_{v\in H_A^1(\mathbb R^3,\mathbb C)\setminus\{0\}} Q_A(v)^2 .
\end{equation}

We next claim that there exists a function $u\in H_A^1(\R,\mathbb{C})$ such that
\begin{equation}\label{qav}
Q_A(u)=\inf_{v\in H_{A}^1(\R,\mathbb{C})\setminus \{0\}}Q_A(v).
\end{equation}
To prove the claim (\ref{qav}), we note that
\begin{equation*}
\begin{aligned}
&\inf_{v\in H_{A}^1(\R,\mathbb{C})\setminus \{0\}}Q_A(v)\\
=&\inf_{v\in H_{A}^1(\R,\mathbb{C})\setminus \{0\}}\left\{\int_{\R}(|\nabla_A v|^2+|v|^2)dx:\,\iint_{\R\times\R}\f{|v(x)|^2|v(y)|^2}{|x-y|}dxdy=1\right\}\geq0.
\end{aligned}
\end{equation*}
Motivated by \cite[Theorem 3.1]{Est}, we denote
\begin{equation*}
q_b(\lambda)=\inf_{v\in H_{A}^1(\R,\mathbb{C})\setminus \{0\}}\left\{\int_{\R}(|\nabla_A v|^2+|v|^2)dx:\iint_{\R\times\R}\f{|v(x)|^2|v(y)|^2}{|x-y|}dxdy=\lambda\right\},\ \ \lambda>0,
\end{equation*}
so that $q_b(\lambda)=\sqrt{\lambda}q_b(1)$. This thus implies that if $q_b(1)> 0$, then we have
\begin{equation}\label{binding}
q_b(1-\lambda)+q_b(\lambda)=\sqrt{1-\lambda}q_b(1)+\sqrt{\lambda}q_b(1)>q_b(1),\ \ \forall  \, \lambda\in(0,1).
\end{equation}
 Let $\{v_n\}\subset H_A^1(\R,\mathbb{C})$ be a minimizing sequence of $q_b(1)$, $i.e.$, $\{v_n\}$ satisfies
\begin{equation}\label{minimiz}
\iint_{\R\times\R}\f{|v_n(x)|^2|v_n(y)|^2}{|x-y|}dxdy=1\ \ \forall   \, n\in\N^+,\text{ \ and \ }\lim_{n\to\infty}Q_A(v_n)=q_b(1).
\end{equation}
It is clear that $\{v_n\}$ is bounded uniformly in $H_A^1(\R,\mathbb C)$. Define
\begin{equation*}
f_n(r):=\sup_{y\in\R}\int_{B_r(y)}|v_n|^2dx, \ \ r>0.
\end{equation*}
If $\liminf_{n\to\infty} f_n(r)=0$ holds for every $r>0$, then we derive from \cite[Lemma 1.21]{Willem} that up to  a subsequence if necessary,
\begin{equation*}
v_n\to0\text{ \ strongly in \ }L^p(\R)\text{ \ as \ }n\to\infty,\ \ 2<p<6.
\end{equation*}
This then implies from \eqref{HLSI} that
\begin{equation*}
\lim_{n\to\infty}\iint_{\R\times\R}\f{|v_n(x)|^2|v_n(y)|^2}{|x-y|}dxdy=0,
\end{equation*}
which however contradicts with \eqref{minimiz}.  Therefore, there exists some $r>0$ such that
\begin{equation*}
q_b(1)\geq\liminf_{n\to\infty}f_n(r)>0.
\end{equation*}
This implies that there exist $\{a_n\}\subset\R$ and $0\not\equiv v\in H_A^1(\R,\mathbb C)$ such that up to a subsequence if necessary,
\begin{equation}\label{2.15M}
\tau_{a_n,A}v_n\rightharpoonup v\text{ \ weakly in \ }H_A^1(\R,\mathbb C)\text{ \ as \ }n\to\infty.
\end{equation}

We now claim that
\begin{equation}\label{2.14M}
0<\iint_{\R\times\R}\f{|v(x)|^2|v(y)|^2}{|x-y|}dxdy=1.
\end{equation}
On the contrary, suppose
\begin{equation*}
0<\lambda:=\iint_{\R\times\R}\f{|v(x)|^2|v(y)|^2}{|x-y|}dxdy<1.
\end{equation*}
It then follows from \cite[Lemma 2.2]{Frank14} and (\ref{2.15M}) that
\begin{equation}\label{dic}
\begin{aligned}
1=&\iint_{\R\times\R}\f{|\tau_{a_n,A}v_n(x)|^2|\tau_{a_n,A}v_n(y)|^2}{|x-y|}dxdy\\
= &\iint_{\R\times\R}\f{|\tau_{a_n,A}v_n(x)-v(x)|^2|\tau_{a_n,A}v_n(y)-v(y)|^2}{|x-y|}dxdy\\
&+\iint_{\R\times\R}\f{|v(x)|^2|v(y)|^2}{|x-y|}dxdy
+o(1) \text{ \ as \ }n\to\infty.
\end{aligned}
\end{equation}
Similar to \cite{Est}, we then obtain from \eqref{dic} that
\begin{equation}
q_b(1)\geq q_b(\lambda)+q_b(1-\lambda),\ \ 0<\lambda <1.
\end{equation}
This however contradicts with \eqref{binding}. Therefore, the claim (\ref{2.14M}) holds true.

We now conclude from above that $v$ is a minimizer of $q_b(1)$ by the weakly lower semi-continuity, which hence proves the claim \eqref{qav}.
Further, let $v\in H_A^1(\R,\mathbb C)\setminus\{0\}$ be a minimizer of $Q_A$, and set
$u=t_vv$, where $t_v=(A_v/B_v)^{1/2}$, and $A_v, B_v>0$ are defined in \eqref{ABv}. We then get that $u$ belongs to the Nehari
manifold and satisfies
\begin{equation*}
I_b(u)=\frac14Q_A(v)^2=e(b).
\end{equation*}
Since the Nehari manifold is a natural constraint, we have $I_b'(u)=0$, and $u$ is a ground state of \eqref{equ}. This proves that the infimum (\ref{ec})  is attained, which therefore completes the proof of Proposition \ref{ex}.\qed

\subsection{Compactness of ground states  as $b\to0$} \label{subS2.1}
The purpose of this subsection is to analyze the continuity of the ground state energy
$e(b)$ defined by \eqref{inf} and the compactness of ground states for \eqref{equ} as $b\to0$. We note that the first variation of $I_b$ defined by (\ref{functional}) satisfies
\begin{equation}\label{3.02}
\left\langle I_b'(u),v\right\rangle=\int_{\R}\big[(\nabla_Au|\nabla_Av)+(u|v)-(|x|^{-1}*|u|^2)(u|v)\big]dx,\ \ \forall u,v\in H_A^1(\R,\mathbb C),
\end{equation}
where $(u|v)=\operatorname{Re}(u\bar v)$. The main results of this subsection can be then stated as the following proposition.

\begin{Proposition}\label{P3.1}
Let the ground state energy $e(b)$ be defined by \eqref{inf}. Then we have
\begin{enumerate}
    \item  The energy $e(b)$ is continuous at $b=0$.
    \item Suppose $0\not\equiv u_n\in H_{A_n}^1(\R,\mathbb{C})$ is a ground state of \eqref{equ}, where $A=A_n=\f{b_n}{2}(-x_2,x_1,0)$ and $b_n\to0$ as $n\to\infty$, in the sense that
\begin{equation}\label{2.155}
I'_{b_n}(u_n)=0\  \text{ and } \ I_{b_n}(u_n)=e(b_n).
\end{equation}
Then there exist a subsequence, still denoted by $\{u_n\}$, of $\{u_n\}$,  a sequence $\{c_n\} \subset\R$ and a function  $u\in H^1(\R,\mathbb{C})$ satisfying $I_0(u)=e(0)$ and $I'_0(u)=0$ such that
\begin{equation}\label{strongcon}
\tau_{c_n,A_n}u_n\to u\text{ \ and \ }\nabla_{A_n}(\tau_{c_n,A_n}u_n)\to \nabla u\text{ \ strongly in\ }L^2(\R)\text{\  as\ }n\to \infty.
\end{equation}
\end{enumerate}
\end{Proposition}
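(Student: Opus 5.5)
We prove the two parts in order, using the characterization \eqref{ec} of Proposition \ref{ex}.

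\emph{Part 1: continuity of $e(b)$ at $b=0$.} We prove the two matching bounds separately.

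For the upper bound, we test $Q_{A}$ with $u_0$. Since $u_0$ is real, $|\nabla_A u_0|^2=|\nabla u_0|^2+|A|^2u_0^2$. The function $u_0$ decays exponentially, so $\int|A|^2u_0^2\le \frac{b^2}{4}\int|x|^2u_0^2\to0$. Hence
\[
\limsup_{b\to0}e(b)\le \tfrac14 Q_0(u_0)^2=e(0).
\]

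For the lower bound, we use the diamagnetic inequality \eqref{dmi}. For any $v\in H_A^1$, the function $|v|$ lies in $H^1$ and satisfies $Q_A(v)\ge Q_0(|v|)$, because the Coulomb term depends only on $|v|$. Therefore $e(b)\ge e(0)$ for all $b$, and continuity at $b=0$ follows.

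\emph{Part 2: compactness of ground states.} Let $u_n$ satisfy \eqref{2.155}. The Nehari identity gives $\int(|\nabla_{A_n}u_n|^2+|u_n|^2)=4e(b_n)$, which is bounded. By \eqref{dmi}, $\{|u_n|\}$ is then bounded in $H^1$.

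\emph{Step 2a: non-vanishing.} Vanishing is excluded exactly as in Proposition \ref{ex}: the Coulomb term equals $4e(b_n)\to4e(0)>0$, whereas Lions' lemma together with \eqref{HLSI} would force it to tend to zero. Hence there exist $c_n$ such that $\int_{B_r(c_n)}|u_n|^2\ge\delta>0$. Set $v_n:=\tau_{c_n,A_n}u_n$. Magnetic translations preserve $\|\nabla_{A_n}\cdot\|_2$, $|v_n|(x)=|u_n|(x-c_n)$, and the equation, so $v_n$ is again a ground state and $\int_{B_r(0)}|v_n|^2\ge\delta$.

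\emph{Step 2b: weak limit.} The family $\{v_n\}$ is bounded in $H^1_{loc}$: on a ball $B_R$ we have $|A_n|\le |b_n|R\to0$, so $\nabla v_n=\nabla_{A_n}v_n-iA_nv_n$ is bounded in $L^2(B_R)$. Passing to a subsequence, $v_n\rightharpoonup u\neq0$ weakly in $H^1_{loc}$, with $\nabla_{A_n}v_n\rightharpoonup\nabla u$ weakly in $L^2$. The last claim holds because $A_nv_n\to0$ in $L^2_{loc}$, and the $L^2$ bound gives weak convergence on all of $\mathbb R^3$. By weak lower semicontinuity, $u\in H^1$ and $u\not\equiv0$. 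Passing to the limit in the weak form \eqref{3.02} against $\varphi\in C_c^\infty$ yields $I_0'(u)=0$. Here the nonlocal term converges by local $L^p$ compactness combined with the uniform bounds from \eqref{hls}.

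\emph{Step 2c: strong convergence (the main obstacle).} Two routes are available.

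The first is to minimize by contradiction through a Brezis--Lieb splitting. Since $v_n$ minimizes $Q_{A_n}$, we have $Q_{A_n}(v_n)^2=4e(b_n)\to4e(0)$. We split both the quadratic form and the Coulomb term (as in \eqref{dic}, via \cite[Lemma 2.2]{Frank14}) into the parts carried by $u$ and by $v_n-u$. The remainder $r_n=v_n-u$ satisfies $Q_{A_n}(r_n)\ge 2\sqrt{e(b_n)}\ge2\sqrt{e(0)}$ by Part 1. Let $\lambda$ be the Coulomb mass of $u$; the rest of the mass $1-\lambda$ (after normalization) escapes. Subadditivity then gives
\[
\sqrt{4e(0)}\ \ge\ \sqrt{\lambda}\,Q_0(u)+\sqrt{1-\lambda}\,\sqrt{4e(0)}
\ \ge\ (\sqrt\lambda+\sqrt{1-\lambda})\sqrt{4e(0)}.
\]
This forces $\lambda=1$, exactly as in \eqref{binding}. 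Consequently the Coulomb term converges, so the norms converge: $\int(|\nabla_{A_n}v_n|^2+|v_n|^2)\to\int(|\nabla u|^2+|u|^2)$. Combined with weak convergence, this gives strong convergence in \eqref{strongcon}.

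The second, alternative, route uses that $u\neq0$ solves \eqref{w1}, so $I_0(u)\ge e(0)$. Weak lower semicontinuity applied to $\frac14\int(|\nabla_{A_n}v_n|^2+|v_n|^2)=e(b_n)$ gives
\[
e(0)=\lim e(b_n)\ge\tfrac14\|u\|_{H^1}^2=I_0(u)\ge e(0).
\]
Hence the norms converge and $I_0(u)=e(0)$. The delicate point here is the weak convergence of $\nabla_{A_n}v_n$ on all of $\mathbb R^3$, since $A_n$ is unbounded; we handle it by testing only with compactly supported fields, which suffices given the uniform $L^2$ bound. This route is simpler, and we would use it.
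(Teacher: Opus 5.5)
Your proof is correct. Part 2, via the second route you say you would use, is essentially the paper's argument. Lemma \ref{L3.1} produces a nonvanishing magnetic translate whose nontrivial weak limit $u$ satisfies $I_0'(u)=0$; the paper delegates to cited lemmas what you verify by hand in Step 2b. Strong convergence then follows from $\lim e(b_n)=e(0)=I_0(u)$. The paper gets $\liminf e(b_n)\ge I_0(u)$ from Fatou's lemma on the Coulomb term, and you get it from weak lower semicontinuity of the quadratic form; on the Nehari manifold these are the same thing.

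The genuine difference is in Part 1. The paper proves the upper bound by density of $C_c^1$ in $H^1_A$ together with continuity of $b\mapsto Q_A(v)$ for fixed compactly supported $v$. It obtains the lower bound only as lower semicontinuity, by applying Lemma \ref{L3.1} to a sequence of ground states and using $\liminf I_{b_n}(u_n)\ge I_0(u)\ge e(0)$.

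Your diamagnetic bound $Q_A(v)\ge Q_0(|v|)$ instead gives the global inequality $e(b)\ge e(0)$ for every $b$, with no compactness needed. Testing with the real function $u_0$ (admissible because $|x|u_0\in L^2$) gives the quantitative bound $e(b)\le \frac14 Q_A(u_0)^2=e(0)+\frac12\int_{\mathbb{R}^3}|A|^2u_0^2\,dx+O(b^4)$. This is already the upper half of Theorem \ref{T1.4}.

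Decoupling Part 1 from compactness also simplifies your Step 2a. The Coulomb term of $u_n$ equals $4e(b_n)\ge 4e(0)>0$, so Lions' lemma rules out vanishing directly. The paper instead needs an interpolation estimate combined with the Nehari identity. In exchange, the paper's upper bound uses no decay information on $u_0$.

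Two cosmetic slips:
\begin{itemize}
\item $u\not\equiv 0$ comes from the strong $L^2(B_r)$ convergence given by Rellich's theorem, not from weak lower semicontinuity, which points the wrong way.
\item Since $|\tau_{a,A_n}u_n(x)|=|u_n(x-a)|$, you must translate by $-c_n$, not $c_n$, to move the mass from $B_r(c_n)$ to $B_r(0)$.
\end{itemize}
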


In order to prove Proposition \ref{P3.1}, we  first prove the following lemma.

\begin{Lemma}\label{L3.1}
Suppose the sequence $\{u_n\}\subset H_{A_n}^1(\R,\mathbb{C})\setminus\{0\}$ satisfies
\begin{equation}\label{3.01}
I'_{b_n}(u_n)=0 \ \ \forall \, n\in\N^+,\ \ \text{and } \ \limsup_{n\to \infty} I_{b_n}(u_n)<+\infty,
\end{equation}
where $A_n:=\frac{b_n}{2}(-x_2,x_1,0)$ and $b_n\to0$ as $n\to\infty$.
Then there exist a subsequence, still denoted by $\{u_n\}$, of $\{u_n\}$,  a sequence $\{c_n\} \subset\R$ and a function  $0\not\equiv u\in H^1(\R,\mathbb{C})$ such that
\begin{equation}\label{weaklim}
\tau_{c_n,A_n}u_n \rightharpoonup u ,\ \nabla_{A_n}(\tau_{c_n,A_n}u_n)\rightharpoonup \nabla u\text{ \ weakly in \ }L^2(\R)\text{ \ as  \ }n \to \infty ,
\end{equation}
and
\begin{equation}\label{solution}
I_0'(u)=0.
\end{equation}
Moreover, we have
\begin{equation}\label{lowercon}
	\liminf_{n\to \infty}I_{b_n}(u_n)\geq I_0(u).
\end{equation}
\end{Lemma}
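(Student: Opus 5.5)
We begin with uniform bounds. The Nehari identity $\langle I_{b_n}'(u_n),u_n\rangle=0$ gives
\[
I_{b_n}(u_n)=\tfrac14\int_{\R}\big(|\nabla_{A_n}u_n|^2+|u_n|^2\big)dx=\tfrac14 B_{u_n}.
\]
Together with \eqref{3.01}, this bounds $\|u_n\|_{H^1_{A_n}}$ uniformly. By the diamagnetic inequality \eqref{dmi}, $\{|u_n|\}$ is then bounded in $H^1(\R)$. We also need a lower bound. Apply \eqref{HLSI} with $p=q=6/5$ and use Sobolev for $|u_n|$:
\[
A_{u_n}=B_{u_n}\le C\|u_n\|_{12/5}^4\le C A_{u_n}^2 .
\]
Hence $A_{u_n}\ge c>0$. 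As in Proposition \ref{ex}, if $\sup_y\int_{B_1(y)}|u_n|^2\to0$, then Lions' lemma \cite[Lemma 1.21]{Willem} applied to $|u_n|$ gives $|u_n|\to0$ in $L^{12/5}$. This forces $B_{u_n}\to0$, which contradicts $B_{u_n}=A_{u_n}\ge c$. So there exist $c_n$ with $\int_{B_1(c_n)}|u_n|^2\ge\delta>0$.

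Set $v_n:=\tau_{c_n,A_n}u_n$. Because magnetic translations commute with $\nabla_{A_n}$ and preserve $|\cdot|$, each $v_n$ solves \eqref{equ} with $A=A_n$. It has the same norms as $u_n$, and $\int_{B_1(0)}|v_n|^2\ge\delta$. Extract a subsequence with $v_n\rightharpoonup u$ and $\nabla_{A_n}v_n\rightharpoonup G$ weakly in $L^2$. On any ball $B_R$ we have $|A_n|\le |b_n|R/2\to0$, so $\nabla v_n=\nabla_{A_n}v_n-iA_nv_n$ is bounded in $L^2(B_R)$. Thus $v_n$ is bounded in $H^1_{\rm loc}$, and Rellich gives $v_n\to u$ strongly in $L^2_{\rm loc}$ and in $L^p_{\rm loc}$ for $p<6$. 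Testing against $\varphi\in C_c^\infty$ and using $A_nv_n\to0$ in $L^2_{\rm loc}$ shows $G=\nabla u$, so $u\in H^1$. This is \eqref{weaklim}. Local strong convergence yields $\int_{B_1}|u|^2\ge\delta$, hence $u\not\equiv0$.

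To obtain \eqref{solution}, fix $\varphi\in C_c^\infty(\R,\mathbb C)$ and pass to the limit in $\langle I_{b_n}'(v_n),\varphi\rangle=0$ using \eqref{3.02}. The linear terms converge because $\nabla_{A_n}\varphi\to\nabla\varphi$ strongly in $L^2$ (the support is compact) and $\nabla_{A_n}v_n\rightharpoonup\nabla u$. The nonlocal term is the main obstacle. We would handle it in three steps. First, $|v_n|^2$ is bounded in $L^{6/5}\cap L^{3/2}$ and converges to $|u|^2$ in $L^1_{\rm loc}$, so $|v_n|^2\rightharpoonup|u|^2$ weakly in $L^{6/5}$. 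Second, by \eqref{hls}, $|x|^{-1}*|v_n|^2\rightharpoonup|x|^{-1}*|u|^2$ weakly in $L^6$. Third, $(v_n|\varphi)\to(u|\varphi)$ strongly in $L^{6/5}$ because the support is compact. Weak–strong pairing then gives convergence of $\int(|x|^{-1}*|v_n|^2)(v_n|\varphi)$.

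Finally, for \eqref{lowercon} we use the Nehari form $I_{b_n}(u_n)=\frac14\|v_n\|_{H^1_{A_n}}^2$. By weak lower semicontinuity of the $L^2$ norms of $v_n$ and $\nabla_{A_n}v_n$,
\[
\liminf_n I_{b_n}(u_n)\ge\tfrac14\int_{\R}\big(|\nabla u|^2+|u|^2\big)dx .
\]
Since $I_0'(u)=0$, we have $\langle I_0'(u),u\rangle=0$, and so $I_0(u)=\frac14\int(|\nabla u|^2+|u|^2)$. This gives \eqref{lowercon}.
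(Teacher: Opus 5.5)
Your proof is correct and follows the same outline as the paper's proof. Both use the Nehari identity for bounds, then nonvanishing, a magnetic translation to a nonzero weak limit, passage to the limit in the weak form, and lower semicontinuity. The differences are only in routine tool choices. You obtain nonvanishing from the explicit lower bound $A_{u_n}\ge c$ plus Lions' lemma, where the paper uses the inequality of \cite[Lemma 2.3]{Mor}. You derive \eqref{lowercon} from weak lower semicontinuity of $\|\nabla_{A_n}v_n\|_2^2+\|v_n\|_2^2$, where the paper applies Fatou's lemma to the Coulomb term. You also carry out by hand the identification of the weak limit and the convergence of the nonlocal term, which the paper takes from \cite{DB} or states without proof in \eqref{02}.

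The only slip is a sign. With the convention \eqref{1.45}, $|\tau_{c,A}u(x)|=|u(x-c)|$, so to bring the mass in $B_1(c_n)$ to the origin you should set $v_n=\tau_{-c_n,A_n}u_n$. This is a harmless relabeling.
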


\noindent{\bf Proof.}
We first derive from \eqref{3.02} and \eqref{3.01} that
 \begin{equation*}
 	\begin{aligned}
 	\f14\int_{\R}\big(|\nabla_{A_n}u_n|^2+|u_n|^2\big)dx&=I_{b_n}(u_n),
 	\end{aligned}
 \end{equation*}
which then yields  that the sequence $\{u_n\}$ is bounded uniformly in $H_{A_n}^1(\R,\mathbb{C})$ as $n\to\infty$, $i.e.$,
\begin{equation}\label{bound}
	\limsup_{n\to \infty}\int_{\R}\big(|\nabla_{A_n}u_n|^2+|u_n|^2\big)dx<+\infty.
\end{equation}
By Hardy-Littlewood-Sobolev inequality \eqref{HLSI}, we have
 \begin{equation*}
 		\iint_{\R\times\R}\f{|u_n(x)|^2|u_n(y)|^2}{|x-y|}dxdy\leq C\|u_n\|_{\f{12}{5}}^4.
  \end{equation*}
Applying \cite[Lemma 2.3]{Mor}, we then deduce from \eqref{dmi} that
\begin{equation}\label{H}
\begin{aligned}
	&\iint_{\R\times\R}\f{|u_n(x)|^2|u_n(y)|^2}{|x-y|}dxdy\\
    \leq& C\left(\sup_{y\in \R}\int_{B_1(y)}|u_n|^{\f{12}{5}}dx\right)^{\f{5}{18}}\left(\int_{\R}\big(\big|\nabla|u_n|\big|^2+|u_n|^2\big)dx\right)^{\f53}\\
	\leq& C\left(\sup_{y\in \R}\int_{B_1(y)}|u_n|^{\f{12}{5}}dx\right)^{\f{5}{18}}\left(\int_{\R}\big(|\nabla_{A_n}u_n|^2+|u_n|^2\big)dx\right)^{\f53},
\ \ \forall  n\in \mathbb{N}^+.
\end{aligned}
\end{equation}
On the other hand, we obtain from \eqref{3.02} and \eqref{3.01} that
\begin{equation}\label{HH}
	\begin{aligned}
	\int_{\R}\big(|\nabla_{A_n}u_n|^2+|u_n|^2\big)dx&=	 \iint_{\R\times\R}\f{|u_n(x)|^2|u_n(y)|^2}{|x-y|}dxdy+\left<I'_{b_n}(u_n),u_n\right>\\&
	=\iint_{\R\times\R}\f{|u_n(x)|^2|u_n(y)|^2}{|x-y|}dxdy,
\ \ \forall  n\in \mathbb{N}^+.
	\end{aligned}
\end{equation}
It then yields from \eqref{H} and \eqref{HH} that
\begin{equation}\label{H1}
\begin{aligned}
&\iint_{\R\times\R}\f{|u_n(x)|^2|u_n(y)|^2}{|x-y|}dxdy\\ \leq& C\left(\sup_{y\in \R}\int_{B_1(y)}|u_n|^{\f{12}{5}}dx\right)^{\f{5}{18}}\left(\iint_{\R\times\R}\f{|u_n(x)|^2|u_n(y)|^2}{|x-y|}dxdy\right)^{\f53} ,
\ \ \forall  n\in \mathbb{N}^+.
\end{aligned}
\end{equation}
Applying \eqref{bound}, this thus implies that
\begin{equation*}
	\liminf_{n\to \infty}\left(\sup_{y\in \R}\int_{B_1(y)}|u_n|^{\f{12}{5}}dx\right)^{\f{5}{18}}>0.
\end{equation*}
Hence, there exists a sequence $\{c_n\}\subset\R$ such that
\begin{equation}\label{H2}
	\liminf_{n\to \infty}\int_{B_1(c_n)}|u_n|^{\f{12}{5}}dx>0.
\end{equation}
We thus conclude from \cite{Lions} that the sequence $\{u_n\}$ cannot vanish. Therefore, there exists a magnetic translation $\tau_{c_n,A_n}$ such that
\begin{equation}\label{nonvan}
	\liminf_{n\to \infty}\int_{B_1(0)}|\tau_{c_n,A_n}u_n|^{\f{12}{5}}dx>0.
\end{equation}

It follows from (\ref{1.45}) and \eqref{bound} that
\begin{equation*}
\liminf_{n\to \infty}\int_{\R}\big(|\nabla_{A_n}(\tau_{c_n,A_n}u_n)|^2+|\tau_{c_n,A_n}u_n|^2\big)dx=\liminf_{n\to \infty}\int_{\R}\big(|\nabla_{A_n}u_n|^2+|u_n|^2\big)dx<+\infty.
\end{equation*}
Applying \cite[Lemmas 2.2 and 2.3]{DB}, it then yields from \eqref{nonvan} that there exist $0\not\equiv u\in H^1(\R,\mathbb{C})$ and a subsequence, still denoted by $\{u_n\}$, of $\{u_n\}$ such that
\begin{equation*}
\tau_{c_n,A_n}u_n\rightharpoonup u\ \text{ and }\ \nabla_{A_n}(\tau_{c_n,A_n}u_n)\rightharpoonup \nabla  u\ \text{ weakly in}\ L^2(\R)\text{ \ as \ }n\to\infty,
\end{equation*}
which proves \eqref{weaklim}. We then derive from \eqref{weaklim} that for every $\phi \in C_c^1(\R,\mathbb{C})$,
\begin{equation}\label{01}
\nabla_{A_n}\phi \to \nabla  \phi\text{ \ strongly in \ } L^2(\R)\text{ \ as \ }n\to\infty,
\end{equation}
and
\begin{equation}\label{02}
\begin{aligned}
&\lim_{n\to\infty}\iint_{\R\times \R}\f{|\tau_{c_n,A_n}u_n(y)|^2\big(\tau_{c_n,A_n}u_n(x)|\phi(x)\big)}{|x-y|}dxdy\\
=& \iint_{\R\times\R}\f{|u(y)|^2\big(u(x)|\phi(x)\big)}{|x-y|}dxdy.
\end{aligned}
\end{equation}
Since $\tau_{c_n,A_n}u_n$ is also a critical point of $I_{b_n}$, we obtain that
\begin{equation*}
\left\langle I'_{b_n}(\tau_{c_n,A_n}u_n),\phi\right\rangle=0.
\end{equation*}
 By the weak convergence of \eqref{weaklim}, we thus deduce from   \eqref{01} and \eqref{02} that
\begin{equation*}
\left\langle I'_0(u),\phi\right\rangle
=
\lim_{n\to\infty}
\left\langle I'_{b_n}(\tau_{c_n,A_n}u_n),\phi\right\rangle
=0,
\end{equation*}
which thus proves \eqref{solution}.

The rest is to prove \eqref{lowercon}.
 In fact, it follows from \eqref{functional} and \eqref{3.02} that
 \begin{equation*}
 	I_{b_n}(u_n)=\f14\iint_{\R\times\R}\f{|u_n(x)|^2|u_n(y)|^2}{|x-y|}dxdy+\f12\left<I'_{b_n}(u_n),u_n\right>,\ \ \forall\, n\in \mathbb{N}^+.
 \end{equation*}
Since $I'_{b_n}(u_n)= 0$ holds for all $n\in \mathbb{N}^+$, it follows from \eqref{bound} that
 \begin{equation}\label{A}
 	\begin{aligned}
 	\liminf_{n\to \infty}I_{b_n}(u_n)=&\f14 \liminf_{n\to \infty}\iint_{\R\times\R}\f{|u_n(x)|^2|u_n(y)|^2}{|x-y|}dxdy\\
    =&\f14\liminf_{n\to \infty}\iint_{\R\times\R}\f{|\tau_{c_n,A_n}u_n(x)|^2|\tau_{c_n,A_n}u_n(y)|^2}{|x-y|}dxdy\\
    \geq&\f14\iint_{\R\times\R}\f{|u(x)|^2|u(y)|^2}{|x-y|}dxdy,
     \end{aligned}
 \end{equation}
 where   Fatou's lemma is used for the inequality.
Since $I_0'(u)=0$, we obtain from \eqref{A} and above that
\begin{equation*}
I_{0}(u)=\f14\iint_{\R\times\R}\f{|u(x)|^2|u(y)|^2}{|x-y|}dxdy+\f12\left<I'_{0}(u),u\right>\leq\liminf_{n\to \infty}I_{b_n}(u_n),
\end{equation*}
which proves \eqref{lowercon}. Therefore, it completes the proof of Lemma \ref{L3.1}. \qed

\vspace{5pt}

Applying Lemma \ref{L3.1}, we now address the proof of Proposition \ref{P3.1}.

\vspace{5pt}

\noindent{\bf Proof of Proposition \ref{P3.1}.}
Since $C_c^1(\R,\mathbb{C})$ is dense (cf. \cite[Theorem 7.22]{Lieb01}) in $H_A^1(\R,\mathbb{C})$, we deduce from Proposition \ref{ex} that
\begin{equation}\label{2.33M}
	e(b)=\f14\inf_{v\in  C_c^1(\R,\mathbb{C})\setminus \{0\}}Q_A(v)^2.
\end{equation}
For any $\eta>0$, there exists $v_\eta\in C_c^1(\R,\mathbb C)\setminus\{0\}$ such that
\begin{equation*}
\frac14 Q_0(v_\eta)^2\le e(0)+\eta.
\end{equation*}
The variational characterization of (\ref{2.33M}) thus yields that
\begin{equation*}
e(b)\le \frac14 Q_{A}(v_\eta)^2.
\end{equation*}
Since the map $b\mapsto Q_{A}(v)$ is continuous for any fixed $v\in C_c^1(\R,\mathbb C)\setminus\{0\}$, we have
\begin{equation*}
Q_{A}(v_\eta)\to Q_0(v_\eta) \text{ \ as \ }b\to0,
\end{equation*}
which then gives that
\begin{equation*}
\limsup_{b\to0}e(b)
\le
\frac14Q_0(v_\eta)^2
\le e(0)+\eta.
\end{equation*}
Since $\eta>0$ is arbitrary, we further obtain from above that
\begin{equation*}
\limsup_{b\to0}e(b)\le e(0).
\end{equation*}
This implies that $e(b)$ is upper semicontinuous at $b=0$.

We now claim that $e(b)$ is lower semicontinuous at $b=0$. By Lemma \ref{L3.1}, there exist a subsequence, still denoted by $\{u_n\}$, of $\{u_n\}$, a sequence $\{c_n\}\subset \R$ and $0\not\equiv u\in H^1(\R,\mathbb{C})$ satisfying $I'_0(u)=0$ such that
\begin{equation}\label{2.34M}
	\tau_{c_n,A_n}u_n\rightharpoonup u\ \text{ and }\ \nabla_{A_n}(\tau_{c_n,A_n}u_n)\rightharpoonup \nabla  u\ \, \text{weakly in}\ L^2(\R) \,\ \text{as }\ n \to \infty,
\end{equation}
and
\begin{equation*}
    \liminf_{n\to \infty}I_{b_n}(u_n)\geq I_0(u).
\end{equation*}
Since $I_0(u)\geq e(0)$, we have
\begin{equation}\label{eE}
	\liminf_{n\to \infty}e(b_n)=\liminf_{n\to\infty}I_{b_n}(u_n)\geq I_0(u)\geq e(0).
\end{equation}
This then proves the claim that $e(b)$ is lower semicontinuous at $b=0$. Hence, $e(b)$  is continuous at $b=0$, and $I_0(u)=e(0)$. Proposition \ref{P3.1}(1) is thus proved.

We finally prove \eqref{strongcon}. Since $e(b)$ is continuous at $b=0$, we have $\lim_{n\to \infty}I_{b_n}(u_n)=\lim_{n\to \infty}e(b_n)=e(0)=I_0(u)$. Since
\begin{equation*}
	 \f14\int_{\R}\big(|\nabla_{A_n}(\tau_{c_n,A_n}u_n)|^2+|\tau_{c_n,A_n}u_n|^2\big)dx=I_{b_n}(u_n)-\f14\left<I'_{b_n}(u_n),u_n\right>,
\end{equation*}
we obtain from \eqref{2.155} that
\begin{equation*}
	\begin{aligned}
	 &\f14\lim_{n\to\infty}\int_{\R}\big(|\nabla_{A_n}(\tau_{c_n,A_n}u_n)|^2+|\tau_{c_n,A_n}u_n|^2\big)dx\\
    =&\lim_{n\to\infty}I_{b_n}(u_n)=I_0(u)=\f14\iint_{\R\times\R}\f{|u(x)|^2|u(y)|^2}{|x-y|}dxdy\\
    =&\f14\int_{\R}\big(|\nabla u|^2+|u|^2\big)dx,
	\end{aligned}
\end{equation*}
where we have used  $I_0'(u)=0$ in the last identity.
This thus proves \eqref{strongcon} in view of (\ref{2.34M}). It therefore completes the proof of Proposition \ref{P3.1}(2), and we are done. \qed

\section{Uniqueness and Nondegeneracy of Ground States}\label{S3}
In this section, we first derive some estimates of ground states for  \eqref{equ} as $b\to 0$. In Subsection 3.1, we then prove Theorem \ref{T1.1} on the uniqueness, up to magnetic translations $\tau_{a,A}$ and rotations in $\mathbb{C}$, and nondegeneracy  of ground states for  \eqref{equ} as $b\to 0$. We start with the following lemma.

\begin{Lemma}\label{u1} Suppose $u_n$ is a ground state of \eqref{equ} with $A=A_n=\f{b_n}{2}(-x_2,x_1,0)$, where $b_n\to0$ as $n\to\infty$, and let $u_0=u_0(|x|)>0$ be the unique positive ground state of   \eqref{w}. Then there exist two sequences $\{\widetilde\theta_n\} \subset \mathbb{R}$ and $\{\widetilde a_n\} \subset \R$ such that
\begin{equation}\label{u11}
\lim_{n\to\infty}\int_{\R}\big(|\nabla_{A_n}(e^{i\widetilde\theta_n}\tau_{\widetilde a_n,A_n}u_n)-\nabla u_0|^2+|e^{i\widetilde \theta_n}\tau_{\widetilde a_n,A_n}u_n-u_0|^2\big)dx=0.
\end{equation}
Moreover,  the sequences $\{\widetilde\theta_n\}$ and $\{\widetilde a_n\}$ can be chosen so that for sufficiently large $n\in \mathbb{N}^+$,
\begin{equation}\label{u3}
\begin{aligned}
&\int_{\R}\Big\{\big[\nabla_{A_n}(e^{i\widetilde\theta_n}\tau_{\widetilde a_n,A_n}u_n)|\nabla(\nabla u_0\cdot y)\big]\\
&\qquad\ +\big[e^{i\widetilde \theta_n}\tau_{\widetilde a_n,A_n}u_n|\nabla u_0\cdot y\big]\Big\}dx=0 \  \ \forall \,  y\in\R,\\
&\int_{\R}\Big\{\big[\nabla_{A_n}(e^{i\widetilde \theta_n}\tau_{\widetilde a_n,A_n}u_n)|\nabla iu_0\big]+(e^{i\widetilde \theta_n}\tau_{\widetilde a_n,A_n}u_n|iu_0)\Big\}dx=0.
\end{aligned}
\end{equation}
\end{Lemma}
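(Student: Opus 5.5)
The plan has two parts. First I obtain strong convergence to $u_0$. Then I fix the parameters by a finite-dimensional implicit-function (or minimization) argument.

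\emph{Step 1 (convergence to $u_0$).} By Proposition \ref{P3.1}(2), along any subsequence there are $c_n\in\R$ and a ground state $u$ of \eqref{w1} such that $\tau_{c_n,A_n}u_n\to u$ and $\nabla_{A_n}(\tau_{c_n,A_n}u_n)\to\nabla u$ in $L^2$. Remark \ref{remark:2.1} gives $u=e^{i\theta}u_0(\cdot-y)$. I absorb $y$ and $\theta$ using the composition rule \eqref{t}. Since $\tau_{-y,A_n}$ is an isometry commuting with $\nabla_{A_n}$, and since $\tau_{-y,A_n}\phi\to\phi(\cdot+y)$ in $H^1$ for fixed $\phi$ (the phase $e^{iA_n(y)\cdot x}$ has gradient $O(b_n)$), the function $e^{-i\theta}\tau_{-y,A_n}\tau_{c_n,A_n}u_n$ equals $e^{i\widetilde\theta_n}\tau_{\widetilde a_n,A_n}u_n$ with $\widetilde a_n=c_n-y$ and converges to $u_0$ in the sense of \eqref{u11}. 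The usual subsequence-of-subsequence argument then upgrades this to the whole sequence, because the limit $u_0$ is unique.

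\emph{Step 2 (orthogonality).} Write $w_n:=e^{i\widetilde\theta_n}\tau_{\widetilde a_n,A_n}u_n$ from Step 1. I look for small corrections $(\sigma,z)\in\mathbb R\times\R$ and replace $w_n$ by
\[
w_n^{\sigma,z}:=e^{i\sigma}\tau_{z,A_n}w_n ,
\]
which again has the form $e^{i\theta}\tau_{a,A_n}u_n$ by \eqref{t}. Define $F_n:\mathbb R^4\to\mathbb R^4$ by taking the left-hand sides of \eqref{u3}, evaluated on $w_n^{\sigma,z}$, with $y=e_1,e_2,e_3$ together with the phase condition. Here the pairing is $\langle f,g\rangle_n=\int[(\nabla_{A_n}f|\nabla g)+(f|g)]dx$.

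Now I compare $F_n$ with a limit map. Set
\[
F(\sigma,z):=\big\langle e^{i\sigma}u_0(\cdot-z),\,(\partial_ju_0,\ iu_0)\big\rangle_{H^1}.
\]
By Step 1, and because $z\mapsto\tau_{z,A_n}$ is locally uniformly close to ordinary translation as $b_n\to0$, we get $F_n\to F$ in $C^1$ near $0$. It would be cleaner to move $\tau_{z,A_n}$ onto the fixed test functions, using that $\tau$ is unitary with adjoint $\tau_{-z,A_n}$ up to phase.

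Next I check the limit map is nondegenerate at $0$. We have $F(0)=0$: $u_0$ is radial and real, so $\langle u_0,\partial_ju_0\rangle=0$ by oddness and $\langle u_0,iu_0\rangle=0$ since the real inner product of a real function with an imaginary one vanishes. The differential $DF(0)$ has entries $-\langle\partial_ku_0,\partial_ju_0\rangle$ and $\langle iu_0,iu_0\rangle$, with the off-diagonal entries zero. It is therefore diagonal with entries $-\|\partial_ju_0\|_{H^1}^2$ and $\|u_0\|_{H^1}^2$, all nonzero, so $DF(0)$ is invertible.

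A quantitative inverse function theorem (or Brouwer degree) then gives zeros $(\sigma_n,z_n)\to0$ of $F_n$ for large $n$. Replacing $\widetilde\theta_n$ and $\widetilde a_n$ accordingly yields \eqref{u3} for every $y\in\R$, since each condition is linear in $y$. Convergence \eqref{u11} persists because the corrections tend to $0$.

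\emph{Main obstacle.} The delicate point is the uniform $C^1$ convergence $F_n\to F$. Magnetic translations introduce the phase $e^{-iA_n(z)\cdot x}$, which is not uniformly small in $x$. It becomes harmless after pairing with the fixed, exponentially decaying test functions $\partial_ju_0$, $iu_0$ and their gradients, since $|A_n(z)\cdot x|\le C|b_n||z||x|$. I will verify this with dominated convergence, using the decay of $u_0$.
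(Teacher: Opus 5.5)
Your proposal is correct and follows essentially the same route as the paper. You use Proposition~\ref{P3.1}, Remark~\ref{remark:2.1} and the composition rule \eqref{t} to normalize the limit to $u_0$, then a four-parameter phase/magnetic-translation correction whose limiting map has a diagonal, invertible Jacobian at the origin; your convention $\tau_{z,A_n}$ only flips the sign of the translation entries, so the degree is $-1$ rather than $1$. The paper needs only locally uniform convergence $\psi_n\to\psi$ together with the Brouwer degree, so the $C^1$ convergence you flag as the main obstacle is required only if you choose the inverse function theorem.
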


\noindent{\bf Proof.} We first claim that there exist two sequences $\{\theta_n\} \subset \mathbb{R}$ and $\{a_n\} \subset \R$ such that
\begin{equation}\label{u12}
\lim_{n\to\infty}\int_{\R}\big(|\nabla_{A_n}(e^{i\theta_n}\tau_{a_n,A_n}u_n)-\nabla u_0|^2+|e^{i\theta_n}\tau_{a_n,A_n}u_n-u_0|^2\big)dx=0.
\end{equation}
Indeed, it follows from Proposition \ref{P3.1} that there exist a subsequence, still denoted by $\{u_n\}$, and a sequence $\{c_n\}\subset\R$ such that
\begin{equation*}
\tau_{c_n,A_n}u_n\to \tilde u\text{ \ and \ }\nabla_{A_n}(\tau_{c_n,A_n}u_n)\to \nabla\tilde u\text{ \ strongly in\ }L^2(\R)\ \text{\  as\ }\ n\to \infty,
\end{equation*}
where   $\tilde{u}\in H^1(\R,\mathbb{C})$ is a ground state of   \eqref{w}.
Recall from Remark \ref{remark:2.1} that there exist $c\in\R$ and $k\in\mathbb{R}$ such that $\tilde{u}=e^{ik}\tau_{c,0}u_0$. We then derive from \eqref{t} that
\begin{equation}\label{4.2}
	\begin{split}
	o(1)=&\int_{\R}|\nabla_{A_n}(\tau_{c_n,A_n}u_n)-\nabla \tilde{u}|^2dx\\
    =&\int_{\R}|\nabla_{A_n}(\tau_{c_n,A_n}u_n)-e^{ik}\tau_{c,0}\nabla u_0|^2dx\\=&\int_{\R}|e^{-ik}\tau_{-c,A_n}\nabla_{A_n}(\tau_{c_n,A_n}u_n)-\tau_{-c,A_n}\tau_{c,0}\nabla u_0|^2dx\\=&\int_{\R}|e^{-ik}e^{-iA_n(c_n)\cdot c}\nabla_{A_n}(\tau_{c_n-c,A_n}u_n)-\tau_{-c,A_n}\tau_{c,0}\nabla u_0|^2dx\text{ \ as \ }n\to\infty.
	\end{split}
\end{equation}
On the other hand, by Lebesgue's dominated convergence theorem, we have
\begin{equation}\label{4.3}
\int_{\R}|\nabla u_0-\tau_{-c,A_n}\tau_{c,0}\nabla u_0|^2dx\to 0\text{ \ as}\,\ n \to \infty.
\end{equation}
We then obtain from \eqref{4.2} and \eqref{4.3} that
\begin{equation}\label{4.4}
\int_{\R}|e^{-ik}e^{-iA_n(c_n) \cdot c}\,\nabla_{A_n}(\tau_{c_n-c,A_n}u_n)-\nabla u_0|^2dx\to 0 \text{ \ as\ }\, n \to \infty.
\end{equation}
Similar to \eqref{4.4}, we also have
\begin{equation}\label{4.5}
	\int_{\R}|e^{-i[k+A_n(c_n)\cdot c]}\tau_{c_n-c,A_n}u_n-u_0|^2dx\to 0 \text{ \ as \ }  n \to \infty.
\end{equation}
Setting $\theta_n=-k-A_n(c_n)\cdot c$ and $a_n=c_n-c$, it then follows from above that \eqref{u12} holds for this subsequence. Since $u_0>0$ is unique, one can further verify that the claim \eqref{u12} essentially holds true for the whole sequence $\{u_n\}$.

Define the map $\psi_n\in C(\mathbb{R}^4,\mathbb{R}^4)$ by
\begin{equation*}
\begin{aligned}
&\psi_n(y,t)=\Big(\psi_n^{(1)}(y,t),\psi_n^{(2)}(y,t),\psi_n^{(3)}(y,t),\psi_n^{(4)}(y,t)\Big),
\end{aligned}
\end{equation*}
where
\begin{equation*}
\begin{aligned}
\psi_n^{(j)}(y,t)=&\int_{\R}\Big[\big(\nabla_{A_n}(e^{i({\theta}_n+t)}\tau_{-y,A_n}\tau_{{a}_n,A_n}u_n)|\nabla (\p_{x_j}u_0)\big)\\
&+\big(e^{i({\theta}_n+t)}\tau_{-y,A_n}\tau_{{a}_n,A_n}u_n|(\p_{x_j}u_0)\big)\Big]dx, \,\ j=1,2,3,
\end{aligned}
\end{equation*}
and
\begin{equation*}
\begin{aligned}
\psi_n^{(4)}(y,t)=\int_{\R}\Big[\big(\nabla_{A_n}(e^{i({\theta}_n+t)}\tau_{-y,A_n}\tau_{{a}_n,A_n}u_n) | \nabla iu_0\big)+\big((e^{i({\theta}_n+t)}\tau_{-y,A_n}\tau_{{a}_n,A_n}u_n)|iu_0\big)\Big]dx.
\end{aligned}
\end{equation*}
Here $a_n\in\R$ and $\theta_n\in\mathbb{R}$ are as in \eqref{u12}.
Since $\nabla_{A_n}\circ(\tau_{y,A_n}\tau_{{a}_n,A_n})=(\tau_{y,A_n}\tau_{{a}_n,A_n})\circ \nabla_{A_n}$, we then deduce from \eqref{u12} that  $\psi_n \to \psi$ strongly in $L^\infty_{\text{loc}}(\mathbb{R}^4)$ as $n\to\infty$, where the function $\psi\in C(\mathbb{R}^4,\mathbb{R}^4)$ satisfies
\begin{equation}\label{3.7A}
\psi(y,t)=\Big(\psi^{(1)}(y,t),\psi^{(2)}(y,t),\psi^{(3)}(y,t),\psi^{(4)}(y,t)\Big),
\end{equation}
together with
\begin{equation*}
\begin{aligned}
\psi^{(j)}(y,t)=&\int_{\R}\Big[\big(\nabla (e^{it}\tau_{-y,0}u_0)|\nabla (\p_{x_j}u_0)\big)+\big(e^{it}\tau_{-y,0}u_0| (\p_{x_j}u_0)\big)\Big]dx,\,\ j=1,2,3,
\end{aligned}
\end{equation*}
and
\begin{equation*}
\begin{aligned}
\psi^{(4)}(y,t)=\int_{\R}\Big[\big(\nabla(e^{it}\tau_{-y,0}u_0) | \nabla (iu_0)\big)+\big((e^{it}\tau_{-y,0}u_0)|iu_0\big)\Big]dx.
\end{aligned}
\end{equation*}

We derive from \eqref{14} and (\ref{3.7A}) that
\begin{equation}\label{3.6}
\psi(0,0)=(0,0,0,0).
\end{equation}
Moreover, direct calculations yield that
\begin{equation*}
\frac{\partial \psi^{(j)}}{\partial y_k}(0,0)=\int_{\R}\Big[\big(\nabla(\partial_{x_k}u_0)|\nabla(\partial_{x_j}u_0)\big)+\big(\partial_{x_k}u_0|\partial_{x_j}u_0\big)
\Big]dx,\ \ j,k=1,2,3;
\end{equation*}
\begin{equation*}
\frac{\partial \psi^{(4)}}{\partial y_k}(0,0)=0,\ \ k=1,2,3;\ \
\frac{\partial \psi^{(4)}}{\partial t}(0,0)=\int_{\R}\Big(|\nabla u_0|^2+|u_0|^2\Big)dx>0;
\end{equation*}
and
\begin{equation*}
\frac{\partial\psi^{(j)}}{\partial t}(0,0)=0,\ \ j=1,2,3.
\end{equation*}
Since $u_0>0$ is radially symmetric, the matrix
\begin{equation*}
D\psi(0,0)
\end{equation*}
is diagonal with  positive diagonal entries. This then implies that for sufficiently small $\eta>0$, the Brouwer degree
$\deg(\psi,B_\eta,0)$ is well-defined and
\begin{equation*}
\deg(\psi,B_\eta,0)=1.
\end{equation*}
Since $\psi_n \to \psi$ strongly in $L^\infty_{\text{loc}}(\mathbb{R}^4,\mathbb{R}^4)$ as $n\to\infty$, we obtain that deg($\psi_n,B_{\eta},0$)=1 holds for sufficiently large $n\in\N^+$. Thus, there exists a sequence $\{(y_n,t_n)\}\subset \R \times\mathbb{R} $, where  $(y_n,t_n)\to (0,0)$ as $n\to \infty$, such that
\begin{equation}\label{3.10A}
    \psi_{n}(y_n,t_n)=(0,0,0,0) \ \text{ holds for sufficiently large }\ n\in\N.
\end{equation}
 Setting $\widetilde a_n=a_n-y_n$ and $\widetilde \theta_n={\theta}_n+t_n-A_n({a}_n)\cdot y_n$, we then conclude that \eqref{u3} holds true. Applying again Lebesgue's dominated convergence theorem, we finally obtain that \eqref{u11} also holds true. This therefore completes the proof of Lemma \ref{u1}. \qed

\vspace{5pt}
To address the  uniqueness of Theorem \ref{T1.1}, by contradiction, we now suppose that for some sequence $\{b_n\}$ satisfying $b_n\to0$ as $n\to\infty$, there exist two different ground states $u_n$ and $v_n$ of \eqref{equ} with $A=A_n=\f{b_n}{2}(-x_2,x_1,0)$, which are not related by magnetic translations and constant phase factors.
We then deduce from Lemma \ref{u1} that there exist the sequences $\{\widetilde\theta_n^u, \widetilde a_n^u\}  $ and $\{\widetilde\theta_n^v, \widetilde a_n^v\}$ such that
\begin{equation}\label{tuv}
\tilde{u}_n:=e^{i\widetilde\theta_n^u}\tau_{\widetilde a_n^u,A_n}u_n \ \text{ and }  \ \tilde{v}_n:=e^{i\widetilde\theta_n^v}\tau_{\widetilde a_n^v,A_n}v_n
\end{equation}
satisfy \eqref{u11} and \eqref{u3}.
Since $\tilde u_n$ and $\tilde v_n$  are also solutions of \eqref{equ} with $A=A_n=\frac{b_n}{2}(-x_2,x_1,0)$, we obtain that
\begin{equation}\label{3.11}
-\Delta_{A_n}(\tilde u_n-\tilde v_n)+(\tilde u_n-\tilde v_n)=f(\tilde u_n)-f(\tilde v_n)\text{ \ in \ }\R,
\end{equation}
where $f(u):=u(|x|^{-1}*|u|^2)$. Note that
\begin{equation*}
\begin{aligned}
 f(\tilde u_n)-f(\tilde v_n)
=&\int_{0}^1\frac{d}{dt}f\big((1-t)\tilde{v}_n+t\tilde u_n\big)dt\\
=&\int_{0}^1Df\big((1-t)\tilde{v}_n+t\tilde u_n\big)[\tilde u_n-\tilde v_n]dt,
\end{aligned}
\end{equation*}
and denote the nonlocal operator
\begin{equation}\label{3.12M}
W_n:=\int_{0}^1Df\big((1-t)\tilde{v}_n+t\tilde u_n\big)dt,
\end{equation}
so that \eqref{3.11} can be rewritten as
\begin{equation}\label{3.12}
-\Delta_{A_n}(\tilde u_n-\tilde v_n)+(\tilde u_n-\tilde v_n)=W_n[\tilde u_n-\tilde v_n]\ \text{ \ in \ }\R.
\end{equation}
We next define the operator $L_n:H_{A_n}^1(\R,\mathbb{C})\to H_{A_n}^1(\R,\mathbb{C})$ by
\begin{equation}\label{Ln}
\begin{aligned}
L_nw:=&(-\Delta_{A_n}+1)^{-1}W_n[w]\\
=&(-\Delta_{A_n}+1)^{-1}\int_0^1\Big\{\big[|x|^{-1}*|t\tilde{u}_n+(1-t)\tilde{v}_n|^2\big]w\\&
\qquad +2\big[t\tilde{u}_n+(1-t)\tilde{v}_n\big]\Big[|x|^{-1}*\big((t\tilde{u}_n+(1-t)\tilde{v}_n)|w\big)\Big]\Big\}dt,
\end{aligned}
\end{equation}
where we have used the fact that
\begin{equation*}
\begin{aligned}
Df(u)[w]=w(|x|^{-1}*|u|^2) + 2u\big( |x|^{-1}*( u |w)\big).
\end{aligned}
\end{equation*}

The following lemma is concerned with some properties of the operator $L_n$ in $H_{A_n}^1(\R,\mathbb{C})$.

\begin{Lemma}\label{LLn}
The operator $L_n$ defined by \eqref{Ln} is self-adjoint and compact in $H_{A_n}^1(\R,\mathbb{C})$ for all $n\in\N^+$.
\end{Lemma}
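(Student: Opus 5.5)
The plan is to identify $L_n$ through the bilinear form it induces on $H_{A_n}^1(\R,\mathbb C)$. Self-adjointness and compactness can then both be read off from properties of that form. Write $z_t:=t\tilde u_n+(1-t)\tilde v_n$. By definition of $(-\Delta_{A_n}+1)^{-1}$, for all $w,\varphi\in H_{A_n}^1(\R,\mathbb C)$ we have
\begin{equation*}
(L_nw|\varphi)_{H_{A_n}^1}=\int_0^1\Big\{\int_{\R}\big(|x|^{-1}*|z_t|^2\big)(w|\varphi)dx+2\iint_{\R\times\R}\frac{(z_t(x)|\varphi(x))\,(z_t(y)|w(y))}{|x-y|}dxdy\Big\}dt .
\end{equation*}
This uses the real inner product $(\cdot|\cdot)_{H_{A_n}^1}$, so $H_{A_n}^1$ is regarded as a real Hilbert space; this is natural because $W_n$ is only real-linear. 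The right-hand side is visibly symmetric in $(w,\varphi)$: the first term because $(w|\varphi)=(\varphi|w)$, and the second because the kernel $|x-y|^{-1}$ is symmetric.

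First I check that $L_n$ is well defined and bounded. By the diamagnetic inequality \eqref{dmi} and Sobolev embedding, $z_t,w\in L^p$ for $2\le p\le6$, with bounds uniform in $t\in[0,1]$. The HLS inequality \eqref{hls} gives $|x|^{-1}*|z_t|^2\in L^\infty$: for instance, split the kernel as $|x|^{-1}\mathds 1_{|x|<1}\in L^{3/2-}$ and $|x|^{-1}\mathds 1_{|x|\ge1}\in L^{3+}$ and apply Young's inequality with $|z_t|^2\in L^1\cap L^3$. Hence the first term is bounded by $C\|w\|_2\|\varphi\|_2$. The second term is bounded by $C\|z_t\varphi\|_{6/5}\|z_tw\|_{6/5}$ by \eqref{HLSI} with $p=q=6/5$, and $\|z_tw\|_{6/5}\le\|z_t\|_{12/5}\|w\|_{12/5}$. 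So $W_n[w]$ defines a bounded functional on $H_{A_n}^1$ and, by Riesz, $L_n$ is a bounded symmetric, hence self-adjoint, operator.

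For compactness, let $w_k\rightharpoonup0$ weakly in $H_{A_n}^1$; I will show $\|L_nw_k\|_{H_{A_n}^1}=\sup_{\|\varphi\|\le1}(L_nw_k|\varphi)\to0$. The obstacle is that $\R$ is unbounded, so $w_k$ need not converge strongly in any global $L^p$; only $|w_k|$ is bounded in $H^1$, and $w_k\to0$ in $L^p_{\loc}$ for $p<6$. This local convergence holds because $A_n$ is locally bounded, so $H_{A_n}^1\subset H^1_{\loc}$ with Rellich. The decay of the fixed functions $z_t$ and of the potential $V_t:=|x|^{-1}*|z_t|^2$ has to supply the compactness.

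For the second term, $\|z_tw_k\|_{6/5}\to0$. Indeed, split $\R=B_R\cup B_R^c$. On $B_R$ use local strong convergence. On $B_R^c$ use $\|z_t\|_{L^{12/5}(B_R^c)}\|w_k\|_{12/5}$, which is small uniformly in $k$ and in $t$, since $\{z_t\}$ is a compact family in $L^{12/5}$ (continuous in $t$). For the first term, $V_t$ is bounded but only decays like $|x|^{-1}$, so I would instead use that $V_t\in L^{r}$ for every $r>3$ (from \eqref{hls}, with $|z_t|^2\in L^p$, $1<p<3/2$, and $\frac1r=\frac1p-\frac23$). Then
\begin{equation*}
\Big|\int V_t(w_k|\varphi)\Big|\le\|V_t\,w_k\|_{2}\|\varphi\|_2 .
\end{equation*}
On $B_R$ the factor $\|V_t w_k\|_2$ tends to zero by local strong convergence. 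Outside $B_R$ it is bounded by $\|V_t\|_{L^{r}(B_R^c)}\|w_k\|_{s}$ with $\frac12=\frac1r+\frac1s$ and $s\in(2,6)$ ($r\in(3,\infty)$), which is small for $R$ large, uniformly in $k$ and $t$.

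Integrating in $t\in[0,1]$ with uniform bounds gives $\|L_nw_k\|\to0$, so $L_n$ maps weakly null sequences to strongly null ones and is therefore compact. I expect the main obstacle to be this tail control of the local term via $V_t\in L^r$, together with verifying the local compact embedding for $H_{A_n}^1$.
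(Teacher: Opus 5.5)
Your proof is correct and uses the same ingredients as the paper's: HLS bounds, Rellich compactness on $B_R$ via $H_{A_n}^1\subset H^1_{\mathrm{loc}}$, and uniform-in-$t$ smallness of $|x|^{-1}*|z_t|^2$ in $L^r(B_R^c)$ and of $z_t$ in $L^{12/5}(B_R^c)$. The only difference is the last step: you bound $\|L_nw_k\|_{H_{A_n}^1}$ by duality for weakly null $w_k$, whereas the paper passes to a weak limit $w$ of $L_nv_k$ and upgrades to strong convergence via the identity $\|L_nv_k\|_{H_{A_n}^1}^2=\int_{\R}(L_nv_k|W_n[v_k])\,dx$ together with \eqref{Wto}.

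One small slip: to get $|x|^{-1}*|z_t|^2\in L^\infty$, the near part of the kernel must be taken in $L^q$ with $3/2\le q<3$ (e.g.\ $L^{3/2}$ against $|z_t|^2\in L^3$), since $q<3/2$ would need $|z_t|^2\in L^{q'}$ with $q'>3$. You could also avoid the $L^\infty$ bound entirely by using $\|V_t\|_{6}$, as the paper does.
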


\noindent{\bf Proof.} Since the self-adjointness of $L_n$ in $H_{A_n}^1(\R,\mathbb{C})$ is trivial, it suffices to prove the compactness of $L_n$ in $H_{A_n}^1(\R,\mathbb{C})$. Suppose the sequence $\{v_k\}$ is bounded uniformly in $H_{A_n}^1(\R,\mathbb{C})$ for any fixed $n\in \N^+$. Then there exists a function $v\in H_{A_n}^1(\R,\mathbb{C})$ such that up to a subsequence if necessary, $v_k\rightharpoonup v$ weakly in $H_{A_n}^1(\R,\mathbb{C})$ as $k\to \infty$. Define $w_k:=L_nv_k\in H_{A_n}^1(\R,\mathbb{C})$, so that $w_k$ satisfies the equation
\begin{equation}\label{wk}
    -\Delta_{A_n}w_k+w_k=W_n[v_k]\ \text{ \ in \ }\R,
\end{equation}
where the operator $W_n$ is defined by (\ref{3.12M}).
Applying Hardy-Littlewood-Sobolev inequality \eqref{hls}, we deduce from \eqref{tuv} that for all $t\in[0,1]$ and $n\in\N^+$,
\begin{equation}\label{aaa1}
\begin{aligned}
  \||x|^{-1}*|t\tilde{u}_n+(1-t)\tilde{v}_n|^2\|_{6}
   &\leq C\||t\tilde{u}_n+(1-t)\tilde{v}_n|^2\|_{\f65}\\
   &\leq C_1\left(t^2\|\tilde{u}_n\|^2_{\f{12}{5}}+(1-t)^2\|\tilde{v}_n\|^2_{\f{12}{5}}\right),
\end{aligned}
\end{equation}
and
\begin{equation}\label{aa2}
\begin{aligned}
   \||x|^{-1}*\big((t\tilde{u}_n+(1-t)\tilde{v}_n)|v_k\big)\|_{6}
     \leq& C\|\big((t\tilde{u}_n+(1-t)\tilde{v}_n)|v_k\big)\|_{\f{6}{5}}\\
     \leq& C_1 \|t\tilde{u}_n+(1-t)\tilde{v}_n\|_{\f{12}{5}}\|v_k\|_{\f{12}{5}}\\
     \leq& C_2\left(t\|\tilde{u}_n\|_{\f{12}{5}}+(1-t)\|\tilde{v}_n\|_{\f{12}{5}}\right)\|v_k\|_{\f{12}{5}}.
\end{aligned}
\end{equation}
Using Sobolev embedding theorem and H\"older's inequality, we then obtain from \eqref{wk}--\eqref{aa2} that
\begin{equation}\label{bou}
\begin{aligned}
  &\int_{\R}\big(|\nabla_{A_n}w_k|^2+|w_k|^2\big)dx
  =\int_{\R}(w_k|W_n[v_k])dx\\
  =&\int_0^1\int_{\R}\big(|x|^{-1}*|t\tilde{u}_n+(1-t)\tilde{v}_n|^2\big)(w_k|v_k)dxdt\\
  &+2\int_0^1\int_{\R}\big[t\tilde{u}_n+(1-t)\tilde{v}_n|w_k\big]\big[|x|^{-1}*(t\tilde{u}_n+(1-t)\tilde{v}_n|v_k)\big]dxdt\\
  \leq&\int_0^1\||x|^{-1}*|t\tilde{u}_n+(1-t)\tilde{v}_n|^2\|_6\|w_k\|_{\f{12}{5}}\|v_k\|_{\f{12}{5}}dt\\
  &+2\int_0^1\||x|^{-1}*\big((t\tilde{u}_n+(1-t)\tilde{v}_n)|v_k\big)\|_6\|w_k\|_{\f{12}{5}}\|t\tilde{u}_n+(1-t)\tilde{v}_n\|_{\f{12}{5}}dt\\
  \leq& C_1\int_0^1\left(t^2\|\tilde{u}_n\|^2_{\f{12}{5}}+(1-t)^2\|\tilde{v}_n\|^2_{\f{12}{5}}\right)\|w_k\|_{\f{12}{5}}\|v_k\|_{\f{12}{5}}dt\\
  &+C_2\int_0^1\left(t\|\tilde{u}_n\|_{\f{12}{5}}+(1-t)\|\tilde{v}_n\|_{\f{12}{5}}\right)^2\|v_k\|_{\f{12}{5}}\|w_k\|_{\f{12}{5}}dt
  \\\leq& C_3\|w_k\|_{\f{12}{5}}\leq C_4\|w_k\|_{H_{A_n}^1}.
\end{aligned}
\end{equation}
This implies that the sequence $\{w_k\}$ is bounded uniformly in $H_{A_n}^1(\R,\mathbb{C})$, and hence there exists a function  $w\in H_{A_n}^1(\R,\mathbb{C})$ such that up to a subsequence if necessary, $w_k \rightharpoonup w$ weakly in $H_{A_n}^1(\R,\mathbb{C})$ as $k\to\infty$.

We next claim that for any $n\in\N^+$,
\begin{equation}\label{Wto}
    \int_{\R}(w_k|W_n[v_k])dx\to\int_{\R}(w|W_n[v])dx\text{ \ as \ }k\to\infty.
\end{equation}
Actually, we note that
\begin{equation}\label{ww1}
\begin{split}
&\int_{\R}(w_k|W_n[v_k])dx\\
=&\int_0^1\int_{\R}(|x|^{-1}*|h_n(t)|^2)(w_k|v_k)dxdt\\
&+2\int_0^1\int_{\R}(h_n(t)|w_k)\big[|x|^{-1}*(h_n(t)|v_k)\big]dxdt\\
:=&I_k^{1}+I_k^{2},
\end{split}
\end{equation}
where
\begin{equation}\label{hn1}
h_n(t):=t\tilde{u}_n+(1-t)\tilde{v}_n\text{ \ for \ }t\in[0,1].
\end{equation}
We first address the sequence $\{I_k^1\}$ as follows. Since it follows from above that $v_k\to v$ strongly in $L^{\f{12}{5}}_{\loc}(\R)$ and $w_k\to w$ strongly in $L^{\f{12}{5}}_{\loc}(\R)$ as $k\to\infty$, we deduce from H\"older's inequality that for every fixed $R>0$,
\begin{equation}\label{w2}
\begin{split}
  &\left|\int_0^1\int_{B_R}\Big[(|x|^{-1}*|h_n(t)|^2)(w_k|v_k)-(|x|^{-1}*|h_n(t)|^2)(w|v)\Big]dxdt\right|\\
  \leq &C\int_0^1\Big[\||x|^{-1}*|h_n(t)|^2\|_{L^6(B_R)}\big(\|v_k-v\|_{L^{\f{12}{5}}(B_R)}\|w_k\|_{L^{\f{12}{5}}(B_R)}\\
  &+\|v\|_{L^{\f{12}{5}}(B_R)}\|w_k-w\|_{L^{\f{12}{5}}(B_R)}\big)\Big]dt\
  \to 0 \text{ \ as }\ k \to \infty.
\end{split}
\end{equation}

Since $|\tilde{u}_n|\in H^1(\R)$, we also have
\begin{equation*}
    \||x|^{-1}*|\tilde{u}_n|^2\|_6\leq C\|\tilde{u}_n\|^2_{\f{12}{5}}<\infty  \  \text{ for every } \ n\in \N^+.
\end{equation*}
By the absolute continuity, we then obtain that for every $\eta>0$, there exists a sufficiently large constant $R_0>0$ such that for all $R>R_0$,
\begin{equation}\label{un1}
    \||x|^{-1}*|\tilde{u}_n|^2\|_{L^6(B_R^c)}<\eta.
\end{equation}
Similarly,
\begin{equation}\label{vn1}
     \||x|^{-1}*|\tilde{v}_n|^2\|_{L^6(B_R^c)}<\eta.
\end{equation}
Since $|h_n(t)|^2=|t\tilde{u}_n+(1-t)\tilde{v}_n|^2\leq 2\big[t^2|\tilde{u}_n|^2+(1-t)^2|\tilde{v}_n|^2\big]$, we obtain from \eqref{un1} and \eqref{vn1} that
\begin{equation*}
\begin{aligned}
 \||x|^{-1}*|h_n(t)|^2\|_{L^6(B_R^c)}
\leq&2\|t^2|x|^{-1}*|\tilde{u}_n|^2+(1-t)^2|x|^{-1}*|\tilde{v}_n|^2\|_{L^6(B_R^c)}\\
\leq&2t^2\||x|^{-1}*|\tilde{u}_n|^2\|_{L^6(B_R^c)}+2(1-t)^2\||x|^{-1}*|\tilde{v}_n|^2\|_{L^6(B_R^c)}\\
\leq& 2\eta \big[t^2+(1-t)^2\big].
\end{aligned}
\end{equation*}
Consequently, for every $\eta>0$, there exists a constant $R_0>0$ such that for all $R\geq R_0$,
\begin{equation}\label{w3}
\begin{aligned}
 &\sup_{k} \left|\int_0^1\int_{B_R^c}(|x|^{-1}*|h_n(t)|^2)(w_k|v_k)dxdt\right|\\
 \leq& C\sup_k\int_0^1\||x|^{-1}*|h_n(t)|^2\|_{L^6(B_R^c)}\|v_k\|_{L^{\f{12}{5}}(B_R^c)}\|w_k\|_{L^{\f{12}{5}}(B_R^c)}dt\\
 \leq &C_1\sup_k\int_0^1 \||x|^{-1}*|h_n(t)|^2\|_{L^6(B_R^c)}dt< C\eta.
\end{aligned}
\end{equation}
Similarly, we get that for all $R\geq R_0$,
$$\left|\sup_k\int_0^1\int_{B_R^c}(|x|^{-1}*|h_n(t)|^2)(w|v)dxdt\right|<C\eta .$$
Thus, for every $\eta>0$, there exists a large constant $R_0>0$ such that for all $R\geq R_0$,
\begin{equation}\label{w4}
\begin{aligned}
&\limsup_{k\to\infty}\Big|\int_0^1\int_{B_R^c}\Big[(|x|^{-1}*|h_n(t)|^2)(w_k|v_k)-(|x|^{-1}*|h_n(t)|^2)(w|v)\Big]dxdt\Big|
<2C\eta.
\end{aligned}
\end{equation}
We then obtain from \eqref{w2} and \eqref{w4} that
\begin{equation*}
\begin{aligned}
 &\limsup_{k\to \infty}\Big|\int_0^1\int_{\R}\big[(|x|^{-1}*|h_n(t)|^2)(w_k|v_k)-(|x|^{-1}*|h_n(t)|^2)(w|v)\big]dxdt\Big|\\
 \leq& \limsup_{k\to \infty}\Big|\int_0^1\int_{B_R}\big[(|x|^{-1}*|h_n(t)|^2)(w_k|v_k)-(|x|^{-1}*|h_n(t)|^2)(w|v)\big]dxdt\Big|\\
 &+ \limsup_{k\to \infty}\Big|\int_0^1\int_{B_R^c}\big[(|x|^{-1}*|h_n(t)|^2)(w_k|v_k)-(|x|^{-1}*|h_n(t)|^2)(w|v)\big]dxdt\Big|\\\leq&0+2C\eta=2C\eta.
\end{aligned}
\end{equation*}
Since $\eta>0$ is arbitrary, it follows from above that
\begin{equation}\label{Ik1}
    \Big|\int_0^1\int_{\R}\big[(|x|^{-1}*|h_n(t)|^2)(w_k|v_k)-(|x|^{-1}*|h_n(t)|^2)(w|v)\big]dxdt\Big|\to 0 \ \ \mbox{as}\ \, k\to\infty,
\end{equation}
which therefore gives the convergence of the sequence $\{I_k^1\}$ as $k\to\infty$.

On the other hand, we derive from  H\"older's inequality that
\begin{equation*}
\begin{aligned}
 &\left|\int_0^1\int_{\R}\Big\{\big[|x|^{-1}*(h_n(t)|v_k)\big](h_n(t)|w_k)-\big[|x|^{-1}*(h_n(t)|v)\big](h_n(t)|w)\Big\}dxdt\right|\\
 \leq& \Big|\int_0^1\int_{\R}\Big[|x|^{-1}*(h_n(t)|v_k)-|x|^{-1}*(h_n(t)|v)\Big](h_n(t)|w_k)dxdt\Big|\\
 &+\Big|\int_0^1\int_{\R}|x|^{-1}*(h_n(t)|v)\left[(h_n(t)|w_k)-(h_n(t)|w)\right]dxdt\Big|\\
 \leq&\int_0^1\big\||x|^{-1}*(h_n(t)|v_k)-|x|^{-1}*(h_n(t)|v)\big\|_6\|h_n(t)\|_{\f{12}{5}}\|w_k\|_{\f{12}{5}}dt\\
 &+\Big|\int_0^1\||x|^{-1}*(h_n(t)|v)\|_6\|(h_n(t)|w_k)-(h_n(t)|w)\big\|_{\f65}dt\Big|.
\end{aligned}
\end{equation*}
For any fixed $n\in\N^+$, since the sequence  $\{h_n(t)\}$ is uniformly
bounded in $L^{12/5}(\R)$ with respect to $t\in[0,1]$, and $\|h_n(t)\|_{L^{12/5}(B_R^c)}\to0$ uniformly in $t\in[0,1]$   as $R\to\infty$, the same argument of estimating $I_k^1$ yields that
\begin{equation*}
\|(h_n(t)|w_k)-(h_n(t)|w)\|_{\f65}\to 0 \ \text{ as } \ k\to \infty,
\end{equation*}
and
\begin{equation*}
\||x|^{-1}*(h_n(t)|v_k)-|x|^{-1}*(h_n(t)|v)\|_6\to 0 \ \text{ as } \  k\to \infty.
\end{equation*}
We thus have
\begin{equation}\label{w6}
 \begin{split}   \Big|\int_0^1\int_{\R}&\Big\{\big[|x|^{-1}*(h_n(t)|v_k)\big](h_n(t)|w_k)\\
 &-\big[|x|^{-1}*(h_n(t)|v)\big](h_n(t)|w)\Big\}dxdt\Big|\to 0\ \ \mbox{as}\ \, k\to\infty,
 \end{split}
\end{equation}
which therefore gives the convergence of $\{I_k^2\}$ as $k\to\infty$.

We now conclude from \eqref{Ik1} and (\ref{w6}) that the claim \eqref{Wto} holds true.
Multiplying the equation \eqref{wk} by $\bar {w}$ and integrating over $\R$, we then obtain from \eqref{Wto} that
\begin{equation*}
\begin{aligned}
    \int_{\R}\big(|\nabla_{A_n}w|^2+|w|^2\big)dx=& \lim_{k\to \infty}(w_k|w)_{H_{A_n}^1}=\lim_{k\to \infty}\int_{\R}(w|W_n[v_k])dx\\
    =&\lim_{k\to \infty}\int_{\R}(w_k|W_n[v_k])dx\\
    =&\lim_{k\to \infty}\int_{\R}\big(|\nabla_{A_n}w_k|^2+|w_k|^2\big)dx.
\end{aligned}
\end{equation*}
This implies that $w_k \to w$ strongly in $H_{A_n}^1(\R,\mathbb{C})$ as $k\to\infty$, which therefore completes the proof of Lemma \ref{LLn}.\qed

\subsection{Proof of Theorem \ref{T1.1} }

The main aim of this subsection is to complete the proof of Theorem \ref{T1.1}.  Towards this aim, let $\la_k(L_n)$ be the $k$-th eigenvalue of $L_n$ defined by \eqref{Ln} in $H_{A_n}^1(\R,\mathbb{C})$, where $k\in\N^+$. We note for any $v\in H_{A_n}^1(\R,\mathbb{C})$,
\begin{equation}\label{p}
\begin{aligned}
&\int_{\R}(v|W_n[v])dx\\
=&\int_0^1\int_{\R}\Big[
(|x|^{-1}*|h_n(t)|^2)|v|^2
+2(h_n(t)|v)\big(|x|^{-1}*(h_n(t)|v)\big)
\Big]dxdt \geq 0.
\end{aligned}
\end{equation}
We then deduce from Lemma \ref{LLn} and \eqref{p} that the eigenvalues of $L_n$ in $H_{A_n}^1(\R,\mathbb{C})$ satisfy
\begin{equation*}
\begin{aligned}
0\leq\cdots\leq \la_{k+1}(L_n)\leq\la_k(L_n)\leq \cdots\leq \la_1(L_n),
\end{aligned}
\end{equation*}
and
\begin{equation*}
\begin{aligned}
\lim_{k\to\infty} \la_k(L_n)=0 \text{ \ for any\ } n\in \N^+.\\
\end{aligned}
\end{equation*}
By Fischer's min-max principle (cf. \cite[Theorem 28.4]{Lax}), we also have
\begin{equation}\label{lak}
\begin{aligned}
\la_k(L_n)=\sup_{\substack{{E\subset H_{A_n}^1(\R,\mathbb{C})} \\ \dim E=k}}\inf_{v\in E\setminus\{0\}}\f{\int_{\R}(v|W_n[v])dx}{\int_{\R}\big(|\nabla_{A_n}v|^2+|v|^2\big)dx}.
\end{aligned}
\end{equation}
We also define the operator $L: H^1(\R,\mathbb{C})\to H^1(\R,\mathbb{C})$   by
\begin{equation}\label{lakA}
Lv=(-\Delta+1)^{-1}W[v],
\end{equation}
where $W[v]:=(|x|^{-1}*|u_0|^2)v+2u_0(|x|^{-1}*(u_0|v))$, and $u_0>0$ is the unique real-valued ground state of \eqref{w}.

For convenience we denote $(\la_k(L_n), w_n^k)$ and $(\la_k(L), w^k)$ to be the $k$-th eigenpair of the operators $L_n$ and $L$, respectively.
We now establish the following proposition on the convergence of $\lambda_k(L_n)$ in $H_{A_n}^1(\R,\mathbb{C})$ as $n\to\infty$.

\begin{Proposition}\label{P4.1}
Suppose $b_n\to 0$ as $n\to \infty$, then we have
\begin{equation*}
\la_k(L_n)\to \la_k(L) \ \text{ as } \ n\to \infty.
\end{equation*}
Moreover, if $\la_k(L)>0$ and $ w_n^k\in H_{A_n}^1(\R,\mathbb{C})$ satisfies
\begin{equation*}
    L_n w_n^k =\la_k(L_n) w_n^k  \ \text{ in } \ \R, \ \ \int_{\R}\big(|\nabla_{A_n} w_n^k |^2+| w_n^k |^2\big)dx=1,
\end{equation*}
then  up to a subsequence if necessary,
\begin{equation}\label{4:31A}
 w_n^k \to w^k \,\   and  \,\ \nabla_{A_n}{w_n^k}\to \nabla w^k \ \text{ strongly in }\, L^2(\R) \, \text{ as } \, n\to \infty.
\end{equation}
\end{Proposition}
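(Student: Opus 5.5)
Throughout, write $Q_n(v):=\int_{\R}(v|W_n[v])dx$ and $\|v\|_{n}^2:=\int_{\R}(|\nabla_{A_n}v|^2+|v|^2)dx$ for the Rayleigh quotient in \eqref{lak}, and $Q(v):=\int_{\R}(v|W[v])dx$ for its $b=0$ analogue. The plan is to prove $\limsup_{n\to\infty}\la_k(L_n)\le\la_k(L)$ and $\liminf_{n\to\infty}\la_k(L_n)\ge\la_k(L)$ separately via \eqref{lak}. The only input on the ground states is that $\tilde u_n,\tilde v_n$ satisfy \eqref{u11}. Hence $|\tilde u_n|,|\tilde v_n|\to u_0$ in $L^{12/5}(\R)$, and also $h_n(t)\to u_0$ in $L^{12/5}(\R)$ uniformly in $t\in[0,1]$, where $h_n(t)$ is as in \eqref{hn1}. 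Indeed, $\tilde u_n-u_0\to0$ in $L^2$, and $\nabla|\tilde u_n-u_0|$ is bounded in $L^2$ by the diamagnetic inequality \eqref{dmi}, so interpolation gives the $L^{12/5}$ convergence.

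\textbf{Lower bound on $\la_k(L_n)$.} Fix a $k$-dimensional subspace $E\subset C_c^1(\R,\mathbb C)$ that is almost optimal for $\la_k(L)$ in the analogue of \eqref{lak}; this is possible by density. For $v\in E$ we have $\nabla_{A_n}v\to\nabla v$ in $L^2$ uniformly on the unit sphere of $E$ (finite dimension, compact support). Using \eqref{hls} and the $L^{12/5}$ convergence of $h_n(t)$, we get $Q_n(v)\to Q(v)$ uniformly on that sphere. Taking $E$ as the test space in \eqref{lak} then yields $\liminf_{n\to\infty}\la_k(L_n)\ge\la_k(L)$.

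\textbf{Upper bound on $\la_k(L_n)$.} Argue by induction on $k$, proving the eigenvalue convergence and \eqref{4:31A} together. Take normalized eigenfunctions $w_n^1,\dots,w_n^k$, orthonormal in $H^1_{A_n}$. By the Lemma~\ref{L3.1}-type compactness (\cite[Lemmas 2.2, 2.3]{DB}), after passing to a subsequence, $w_n^j\rightharpoonup w^j_*$ and $\nabla_{A_n}w_n^j\rightharpoonup\nabla w^j_*$ weakly in $L^2$, with no translations needed since no translation is applied to $w_n^j$.

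The key claim is tightness:
\[
Q_n(w_n^j)-Q(w^j_*)\to0.
\]
Split each nonlocal term as in the proof of Lemma~\ref{LLn}. On $B_R$ use local $L^{12/5}$ compactness. For the tails, the weights are $|x|^{-1}*|h_n(t)|^2$ and $h_n(t)$. Because $h_n(t)\to u_0$ strongly in $L^{12/5}$, their $L^6(B_R^c)$ and $L^{12/5}(B_R^c)$ norms are small uniformly in $n$ and $t$. Since $w_n^j$ stays bounded, the tail contributions can be made small.

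Passing to the limit in the eigenvalue equation
\[
(w_n^j|\varphi)_{H^1_{A_n}}=\la_j(L_n)^{-1}\int_{\R}(\varphi|W_n[w_n^j])\,dx
\]
with test functions $\varphi\in C_c^1$, using \eqref{01}, shows $Lw^j_*=\mu_jw^j_*$ with $\mu_j=\lim_{n\to\infty}\la_j(L_n)$. Suppose $\mu_j>0$ (if $\mu_k=0$, the bound $\la_k(L_n)\to0\le\la_k(L)$ is trivial). Then
\[
\|w^j_*\|_{H^1}^2=\mu_j^{-1}Q(w^j_*)=\lim_{n\to\infty}\la_j(L_n)^{-1}Q_n(w_n^j)=1,
\]
so norm convergence gives the strong convergence \eqref{4:31A}. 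Strong convergence also preserves orthonormality, so $\{w^j_*\}_{j\le k}$ is orthonormal in $H^1$. Testing the span of these limits in the min-max for $L$ gives $\la_k(L)\ge\mu_k$. Combined with the lower bound this yields $\mu_k=\la_k(L)$, and \eqref{4:31A} holds whenever $\la_k(L)>0$.

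The main obstacle is the tightness step: uniform smallness of the tails of the nonlocal terms along $n$. It relies on the uniform-in-$t$ strong $L^{12/5}$ convergence of $h_n(t)$, which is exactly what the magnetic translations in Lemma~\ref{u1} provide. Without those translations the weights could drift to infinity and the argument would fail.
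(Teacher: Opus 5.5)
Your proof is correct, and its overall architecture matches the paper's. The lower bound $\liminf_{n\to\infty}\la_k(L_n)\ge\la_k(L)$, obtained from a fixed $k$-dimensional test space $E\subset C_c^1(\R,\mathbb C)$, is exactly what the paper does. For the upper bound, both arguments take $H^1_{A_n}$-normalized eigenfunctions, extract weak limits, and rely on convergence of the quadratic forms.

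The difference lies in how the limits are identified. The paper writes the chain $\la_k(L)\le\la_k^*=\int_{\R}(w^k|W[w^k])\,dx\le R(w^k)$ and closes it with $R(w^k)\le\la_k(L)$. That last step requires an induction on $k$: one needs $w^k$ to be $H^1$-orthogonal to $w^1,\dots,w^{k-1}$, already known to be eigenfunctions of $L$ for $\la_1(L),\dots,\la_{k-1}(L)$. The resulting equalities then give $\|w^k\|_{H^1}=1$, strong convergence, and the eigenfunction property all at once.

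You argue differently:
\begin{enumerate}
\item You pass to the limit in the weak form of $L_nw_n^j=\la_j(L_n)w_n^j$ against $C_c^1$ test functions, which shows directly that $Lw_*^j=\mu_jw_*^j$.
\item You obtain $\|w_*^j\|_{H^1}=1$ from the energy identity together with $Q_n(w_n^j)\to Q(w_*^j)$.
\item You use the span of the orthonormal limits $w_*^1,\dots,w_*^k$ as a test space in the sup--inf formula.
\end{enumerate}
This costs one extra, easy convergence, namely that of $\int_{\R}(\varphi|W_n[w_n^j])\,dx$. In exchange, the induction you announce becomes unnecessary: nothing in your argument uses an inductive hypothesis, and all $j\le k$ are handled simultaneously.

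You also make explicit a point the paper covers only by ``similar to \eqref{Wto}''. That convergence was proved for a fixed weight $h_n(t)$, whereas here the weights move with $n$. Your observation that $h_n(t)\to u_0$ in $L^{12/5}(\R)$ uniformly in $t\in[0,1]$ (from \eqref{u11}, the diamagnetic inequality and interpolation) is what supplies the tail control uniformly in $n$.
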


\noindent{\bf Proof.} Since it yields from \cite[Theorem 7.22]{Lieb01} that the space $C_c^1(\R,\mathbb{C})$ is dense in $H_{A_n}^1(\R,\mathbb{C})$, we deduce  from \eqref{Ln} and \eqref{lak} that
\begin{equation*}
\begin{aligned}
&\la_k(L_n)\\
=&\sup_{\substack{{E\subset C_c^1(\R,\mathbb{C})} \\ \dim E=k}}\inf_{v\in E\setminus\{0\}}\f{\int_0^1\int_{\R}\Big[(|x|^{-1}*|h_n|^2)|v|^2+2(h_n|v)\big(|x|^{-1}*(h_n|v)\big)\Big]dxdt}{\int_{\R}\big(|\nabla_{A_n}v|^2+|v|^2\big)dx}\\
:=&\sup_{\substack{{E\subset C_c^1(\R,\mathbb{C})} \\ \dim E=k}}\inf_{v\in E\setminus\{0\}}R_n(v),
\end{aligned}
\end{equation*}
and
\begin{equation}\label{llak}
\begin{aligned}
 \la_k(L)&=\sup_{\substack{{E\subset C_c^1(\R,\mathbb{C})} \\ \dim E=k}}\inf_{v\in E\setminus\{0\}}\f{\int_{\R}\Big[(|x|^{-1}*|u_0|^2)|v|^2+2(u_0|v)\big(|x|^{-1}*(u_0|v)\big)\Big]dx}{\int_{\R}\big(|\nabla v|^2+|v|^2\big)dx}\\
 &:=\sup_{\substack{{E\subset C_c^1(\R,\mathbb{C})} \\ \dim E=k}}\inf_{v\in E\setminus\{0\}}R(v),
\end{aligned}
\end{equation}
where $h_n:=h_n(t)=t\tilde{u}_n+(1-t)\tilde{v}_n$ for $t\in [0,1]$.
Since $\nabla_{A_n}u=\nabla u+iA_nu$ and $b_n \to 0$ as $n\to\infty$, we obtain that for any $v\in C_c^1(\R,\mathbb{C})$,
\begin{equation*}
\begin{aligned}
     \|\nabla_{A_n}v-\nabla  v\|_{L^2(\R)}&=\|iA_nv\|_{L^2(\operatorname{supp}(v))}\\&\leq \|A_n\|_{L^2(\operatorname{supp}(v))}\|v\|_{L^{\infty}}\to 0  \  \text{ as } n \to \infty.
\end{aligned}
\end{equation*}
Therefore, if the dimension of the linear subspace $E\subset C_c^1(\R,\mathbb{C})$ is $k$, then it follows from Lemma \ref{u1} and above that  $R_n(v)\to R(v)$ uniformly in $v\in E\setminus\{0\}$ as $n\to \infty$. Hence for any $\varepsilon>0$, there exists $N_{\varepsilon}\in \mathbb{N}^+$ such that for any $n>N_{\varepsilon}$,
\begin{equation*}
    R_n(v)\geq R(v)-\varepsilon \ \ \text{ for all } \ v\in E\setminus\{0\}.
\end{equation*}
By the definition of $\la_k(L_n)$, then for any fixed finite dimensional subspace $E\subset C_c^1(\R,\mathbb{C})$,
\begin{equation}\label{R22}
    \la_k(L_n)\geq \inf_{v\in E\setminus\{0\}}R_n(v)\geq\inf_{v\in E\setminus\{0\}}R(v)-\varepsilon \ \ \text{for all } \ n>N_{\varepsilon}.
\end{equation}
This thus implies that
\begin{equation*}
     \liminf_{n\to \infty}\la_k(L_n)\geq\liminf_{n\to\infty}\lim_{\varepsilon\to0^+}\inf_{v\in E\setminus\{0\}}R(v)-\varepsilon=\inf_{v\in E\setminus\{0\}}R(v).
\end{equation*}
Since $E$ is an arbitrary $k$-dimensional subspace of $C_c^1(\R,\mathbb{C})$, we obtain from above that
\begin{equation}\label{b}
    \liminf_{n\to \infty}\la_k(L_n)\geq\sup_{\substack{{E\subset C_c^1(\R,\mathbb{C})} \\ \dim E=k}}\inf_{v\in E\setminus\{0\}}R(v)= \la_k(L).
\end{equation}

Similar to \eqref{p}, the quadratic form of $L$ in $H^1(\R,\mathbb{C})$ is nonnegative, which gives that $\lambda_k(L)\ge0$ holds for all $k\in \mathbb{N}^+$. As a consequence, if
\begin{equation*}
    \la_k^*:=\limsup_{n\to \infty}\la_k(L_n)=0,
\end{equation*}
 then we immediately obtain from \eqref{b} that up to a subsequence if necessary,
\begin{equation*}
    \lim_{n\to \infty}\la_k(L_n)=\la_k(L).
\end{equation*}
It remains to consider the case $\la^*_k>0$. Since $0\leq \la_k(L_n)\leq\la_1(L_n)$ is bounded for each $k\in\mathbb{N}^+$, we obtain that up to a subsequence if necessary,
\begin{equation}\label{3.235}
    \lim_{n\to \infty}\la_k(L_n)= \la_k^*>0.
\end{equation}
Let $(\la_k(L_n), w^k_n)$ be the $k$-th eigenpair of $L_n$ in $H_{A_n}^1(\R,\mathbb{C})$, so that $\int_{\R}(|\nabla_{A_n}w_n^k|^2+|w_n^k|^2)dx=1$, and $L_nw_n^k=\la_k(L_n)w_n^k$ in $\R$. We then derive from \eqref{Ln} that
\begin{equation}\label{3.225}
\int_{\R}(w_n^k|W_n[w_n^k])dx=\la_k(L_n),
\end{equation}
and
\begin{equation}\label{323}
    \int_{\R}(w_n^k|W_n[w_n^j])dx=\la_j(L_n)\int_{\R}\big[(\nabla_{A_n}w_n^k|\nabla_{A_n}w_n^j)+(w_n^k|w_n^j)\big]dx=\la_j(L_n)\delta_{kj},
\end{equation}
where the operator $W_n$ is as in \eqref{3.12M}.
Applying \cite[Lemma 2.2]{DB}, we then derive from \eqref{323} that there exists a function $w^k \in H^1(\R,\mathbb{C})$ such that up to a subsequence if necessary, $w_n^k\rightharpoonup w^k$ and $\nabla_{A_n}w_n^k\rightharpoonup \nabla w^k$ weakly in $L^2(\R)$ as $n\to \infty$.

Similar to \eqref{Wto}, we obtain from \eqref{3.225} that
\begin{equation}\label{3.245}
    0<\la_k^*=\limsup_{n\to\infty}\int_{\R}(w_n^k|W_n[w_n^k])dx= \int_{\R}(w^k|W[w^k])dx,
\end{equation}
which then implies that $w^k\not\equiv 0$. We then conclude from \eqref{b}--\eqref{3.225} and \eqref{3.245} that
\begin{equation}\label{20}
\begin{aligned}
\la_k(L)\leq&\liminf_{n\to \infty}\la_k(L_n)\leq\la_k^*=\limsup_{n\to \infty}\int_{\R}(w_n^k|W_n[w_n^k])dx\\
=&\int_{\R}(w^k|W[w^k])dx\leq \f{\int_{\R}(w^k|W[w^k])dx}{\int_{\R}\big(|\nabla w^k|^2+|w^k|^2\big)dx},
\end{aligned}
\end{equation}
where we have used the estimate $\|w^k\|_{H^1}\leq1$ for the last inequality. Moreover,   similar to \eqref{Wto}, one can deduce from \eqref{323} that $\int_{\R}(w^k|W[w^j])=\lim_{n\to\infty}(w_n^k|W_n[w_n^j])=0$ holds for $j\ne k$.

We are next ready to finish the proof by induction. For $k=1$, we obtain from \eqref{llak} that
\begin{equation}\label{d}
\f{\int_{\R}(w^1|W[w^1])dx}{\int_{\R}\big(|\nabla w^1|^2+|w^1|^2\big)dx}\leq\sup_{w\in H^1(\R,\mathbb{C})\setminus\{0\}}\f{\int_{\R}(w|W[w])dx}{\int_{\R}\big(|\nabla w|^2+|w|^2\big)dx}=\la_1(L).
\end{equation}
It then follows from \eqref{3.245}--\eqref{d} that
$$Lw^1=\la_1(L)w^1 \ \  \mbox{in}\ \  \R, \ \ \mbox{and} \ \ \la_1^*=\la_1(L).$$
Hence, we deduce from \eqref{20} and above that
\begin{equation*}
    \la_1(L)\geq \la_1(L)\int_{\R}\big(|\nabla  w^1|^2+|w^1|^2\big)dx=\int_{\R}(w^1|W[w^1])dx\geq\liminf_{n\to \infty}\la_1(L_n)\geq\la_1(L).
\end{equation*}
We thus have $\nabla_{A_n}w_n^1\to \nabla  w^1$ and $w_n^1\to w^1$ strongly in $L^2(\R)$ as $n\to \infty$, which proves (\ref{4:31A}) for $k=1$.

For any $k\geq1$,  suppose that (\ref{4:31A}) holds for all $j\in\{1,\cdots,k-1\}$. We then have
\begin{equation*}
    (w_n^k,w_n^j)_{H_{A_n}^1}=0, \ \ j=1,\cdots,k-1,
\end{equation*}
which further implies that
\begin{equation*}
    (w^k,w^j)_{H^1}=0, \ \ j=1,\cdots,k-1.
\end{equation*}
Since (\ref{4:31A}) holds for all $j\in\{1,\cdots,k-1\}$, it follows from \eqref{llak} that
\begin{equation}\label{g}
    \f{\int_{\R}(w^k|W[w^k])dx}{\int_{\R}(|\nabla w^k|^2+|w^k|^2)dx}\leq\la_k(L).
\end{equation}
We thus obtain from \eqref{20} and \eqref{g} that $Lw^k=\la_k(L)w^k$ in $\R$ and $\la_k^*=\la_k(L)$. Hence, we have
\begin{equation*}
     \la_k(L)\geq \la_k(L)\int_{\R}\big(|\nabla  w^k|^2+|w^k|^2\big)dx=\int_{\R}(w^k|W[w^k])dx\geq\liminf_{n\to \infty}\la_k(L_n)\geq\la_k(L).
\end{equation*}
This implies that $w_n^k\to w^k$ and $\nabla_{A_n}w_n^k\to \nabla  w^k$ strongly in $L^2(\R)$ as $n\to \infty$, $i.e.,$ (\ref{4:31A}) holds for any $k\geq1$. The proof of Proposition \ref{P4.1} is therefore complete.\qed

\vspace{5pt}
Applying Proposition \ref{P4.1}, we are next able to address the proof of Theorem \ref{T1.1}.

\vspace{5pt}
\noindent{\bf Proof of Theorem \ref{T1.1}.} Since it follows from Lemma \ref{LLn} that the operator $L_n$ defined by \eqref{Ln} is self-adjoint and compact in $H_{A_n}^1(\R,\mathbb{C})$, we derive from spectral theory that
\begin{equation*}
    L_n=\sum_{k=1}^{\infty} \la_k(L_n)|w_n^k\rangle \langle w_n^k|,
\end{equation*}
where   $0\leq \cdots\leq \la_k(L_n)\leq \cdots\leq \la_1(L_n)$ satisfy $\lim_{k\to \infty} \la_k(L_n)=0$ for all $n\in \N^+$, and $(\la_k(L_n), w_n^k)$ is the $k$-th eigenpair of the operator $L_n$  in $H_{A_n}^1(\R,\mathbb{C})$. Set
\begin{equation*}
    P_n^1=|w_n^1\rangle \langle w_n^1|, \ P_n^2=\sum_{k=2}^5|w_n^k\rangle \langle w_n^k| \,\ \text{ and }\ P_n^3=I-P_n^1-P_n^2,
\end{equation*}
so that
\begin{equation*}
     L_nP_n^1=P_n^1L_n,\quad L_nP_n^3=P_n^3L_n.
\end{equation*}
It then follows from \eqref{3.12} that $L_n(\widetilde{u}_n-\widetilde{v}_n)=\widetilde{u}_n-\widetilde{v}_n$ in $\R$, where $\widetilde{u}_n$ and $\widetilde{v}_n$ given by (\ref{tuv}) are two distinct solutions of \eqref{equ}. This thus yields that
\begin{equation}\label{Pn1}
\begin{aligned}
 \|P_n^1(\widetilde{u}_n-\widetilde{v}_n)\|^2_{H_{A_n}^1}&=\big(P_n^1(\widetilde{u}_n-\widetilde{v}_n)|P_n^1L_n
 (\widetilde{u}_n-\widetilde{v}_n)\big)_{H_{A_n}^1}\\&=\la_1(L_n)\|P_n^1(\widetilde{u}_n-\widetilde{v}_n)\|_{H_{A_n}^1}^2.
\end{aligned}
\end{equation}
Applying \cite{Len} and \cite[Proposition~5]{FLR13}, it then follows from \eqref{12} that
\begin{equation*}
    \la_1(L)=3;\ \la_k(L)=1,\ \ k=2,3,4,5; \ \ \la_k(L)<1, \ \ k\geq 6.
\end{equation*}
Applying Proposition \ref{P4.1}, we then get that as $n\to\infty$,
\begin{equation*}
\begin{aligned}
&\la_1(L_n)\to \la_1(L)>1; \ \la_k(L_n)\to \la_k(L)=1, \ \ 2\leq k\leq 5;\ \  \la_6(L_n)\to \la_6(L)<1.
\end{aligned}
\end{equation*}
Since $\la_1(L_n)\to \la_1(L)>1$ as $n\to \infty$, it follows from \eqref{Pn1} that $P_n^1(\widetilde{u}_n-\widetilde{v}_n)=0$ holds for sufficiently large $n\in \N^+$. Similarly,
\begin{equation*}
\begin{aligned}
 \|P_n^3(\widetilde{u}_n-\widetilde{v}_n)\|^2_{H_{A_n}^1}&=\big(P_n^3(\widetilde{u}_n-\widetilde{v}_n)|P_n^3L_n(\widetilde{u}_n-\widetilde{v}_n)\big)_{H_{A_n}^1}\\&\leq\la_6(L_n)\|P_n^3(\widetilde{u}_n-\widetilde{v}_n)\|_{H_{A_n}^1}^2.
\end{aligned}
\end{equation*}
Since $\la_6(L_n)\to \la_6(L)<1$ as $n\to \infty$, the above identity holds, if and only if $P_n^3(\widetilde{u}_n-\widetilde{v}_n)=0$ holds for sufficiently large $n\in \N^+$. Hence,   $\widetilde{u}_n-\widetilde{v}_n\in \operatorname{span}_{\mathbb{R}}\{w_n^2, w_n^3, w_n^4, w_n^5\}$ holds for   sufficiently large $n\in \N^+$.

By contradiction, we now suppose that $\widetilde{u}_n\not \equiv\widetilde{v}_n$ holds for sufficiently large $n\in \N^+$. Define the $H_{A_n}^1$-normalized function
\begin{equation}\label{r}
    r_n=\f{\widetilde{u}_n-\widetilde{v}_n}{\|\widetilde{u}_n-\widetilde{v}_n\|}_{H_{A_n}^1} \ \text{ for sufficiently large } n\in \N^+,
\end{equation}
so that $r_n\in \operatorname{span}\{w_n^2,w_n^3,w_n^4,w_n^5\}$. It then yields from Proposition \ref{P4.1} that there exist $y\in \R$ and $\la\in \mathbb{R}$ such that
\begin{equation*}
\begin{aligned}
    &r_n\to r:=\nabla u_0\cdot y+\la iu_0 ,\ \nabla_{A_n}r_n\to \nabla r \ \text{ strongly in } L^2(\R) \ \text{ as } \ n\to\infty ,
\end{aligned}
\end{equation*}
$i.e.$,
\begin{equation}\label{z1}
    \lim_{n\to\infty}\int_{\R}\big(|\nabla_{A_n}r_n-\nabla r|^2+|r_n-r|^2\big)dx=0.
\end{equation}
Moreover,  we deduce from Lemma \ref{u1} that for sufficiently large $n\in\N^+$,
\begin{equation}\label{z2}
   \int_{\R}\big[(\nabla_{A_n}r_n|\nabla r)+(r_n|r)\big]dx=0.
\end{equation}
we then obtain from \eqref{r}--\eqref{z2} that
\begin{equation*}
\begin{aligned}
 \|\widetilde{u}_n-\widetilde{v}_n\|^2_{H_{A_n}^1}&= \|\widetilde{u}_n-\widetilde{v}_n\|^2_{H_{A_n}^1}\int_{\R}\big[(\nabla_{A_n}r_n|\nabla_{A_n} r_n)+(r_n|r_n)\big]dx\\&=\|\widetilde{u}_n-\widetilde{v}_n\|^2_{H_{A_n}^1}\int_{\R}\big[(\nabla_{A_n}r_n-\nabla r|\nabla_{A_n} r_n)+(r_n-r|r_n)\big]dx\\&\leq\|\widetilde{u}_n-\widetilde{v}_n\|^2_{H_{A_n}^1}\sqrt{\int_{\R}\big(|\nabla_{A_n}r_n-\nabla r|^2+|r_n-r|^2\big)dx}.
\end{aligned}
\end{equation*}
Since $\|\widetilde{u}_n-\widetilde{v}_n\|^2_{H_{A_n}^1}\ne0$, the above estimate gives that
\begin{equation*}
   \int_{\R}\big(|\nabla_{A_n}r_n-\nabla r|^2+|r_n-r|^2\big)dx\geq 1,
\end{equation*}
which however contradicts with \eqref{z1}. We thus conclude that $\widetilde{u}_n\equiv \widetilde{v}_n$ holds for sufficiently large $n\in\N^+$, which proves the uniqueness (\ref{1A:T1.1}).

It remains to prove the nondegeneracy \eqref{1B:T1.1}. Let $u_n$ be any ground state of \eqref{equ} satisfying
\begin{equation*}
 A_n=\frac{b_n}{2}(-x_2,x_1,0),\ \ \lim_{n\to\infty}b_n=0.
\end{equation*}
Define the linearized operator at $u_n$ by
\begin{equation*}
\begin{aligned}
\mathcal L_n w
:={}&-\Delta_{A_n}w+w
-\big(|x|^{-1}*|u_n|^2\big)w\\
&-2u_n\big(|x|^{-1}*(u_n|w)\big)\text{ \ in \ }\R.
\end{aligned}
\end{equation*}
By the phase and magnetic translation invariance of \eqref{equ}, the following
four functions
\begin{equation*}
iu_n,\quad
Z_{n,1}:=-\partial_{x_1}u_n-\frac{ib_n}{2}x_2u_n,\quad
Z_{n,2}:=-\partial_{x_2}u_n+\frac{ib_n}{2}x_1u_n,\quad
Z_{n,3}:=-\partial_{x_3}u_n
\end{equation*}
satisfy
\begin{equation}\label{kerl}
iu_n,\ Z_{n,1},\ Z_{n,2},\ Z_{n,3}\in\ker\mathcal L_n.
\end{equation}
Moreover, these four functions are linearly independent over
$\mathbb R$.

Denote
\begin{equation*}
K_nw:=(-\Delta_{A_n}+1)^{-1}\Big[\big(|x|^{-1}*|u_n|^2\big)w+2u_n\big(|x|^{-1}*(u_n|w)\big)\Big],
\end{equation*}
so that
\begin{equation}\label{kerlk}
\ker\mathcal L_n=\ker(I-K_n).
\end{equation}
Since magnetic translations and phase shifts preserve the spectrum of
$K_n$, together with Lemma~\ref{u1}, the same spectral convergence argument as in
Proposition~\ref{P4.1} gives that as $n\to\infty$,
\begin{equation*}
\lambda_1(K_n)\to\lambda_1(L)=3,\ \
\lambda_k(K_n)\to\lambda_k(L)=1\ (2\leq k\leq5),\ \
\lambda_6(K_n)\to\lambda_6(L)<1,
\end{equation*}
where the operator $L$ is as in Proposition~\ref{P4.1}.
Hence, for all sufficiently large $n>0$,
\begin{equation*}
\dim\ker(I-K_n)\leq4.
\end{equation*}
Together with \eqref{kerl} and \eqref{kerlk}, this implies that
\begin{equation*}
\ker\mathcal L_n
=
\operatorname{span}_{\mathbb R}
\{iu_n,Z_{n,1},Z_{n,2},Z_{n,3}\}.
\end{equation*}
Hence, \eqref{1B:T1.1} holds for all sufficiently small $|b|>0$,
which therefore completes the proof of Theorem~\ref{T1.1}.  \qed

\section{Symmetry and Monotonicity of Ground States}\label{S4}
In this section we mainly prove Theorem~\ref{T1.2}, which is concerned with the symmetry and monotonicity of ground states for \eqref{equ}, where $|b|>0$ is sufficiently small. We first consider the following auxiliary equation
\begin{equation}\label{22}
	-\Delta u+(1+|A(x)|^2)u=u(|x|^{-1}*|u|^2) \text{ \ in } \ \R, \ \  u\in \H_b,
\end{equation}
where $A(x)=\frac{b}{2}(-x_2,x_1,0)$ and the space $ \H_b$ is defined by
\begin{equation}\label{Hb}
     \H_b=\big\{ u \in H^1(\R, \mathbb{C}) : \,  \int_{\R}|A(x)|^2|u|^2dx<\infty \big\}.
\end{equation}
The  ground state energy of \eqref{22} is defined by
\begin{equation}\label{web}
 \widetilde{e}(b)=\inf\Big\{\widetilde{I}_b(u): \ \, u\in\H_b\setminus\{0\},\, \langle\widetilde{I}'_b(u),\,u\rangle=0\Big\},
\end{equation}
where the energy functional $\widetilde{I}_{b}(u)$ satisfies
\begin{equation*}
\widetilde{I}_{b}(u)=\f{1}{2}\int_{\R}\big[|\nabla u|^2+(1+|A(x)|^2)|u|^2\big]dx-\f14\iint_{\R\times \R}\f{|u(x)|^2|u(y)|^2}{|x-y|}dxdy.
\end{equation*}
A function $u\in\H_b$ is called a ground state of \eqref{22}, if $u$ is a weak solution of \eqref{22} and satisfies $ \widetilde{e}(b)= \widetilde{I}_{b}(u)$.

The following lemma gives the existence of ground states for \eqref{22}.

\begin{Lemma}\label{L4.1}
Suppose $A=\f{b}{2}(-x_2,x_1,0)$, then \eqref{22} admits a ground state $u\in\H_b$, in the sense that
\begin{equation*}
\widetilde{I}_b(u)=\widetilde{e}(b), \ \widetilde{I}_b'(u)=0.
\end{equation*}
Moreover,
\begin{equation}\label{eb}
    \widetilde{e}(b)=\f14\inf_{u\in\H_b\setminus\{0\}}\widetilde{Q}_b(u)^2,
\end{equation}
where the functional $\widetilde{Q}_b:\H_b\setminus\{0\}\to \mathbb{R}$ is defined by
\begin{equation}\label{wideQ}
    \widetilde{Q}_b(u)=\f{\int_{\R}\big[|\nabla u|^2+(1+|A|^2)|u|^2\big]dx}{(\iint_{\R\times \R}\f{|u(x)|^2|u(y)|^2}{|x-y|}dxdy)^{\f12}}.
\end{equation}
\end{Lemma}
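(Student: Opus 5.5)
We follow the scheme of Proposition~\ref{ex}, with two simplifications: $\H_b$ is a weighted space, and the potential $1+|A|^2$ is translation invariant only in the $x_3$-direction.

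\textbf{Step 1: the Nehari reduction.} For $v\in\H_b\setminus\{0\}$ set
\begin{equation*}
\widetilde A_v=\int_{\R}\big[|\nabla v|^2+(1+|A|^2)|v|^2\big]dx,\qquad B_v=\iint_{\R\times\R}\frac{|v(x)|^2|v(y)|^2}{|x-y|}dxdy .
\end{equation*}
By the Hardy--Littlewood--Sobolev inequality \eqref{HLSI} and Sobolev embedding, $0<B_v\le C\|v\|_{H^1}^4<\infty$. The scaling $t_v=(\widetilde A_v/B_v)^{1/2}$ puts $t_vv$ on the Nehari manifold, and
\begin{equation*}
\widetilde I_b(t_vv)=\tfrac14\widetilde A_v^2/B_v=\tfrac14\widetilde Q_b(v)^2 .
\end{equation*}
Conversely, every Nehari element $u$ satisfies $\widetilde I_b(u)=\frac14\widetilde A_u=\frac14\widetilde Q_b(u)^2$. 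Together these give \eqref{eb}. It therefore suffices to show that $\widetilde q(b)=\inf_{\H_b\setminus\{0\}}\widetilde Q_b$ is attained. Once it is, rescale a minimizer onto the Nehari manifold. The Nehari manifold is a natural constraint: the minimizer satisfies $\widetilde Q_b'(v)=0$, which is equivalent to $\widetilde I_b'(t_vv)=0$ after scaling. This yields a ground state.

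\textbf{Step 2: a minimizing sequence and translation in $x_3$.} Take a minimizing sequence with $B_{v_n}=1$. Then $\{v_n\}$ is bounded in $\H_b$, i.e.\ in $H^1$ and in the weighted $L^2$ norm with weight $|A|^2=\frac{b^2}{4}(x_1^2+x_2^2)$.

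Vanishing is excluded exactly as in Proposition~\ref{ex}: by Lions' lemma \cite[Lemma 1.21]{Willem}, vanishing would force $B_{v_n}\to0$. Hence there are $y_n\in\R$ and $r>0$ with $\int_{B_r(y_n)}|v_n|^2\ge\delta>0$.

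The key point is that the transverse components $(y_{n,1},y_{n,2})$ stay bounded when $b\ne0$. Indeed, $\int|A|^2|v_n|^2\le C$ while $|A|^2\ge \frac{b^2}{4}R^2$ outside the cylinder $\{x_1^2+x_2^2<R^2\}$. So we may translate only in $x_3$, which leaves $\widetilde Q_b$ invariant, and assume $y_n$ is bounded. Then $v_n\rightharpoonup v\not\equiv0$ weakly in $\H_b$.

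For $b=0$ the space $\H_0=H^1$ is fully translation invariant and the problem is the classical Choquard one, covered by Remark~\ref{remark:2.1}.

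\textbf{Step 3: compactness.} The weight yields uniform tightness in the $(x_1,x_2)$ directions, but not in $x_3$, so a concentration-compactness step is still required.

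I would use the Brezis--Lieb-type splitting \cite[Lemma 2.2]{Frank14}, as in \eqref{dic}, for the nonlocal term. The quadratic form $\widetilde A$ splits exactly under weak convergence in the Hilbert space $\H_b$:
\begin{equation*}
\widetilde A_{v_n}=\widetilde A_v+\widetilde A_{v_n-v}+o(1).
\end{equation*}
The scaling law $\widetilde q_b(\lambda)=\sqrt\lambda\,\widetilde q_b(1)$ then gives the strict subadditivity \eqref{binding}, which rules out dichotomy. Hence $B_v=1$, and weak lower semicontinuity shows that $v$ is a minimizer.

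I expect the main (though mild) obstacle to be checking that $\widetilde q_b(1)>0$ and the dichotomy inequality in the anisotropic weighted setting. Both follow verbatim from the argument of \cite{Est} once the splitting above is in hand.
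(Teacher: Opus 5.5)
Your proposal is correct and follows the paper's route: the paper omits this proof as ``similar to Proposition \ref{ex}'', i.e.\ the Nehari reduction to $\widetilde q(b)$, exclusion of vanishing, the nonlocal Brezis--Lieb splitting, and strict subadditivity from $\widetilde q_b(\lambda)=\sqrt{\lambda}\,\widetilde q_b(1)$. Your use of $x_3$-translations only, with the weight $|A|^2$ keeping the concentration points in a bounded transverse region, is exactly the adaptation the paper leaves implicit; note also that for $b=0$ the statement is literally Proposition \ref{ex} itself, which is the right reference rather than Remark \ref{remark:2.1}.
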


Since the proof of Lemma  \ref{L4.1} is similar to that of Proposition~\ref{ex}, we omit it for simplicity. We should point out that the proof of Lemma  \ref{L4.1} also yields the existence of minimizers for the following minimization
\begin{equation}\label{qb11}
\widetilde q(b):=\inf_{u\in\H_b\setminus\{0\}}\widetilde Q_b(u),
\end{equation}
where the functional $\tilde{Q}_b(u)$ is as in \eqref{wideQ}.
We then establish the following proposition.

\begin{Proposition}\label{P}
Let $e(b)$ and $\widetilde{e}(b)$ be defined by \eqref{inf} and \eqref{web}, respectively, where $A(x)=\frac{b}{2}(-x_2,x_1,0)$. Then there exists a sufficiently small $\varepsilon>0$ such that   for any $0<|b|<\varepsilon$,
\begin{equation}\label{equal-energy}
e(b)=\widetilde{e}(b),
\end{equation}
and we have the following conclusions:
\begin{enumerate}
\item If $u$ is a ground state of \eqref{equ}, then there exists $x_0\in\R$ such that $\tau_{x_0,A}u$ is a ground state of \eqref{22}.
\item If $w$ is a ground state of \eqref{22}, then $w$ is also a ground state of \eqref{equ}. Moreover, there exists a constant $\theta\in\mathbb R$ such that $w\equiv e^{i\theta}|w|$.
\end{enumerate}
\end{Proposition}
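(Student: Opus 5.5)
The plan rests on one pointwise identity. For a real-valued $v$, and since $\operatorname{div}A=0$,
\begin{equation*}
|\nabla_A v|^2=|\nabla v|^2+|A|^2|v|^2 ,
\end{equation*}
so that $Q_A(v)=\widetilde Q_b(v)$ for every real $v\in\H_b\setminus\{0\}$. The same holds for $e^{i\theta}v$ with $\theta$ constant. Combining this with \eqref{ec} and \eqref{eb}, the equality \eqref{equal-energy} reduces to showing that both infima are attained at a common real function, up to a phase.

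\textbf{Step 1: a common real candidate.} Take a minimizer $w$ of $\widetilde q(b)$, which exists by Lemma~\ref{L4.1}. Since $|\nabla|w||\le|\nabla w|$, the function $|w|$ is again a minimizer. Next apply the Schwarz rearrangement in $(x_1,x_2)$ for fixed $x_3$:
\begin{itemize}
\item the gradient term does not increase (P\'olya--Szeg\H{o} for Steiner-type symmetrization);
\item $\int|A|^2|w|^2$ does not increase, because $|A|^2=\frac{b^2}{4}(x_1^2+x_2^2)$ is radially increasing in $(x_1,x_2)$;
\item the Coulomb term does not decrease, by the Riesz rearrangement inequality for the kernel $|x-y|^{-1}$, which is symmetric decreasing in the transverse variables.
\end{itemize}
This produces a minimizer $W\ge0$ of $\widetilde q(b)$ that is radial in $(x_1,x_2)$, rescaled to lie on the Nehari manifold of \eqref{22}. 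For such $W$ we have $A\cdot\nabla W=0$, so
\begin{equation*}
-(\nabla+iA)^2W=-\Delta W+|A|^2W .
\end{equation*}
Hence $W$ solves \eqref{equ}, and $I_b(W)=\widetilde I_b(W)=\widetilde e(b)$. In particular $\widetilde e(b)\ge e(b)$ after Nehari scaling.

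\textbf{Step 2 (main obstacle): the reverse inequality.} It remains to show that $W$ is actually a ground state of \eqref{equ}, i.e.\ $\widetilde e(b)\le e(b)$. I would argue by contradiction along $b_n\to0$.
\begin{itemize}
\item First, $\widetilde e(b)\to e(0)$, using test functions in $C_c^1$ exactly as in Proposition~\ref{P3.1}, together with $\widetilde e(b)\ge e(b)\to e(0)$.
\item Repeating Lemma~\ref{L3.1}/Proposition~\ref{P3.1} with the potential $|A_n|^2\ge0$ then shows that $W_n$, after a translation (only in $x_3$, by cylindrical symmetry), converges strongly to $u_0$.
\item Let $u_n$ be a ground state of \eqref{equ}. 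After Lemma~\ref{u1}, both $u_n$ and $W_n$ are solutions of \eqref{equ} that are $H^1_{A_n}$-close to $u_0$, up to magnetic translations and phases, and both can be normalized by the orthogonality conditions \eqref{u3}.
\item The spectral argument in the proof of Theorem~\ref{T1.1} (the operator $L_n$, Proposition~\ref{P4.1}, and the eigenvalue pattern $3,1,1,1,1,<1$) only uses that the two solutions converge strongly to $u_0$ and satisfy \eqref{u3}. Minimality is not used there, so it forces $\widetilde W_n=\widetilde u_n$.
\end{itemize}
Therefore $I_b(W)=e(b)$, which gives \eqref{equal-energy} for $0<|b|<\varepsilon$. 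The delicate parts are checking that the compactness argument transfers to $W_n$, and that the uniqueness argument genuinely does not use minimality of both solutions.

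\textbf{Step 3: conclusions (1) and (2).} For (1), Step 2 combined with Theorem~\ref{T1.1} gives $u=e^{i\theta}\tau_{a,A}W$ for a ground state $u$ of \eqref{equ}. By \eqref{t}, $\tau_{-a,A}u$ is a constant phase times $W$. Since $\widetilde I_b$ is phase invariant, $\tau_{-a,A}u$ is a ground state of \eqref{22}, so we may take $x_0=-a$. For (2), let $w$ be a ground state of \eqref{22}. Then
\begin{equation*}
\widetilde Q_b(|w|)\le\widetilde Q_b(w),
\end{equation*}
so equality holds and $|\nabla|w||=|\nabla w|$ a.e. Moreover $|w|>0$, by the strong maximum principle applied to the equation satisfied by the minimizer $|w|$. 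The standard equality case then yields $w=e^{i\theta}|w|$ with $\theta$ constant. Finally, $|w|$ is real, so
\begin{equation*}
Q_A(|w|)=\widetilde Q_b(|w|)=2\sqrt{\widetilde e(b)}=2\sqrt{e(b)}
\end{equation*}
by \eqref{equal-energy}. Hence $|w|$, which already lies on the common Nehari level, is a minimizer of $Q_A$ and thus a ground state of \eqref{equ}. So is $w=e^{i\theta}|w|$.
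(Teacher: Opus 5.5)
Your argument is correct. The easy inequality $\widetilde e(b)\ge e(b)$ and conclusion (2) are handled exactly as in the paper; the difference lies in the reverse inequality, and hence in (1).

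The paper starts from an arbitrary ground state $u$ of \eqref{equ}. It uses the exponential decay of $|u|$ to translate $u$ to its $L^2$ mass center, with $x_0$ given in \eqref{a}. It then invokes the argument of \cite[Proposition 5.1 and Lemma 5.2]{DB}, which it does not carry out in detail, to obtain $A\cdot\nabla v=0$ for $v=\tau_{x_0,A}u$. Thus $v$ solves \eqref{22} with $\widetilde I_b(v)=I_b(v)=e(b)$, which yields $\widetilde e(b)\le e(b)$ and (1) at once, with an explicit $x_0$.

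You go the opposite way. Steiner symmetrization produces a real, transversally radial ground state $W$ of \eqref{22} that also solves \eqref{equ}. You then identify $W$ with a ground state of \eqref{equ} through local uniqueness near $u_0$.

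The two points you flagged as delicate do check out:
\begin{enumerate}
\item $W_n$ is a critical point of $I_{b_n}$ with $I_{b_n}(W_n)=\widetilde e(b_n)\to e(0)$. So Lemma~\ref{L3.1} applies directly, and the $x_3$-only translation is not even needed. The norm identity at the end of the proof of Proposition~\ref{P3.1} gives strong convergence, and the degree argument of Lemma~\ref{u1} yields the normalization \eqref{u3}.
\item In the proof of Theorem~\ref{T1.1}, the ground-state property is used only to reach \eqref{u11}--\eqref{u3}.
\end{enumerate}

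What your route buys is self-containment. It uses only tools already in the paper: Lemma~\ref{L3.1}, Lemma~\ref{u1}, the spectral argument, and the rearrangement inequalities of Lemma~\ref{P5.1}. It avoids both the appeal to \cite{DB} and the decay/center-of-mass step.

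The price is twofold. You need the slightly stronger local-uniqueness form of Theorem~\ref{T1.1}, valid for arbitrary solutions converging to $u_0$ and not only ground states. Your $x_0$ in (1) is also only implicit, coming from Theorem~\ref{T1.1}. The paper's route, by contrast, shows directly that every ground state centered at its mass center is itself cylindrically symmetric.
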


\noindent{\bf Proof.}
We first prove \eqref{equal-energy} as follows. Let $u$ be a ground state of \eqref{equ}. By Kato's inequality
(cf. \cite{Lieb01}), one can prove that $|u|$ decays exponentially at
infinity. Hence, the $L^2$-mass center
\begin{equation}\label{a}
	x_0=-\f{\int_{\R}x|u(x)|^2dx}
	{\int_{\R}|u(x)|^2dx}
\end{equation}
is well-defined. Set $v:=\tau_{x_0,A}u$, so that $v$ is also a
ground state of \eqref{equ}. Moreover, it follows from \eqref{1.45}
and \eqref{a} that
\begin{equation}\label{xx}
	\int_{\R}x|v(x)|^2dx=0.
\end{equation}
By a similar argument of \cite[Proposition 5.1 and Lemma 5.2]{DB}, we then obtain from \eqref{xx} that there exists a sufficiently small $\varepsilon>0$ such that for any $0<|b|<\varepsilon$,
\begin{equation*}
	iA(x)\cdot\nabla v=0 \ \text{ in } \ \R,
\end{equation*}
and  $v$ is also a solution of \eqref{22}. It then follows from \eqref{inf} and \eqref{web} that
\begin{equation}\label{energy1}
\widetilde{e}(b)\leq\widetilde{I}_b(v)=I_b(v)=e(b).
\end{equation}

On the other hand, it yields from Lemma \ref{L4.1} that \eqref{22} admits a
ground state $\widetilde v_b$, which can be chosen to be real-valued
and nonnegative. We then have
\begin{equation*}
|\nabla_A\widetilde v_b|^2=|\nabla\widetilde v_b|^2+|A|^2|\widetilde v_b|^2 \ \text{ in }\ \R.
\end{equation*}
Hence, the Nehari constraints associated with $I_b$ and
$\widetilde I_b$ coincide at $\widetilde v_b$, and
\begin{equation*}
\widetilde e(b)=\widetilde I_b(\widetilde v_b)=I_b(\widetilde v_b)\geq e(b).
\end{equation*}
Together with \eqref{energy1}, this thus proves \eqref{equal-energy} for $0<|b|<\varepsilon$.

It now follows from \eqref{equal-energy} and \eqref{energy1} that for $0<|b|<\varepsilon$,
\begin{equation*}
\widetilde I_b(v)=\widetilde e(b),
\end{equation*}
and hence $v=\tau_{x_0,A}u$ is a ground state of \eqref{22}, where $u$ is a ground state of \eqref{equ}. This proves Proposition \ref{P}(1).

To prove Proposition \ref{P}(2), let $w$ be a ground state
of \eqref{22}. It then follows from \cite[Theorem 7.8]{Lieb01} that
\begin{equation*}
\widetilde Q_b(|w|)\leq\widetilde Q_b(w).
\end{equation*}
We thus derive from Lemma \ref{L4.1} that a suitable multiplicity of $|w|$ is a nonnegative ground state of \eqref{22}.
Hence, by the strong maximum principle, we deduce that $|w|>0$ in $\R$. This further yields from \cite[Theorem 7.8]{Lieb01} that there exists $\theta\in\mathbb R$ such that $w=e^{i\theta}|w|$. We then have
\begin{equation*}
	|\nabla_Aw|^2=|\nabla w|^2+|A|^2|w|^2 \ \text{ in } \ \R.
\end{equation*}
Hence, the Nehari constraint of $I_b$   coincides with the Nehari constraint of
$\widetilde I_b$ at $w$, and we obtain from \eqref{equal-energy} that for $0<|b|<\varepsilon$,
\begin{equation*}
	I_b(w)=\widetilde I_b(w)=\widetilde e(b)=e(b).
\end{equation*}
Further, since the Nehari manifold of $I_b$ is a natural
constraint, we have
\begin{equation*}
	I_b'(w)=0.
\end{equation*}
Thus, $w$ is a ground state of \eqref{equ}. This therefore completes the
proof of Proposition \ref{P}. \qed

\vspace{5pt}

We next address the uniqueness and symmetry of positive ground states
of \eqref{22}.

\begin{Lemma}\label{P5.1}
There exists a sufficiently small $\varepsilon>0$ such that for any $0<|b|<\varepsilon$, the problem \eqref{22} admits a positive
real-valued ground state $U_b$, which is unique, up to translations in
the $x_3$-direction. Moreover, $U_b$ can be chosen to be radially  symmetric and decreasing in $(x_1,x_2)$ and  $x_3$.
\end{Lemma}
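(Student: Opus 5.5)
The plan has four steps: existence of a positive minimizer, symmetrization in $(x_1,x_2)$, uniqueness up to $x_3$-translations, and symmetrization in $x_3$.

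\emph{Existence.} By Lemma \ref{L4.1}, the minimization \eqref{qb11} admits a minimizer $w\in\H_b$. Since $\widetilde Q_b(|w|)\le\widetilde Q_b(w)$ (cf. \cite[Theorem 7.8]{Lieb01}), we may assume $w\ge0$. Rescaling by $t_w=(A_w/B_w)^{1/2}$, where $A_w$ now denotes $\int[|\nabla w|^2+(1+|A|^2)|w|^2]dx$, we obtain a nonnegative ground state of \eqref{22}. The convolution term is positive and $-\Delta+1+|A|^2$ has locally bounded potential, so the strong maximum principle gives $U_b>0$.

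\emph{Symmetrization in $(x_1,x_2)$.} Apply to $U_b$ the Schwarz symmetrization $U_b^\#$ in the variables $(x_1,x_2)$, for each fixed $x_3$. We check that $\widetilde Q_b$ does not increase, term by term:
\begin{itemize}
\item The Pólya--Szeg\H{o} inequality for Steiner-type symmetrization gives $\int|\nabla U_b^\#|^2\le\int|\nabla U_b|^2$.
\item The $L^2$ norm is preserved.
\item Since $|A|^2=\frac{b^2}{4}(x_1^2+x_2^2)$ is radially increasing in $(x_1,x_2)$, the rearrangement inequality gives $\int|A|^2|U_b^\#|^2\le\int|A|^2|U_b|^2$.
\item The Riesz rearrangement inequality for partial (Steiner) symmetrization, with the kernel $|x-y|^{-1}$ symmetric decreasing in $x'-y'$, shows the Coulomb term does not decrease.
\end{itemize}
Hence $U_b^\#$ is again a minimizer of $\widetilde q(b)$. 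The equality in $\int|A|^2|U_b|^2$ is the key point: $|A|^2$ is strictly increasing in $|x'|$, where $x'=(x_1,x_2)$. So equality forces each slice $U_b(\cdot,x_3)$ to be equimeasurable with, and pointwise equal to, its symmetrization about the origin, with no translation freedom. Thus $U_b=U_b^\#$, i.e.\ $U_b$ is radially symmetric and decreasing in $(x_1,x_2)$.

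\emph{Uniqueness up to $x_3$-translations.} Let $U,V>0$ be two positive ground states of \eqref{22}. By Proposition \ref{P}(2) both are ground states of \eqref{equ}, so Theorem \ref{T1.1} gives $U=e^{i\theta}\tau_{a,A}V$. Taking moduli yields $U(x)=V(x-a)$. Both are radially symmetric in $(x_1,x_2)$ about the origin by the previous step (applied to any positive ground state). Hence $a'=(a_1,a_2)=0$, the phase $e^{-iA(a)\cdot x}$ is trivial, and $U=V(\cdot-a_3e_3)$.

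\emph{Symmetrization in $x_3$.} Now apply Steiner symmetrization in $x_3$ to $U_b$. The weight $|A|^2$ does not depend on $x_3$, so the same inequalities show $U_b^{*}$ is a minimizer. After rescaling it is a positive ground state of \eqref{22}, so by uniqueness $U_b^{*}=U_b(\cdot-te_3)$ for some $t$. Replacing $U_b$ by this translate, it is even and decreasing in $x_3$, and this operation preserves the $(x_1,x_2)$ symmetry.

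\emph{Main obstacle.} The expected difficulty is the step of applying the $(x_1,x_2)$-symmetry to an arbitrary positive ground state to pin down $a'=0$. This requires the strict monotonicity argument for the weighted term to be rigorous. One needs that equality in the rearrangement inequality with a strictly increasing weight forces symmetry; I would use the bathtub-type characterization, slice by slice for a.e.\ $x_3$. If that fails, the alternative is to use uniqueness plus $(x_1,x_2)$-rotation invariance of \eqref{22}, which shows any positive ground state is cylindrically symmetric about some axis.
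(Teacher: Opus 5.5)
Your proof is correct and uses the same ingredients as the paper. These are: the modulus trick plus the strong maximum principle for existence; Proposition~\ref{P}(2) together with Theorem~\ref{T1.1} for uniqueness; Schwarz symmetrization in $(x_1,x_2)$, where the strict radial monotonicity of $|A|^2$ settles the equality case (the paper cites an external equality theorem here, you use a bathtub argument); and Steiner symmetrization in $x_3$ combined with uniqueness. The one real difference is how you obtain $a'=(a_1,a_2)=0$. You first show every positive ground state is radial about the $x_3$-axis, then argue that a positive slice cannot be radial about two centers. Spell that step out: radial about $0$ and about $a'$ forces $2a'$-periodicity, which is incompatible with positivity and $L^2$. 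The paper instead observes that $e^{i\theta}e^{-iA(a)\cdot x}=U_1(x)/U_2(x-a)$ is real and positive for every $x$. Since $A(a)\cdot x$ is linear in $x$, this forces $A(a)=0$, i.e.\ $a_1=a_2=0$ because $b\neq0$. So in the paper the uniqueness step is independent of the symmetry step, and the ``main obstacle'' you flag is not an obstacle for uniqueness; the equality case is needed only for the symmetry assertion itself. A minor point: in the $x_3$ step the rescaling constant is $1$, which is what justifies writing $U_b^{*}=U_b(\cdot-te_3)$ with no prefactor. This holds because all rearrangement inequalities become equalities, equivalently the $L^2$ norms coincide, as the paper uses. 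Your existence step works with $|w|$ directly rather than going through Proposition~\ref{P}; that is fine and does not even require $|b|$ small.
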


\noindent{\bf Proof.}
Let $v\in\H_b\setminus\{0\}$ be a minimizer of $\widetilde q(b)$ defined by \eqref{qb11}. Choose $t_v>0$ such that $t_vv$ satisfies the Nehari constraint of $\widetilde I_b$. By Lemma \ref{L4.1}, $t_vv$ is a ground state of \eqref{22}. By Proposition \ref{P}, then there exists a sufficiently small $\varepsilon>0$ such that for any $0<|b|<\varepsilon$, there exists $\theta\in\mathbb R$ such that $t_vv\equiv e^{i\theta}|t_vv|$,
and  $t_vv$ is also a ground state of \eqref{equ}. Therefore, if $0<|b|<\varepsilon$, then the function
\begin{equation}\label{uubb}
U_b:=t_ve^{-i\theta}v=|t_vv|
\end{equation}
is a nonnegative real-valued ground state of \eqref{22}. By the
strong maximum principle, if $0<|b|<\varepsilon$, then we further have $U_b>0$ in $\R$.

We next prove the uniqueness, up to translations in
the $x_3$-direction, of positive real-valued ground states for \eqref{22}, where $0<|b|<\varepsilon$ is sufficiently small. Let $U_1$ and $U_2$ be two positive
real-valued ground states of \eqref{22} for sufficiently small $0<|b|<\varepsilon$. By Proposition \ref{P},
they are also ground states of \eqref{equ}. It then follows from
Theorem \ref{T1.1} that there exist
$a=(a_1,a_2,a_3)\in\R$ and $\theta\in\mathbb R$ such that
\begin{equation*}
U_1\equiv e^{i\theta}\tau_{a,A}U_2=e^{i\theta}e^{-iA(a)\cdot x}U_2(x-a) \ \ \mbox{in}\ \ \R.
\end{equation*}
Since $U_1$ and $U_2$ are positive and real-valued in $\R$, we have
\begin{equation*}
A(a)=0\text{ \ and \ }e^{i\theta}=1.
\end{equation*}
It thus follows from \eqref{1.1*} that if $0<|b|<\varepsilon$ is sufficiently small, then $a=(0,0,k)$ holds for some
$k\in\mathbb R$, and
\begin{equation*}
	U_1(x_1,x_2,x_3)\equiv U_2(x_1,x_2,x_3-k) \ \text{ in } \ \R,
\end{equation*}
which implies that positive real-valued ground states of \eqref{22} must be unique, up to translations in the $x_3$-direction.

We now claim that if $0<|b|<\varepsilon$ is sufficiently small, then every minimizer $v\in\H_b\setminus\{0\}$ of $\widetilde q(b)$ is of the form
\begin{equation}\label{qbm}
v(x_1,x_2,x_3)=Ce^{i\theta}U_b(x_1,x_2,x_3-k) \ \text{ \ for some } C>0,\theta\in\mathbb R,k\in\mathbb R,
\end{equation}
where $U_b>0$ is the unique positive ground state of \eqref{22}.
Similar to \eqref{uubb}, one can get that for every minimizer $v\in\H_b\setminus\{0\}$ of $\widetilde q(b)$, there exist
$t_v>0$ and $\theta\in\mathbb R$ such that $t_ve^{-i\theta}v$ is a positive real-valued ground state of \eqref{22}. By the
above uniqueness, if $0<|b|<\varepsilon$ is sufficiently small, then there exists $k\in\mathbb R$ such that
\begin{equation*}
	t_ve^{-i\theta}v(x_1,x_2,x_3)\equiv U_b(x_1,x_2,x_3-k) \ \text{ in } \ \R.
\end{equation*}
Consequently, if $0<|b|<\varepsilon$ is sufficiently small, then the claim (\ref{qbm}) holds for $C=t_v^{-1}>0$.

It remains to prove the symmetry and monotonicity of $U_b>0$ in $\R$. Since
$U_b>0$ is a ground state of \eqref{22}, it is also a minimizer of
$\widetilde q(b)$. Define
\begin{equation*}
	\widetilde U_b(x_1,x_2,x_3)=\big(U_b(\cdot,\cdot,x_3)\big)^*(x_1,x_2),
\end{equation*}
where $(\cdot)^*$ denotes the Schwarz rearrangement of $(\cdot)$ with respect to
$(x_1,x_2)$. Then $\widetilde{U}_b$ is decreasing in $(x_1,x_2)$. Note from \cite{Brock} that
\begin{equation}\label{sv}
\begin{aligned}
	&\int_{\R}|\nabla\widetilde U_b|^2dx
	\leq\int_{\R}|\nabla U_b|^2dx,\ \
	\int_{\R}|\widetilde U_b|^2dx
	=\int_{\R}|U_b|^2dx,\\
	&\iint_{\R\times\R}
	\f{|\widetilde U_b(x)|^2|\widetilde U_b(y)|^2}{|x-y|}dxdy
	\geq
	\iint_{\R\times\R}
	\f{|U_b(x)|^2|U_b(y)|^2}{|x-y|}dxdy.
\end{aligned}
\end{equation}
Moreover, it yields from \cite[Appendix A]{Bella} that for any $x_3\in\mathbb R$,
\begin{equation}\label{4.1}
\begin{aligned}
\int_{\mathbb R^2}(x_1^2+x_2^2)|\widetilde U_b(x_1,x_2,x_3)|^2dx_1dx_2\leq\int_{\mathbb R^2}(x_1^2+x_2^2)|U_b(x_1,x_2,x_3)|^2dx_1dx_2.
\end{aligned}
\end{equation}
Integrating \eqref{4.1} with respect to
$x_3$, we then obtain that
\begin{equation}\label{4.8}
	\int_{\R}(x_1^2+x_2^2)|\widetilde U_b|^2dx\leq\int_{\R}(x_1^2+x_2^2)|U_b|^2dx.
\end{equation}
It thus follows from \eqref{sv} and \eqref{4.8} that
\begin{equation*}
	\widetilde Q_b(\widetilde U_b)\leq\widetilde Q_b(U_b),
\end{equation*}
which implies that $\widetilde U_b$ is also a minimizer of $\widetilde q(b)$.
Hence, all identities of \eqref{sv}  and \eqref{4.8}  hold. In particular, we have
\begin{equation*}
	\int_{\R}(x_1^2+x_2^2)|\widetilde U_b|^2dx=\int_{\R}(x_1^2+x_2^2)|U_b|^2dx.
\end{equation*}
We then obtain from \eqref{4.1} that for   a.e. $ x_3\in\mathbb R$,
\begin{equation*}
\begin{aligned}
\int_{\mathbb R^2}(x_1^2+x_2^2)|\widetilde U_b(x_1,x_2,x_3)|^2dx_1dx_2=\int_{\mathbb R^2}(x_1^2+x_2^2)|U_b(x_1,x_2,x_3)|^2dx_1dx_2.
\end{aligned}
\end{equation*}
It thus yields from \cite[Theorem 4]{Bella} that
\begin{equation*}
\widetilde U_b=U_b \ \text{ a.e. in} \ \R.
\end{equation*}
Since $U_b$ is continuous in $\R$, $U_b$ is radially symmetric and decreasing in $(x_1,x_2)$ for any $x_3\in\mathbb R$.

We finally prove the symmetry and monotonicity of $U_b>0$ in $x_3$. Let $U_b^\dagger$ be the Steiner symmetrization of $U_b$ with respect to $x_3$, $i.e.$,
\begin{equation*}
	U_b^\dagger(x_1,x_2,x_3)=\big(U_b(x_1,x_2,\cdot)\big)^*(x_3),
\end{equation*}
where $(\cdot)^*$ denotes the one-dimensional Schwarz rearrangement of $(\cdot)$
with respect to $x_3$. Then $U_b^\dagger>0$ is even and decreasing with respect to
$x_3$. We then obtain from \cite{Brock} that
\begin{equation}\label{4.10}
\begin{aligned}
	&\int_{\R}|\nabla U_b^\dagger|^2dx
	\leq\int_{\R}|\nabla U_b|^2dx,\ \
	\int_{\R}|U_b^\dagger|^2dx
	=\int_{\R}|U_b|^2dx,\\
	&\iint_{\R\times\R}
	\f{|U_b(x)|^2|U_b(y)|^2}{|x-y|}dxdy
	\leq
	\iint_{\R\times\R}
	\f{|U_b^\dagger(x)|^2|U_b^\dagger(y)|^2}{|x-y|}dxdy.
\end{aligned}
\end{equation}
Since $|A(x)|^2=\f{b^2}{4}(x_1^2+x_2^2)$ is independent of $x_3$, we also have
\begin{equation*}
	\int_{\R}|A|^2|U_b^\dagger|^2dx=\int_{\R}|A|^2|U_b|^2dx.
\end{equation*}
Hence,
\begin{equation*}
	\widetilde Q_b(U_b^\dagger)\leq\widetilde Q_b(U_b),
\end{equation*}
and $U_b^\dagger>0$ is also a minimizer of $\widetilde q(b)$. It then follows from \eqref{qbm} that there exist $C>0$, $\theta\in\mathbb R$, and $k\in\mathbb R$ such that for any $0<|b|<\varepsilon$,
\begin{equation*}
	U_b(x_1,x_2,x_3)\equiv Ce^{i\theta}U_b^\dagger(x_1,x_2,x_3-k) \ \,\text{ in } \, \R.
\end{equation*}
Since $U_b$ and $U_b^\dagger$ are both positive and real-valued in $\R$, we have
$e^{i\theta}=1$. Moreover, since $\|U_b^\dagger\|_{2}=\|U_b\|_{2}$,
we have $C=1$. Thus, up to a suitable translation in
$x_3$, $U_b$ is symmetric and decreasing with respect to
$x_3$. This completes the proof of Lemma \ref{P5.1}. \qed

\vspace{5pt}

\noindent{\bf Proof of Theorem \ref{T1.2}.}
Let $U_b>0$ be the positive real-valued ground state obtained in
Lemma \ref{P5.1}, where $0<|b|<\varepsilon$ holds for some sufficiently small $\varepsilon>0$. After a suitable translation in the
$x_3$-direction, $U_b$ is radially symmetric and decreasing in $(x_1,x_2)$ and in $x_3$. By Proposition \ref{P}, $U_b$ is also a ground state of \eqref{equ}.

Let $u$ be an arbitrary ground state of \eqref{equ}. By Theorem
\ref{T1.1}, if $0<|b|<\varepsilon$ holds for sufficiently small $\varepsilon>0$, then there exist $\theta\in\mathbb R$ and $a\in\mathbb R^3$
such that
\begin{equation*}
u\equiv e^{i\theta}\tau_{a,A}U_b \ \, \text{ in } \, \R.
\end{equation*}
This therefore completes the proof of Theorem \ref{T1.2}. \qed

\section{Refined Asymptotic Expansion of   $e(b)$ as $b\to0$}\label{S5}

This section is devoted to the proof of Theorem \ref{T1.4}, which concerns the refined asymptotic expansion of
the ground state energy $e(b)$ defined by \eqref{inf} as $b\to0$.

Let $u_A>0$ be the real-valued even ground state obtained in Theorem \ref{T1.2}, where $A(x)=\f b2(-x_2,x_1,0)$. Suppose $u_0(x)=u_0(|x|)>0$ is the unique positive ground state of the limiting problem \eqref{w}. The proof of Theorem \ref{T1.4} needs the following crucial proposition.

\begin{Proposition}\label{7.1}
Suppose $A(x)=\f b2(-x_2,x_1,0)$. Then we have
\begin{equation*}
\|u_A-u_0-w_A\|_{H^1}=o(b^2) \text{ \ as \ }b\to0,
\end{equation*}
where $w_A\in H^1(\R,\mathbb C)$ is the unique solution of
\begin{equation}\label{wA}
\left\{
\begin{aligned}
&(-\Delta+1)w_A-w_A\big(|x|^{-1}*|u_0|^2\big)-2u_0\big(|x|^{-1}*(u_0|w_A)\big)=-|A|^2u_0\text{  \ in \ }\R,\\
&\bigl(\nabla u_0\cdot y|w_A\bigr)_{H^1}
=0\text{ \ for every \ }y\in\mathbb R^3,\\
&\bigl(iu_0|w_A\bigr)_{H^1}=0.
\end{aligned}
\right.
\end{equation}
Moreover, $w_A$ is real-valued and even in $\R$.
\end{Proposition}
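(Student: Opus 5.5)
First we construct $w_A$. Write $\mathcal L_0 w:=(-\Delta+1)w-(|x|^{-1}*|u_0|^2)w-2u_0(|x|^{-1}*(u_0|w))$. By Lemma~\ref{P2}, $\ker\mathcal L_0=\operatorname{span}_{\mathbb R}\{\partial_{x_j}u_0,\,iu_0\}$. Since $\mathcal L_0=(-\Delta+1)(I-L)$ with $L$ compact and self-adjoint on $H^1$, the Fredholm alternative applies: $\mathcal L_0 w=f$ is solvable iff $\int(f|z)\,dx=0$ for all $z\in\ker\mathcal L_0$. The solution is then unique in the $H^1$-orthogonal complement of the kernel, which is exactly what the orthogonality conditions in \eqref{wA} impose. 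For $f=-|A|^2u_0=-\frac{b^2}{4}(x_1^2+x_2^2)u_0$, which lies in $H^{-1}$ by the exponential decay of $u_0$, both solvability conditions hold. The condition against $iu_0$ holds because $f$ is real. The conditions against $\partial_{x_j}u_0$ hold because $f$ is even while $\partial_{x_j}u_0$ is odd in $x_j$.

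The same reasoning gives the symmetry claims. The real and imaginary parts decouple, and $\mathcal L_0$ commutes with complex conjugation and with the reflections $x\mapsto -x$ and $x_j\mapsto -x_j$. So $\overline{w_A}$ and $w_A(-\cdot)$ are again solutions satisfying the orthogonality conditions, and uniqueness forces $w_A$ to be real and even. Moreover $w_A=b^2 W$ with $W$ independent of $b$, so $\|w_A\|_{H^1}=O(b^2)$.

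For the error estimate, set $v_A:=u_A-u_0-w_A$. Here $u_A$ is real, solves \eqref{22} because $A\cdot\nabla u_A=0$ by cylindrical symmetry, and after an $x_3$-translation is even. Subtracting the equations for $u_0$ and $w_A$ from that for $u_A$ gives
\[
\mathcal L_0 v_A=-|A|^2(u_A-u_0)+N(u_A)-N(u_0)-N'(u_0)[u_A-u_0],
\]
where $N(u)=u(|x|^{-1}*u^2)$ and the remainder is quadratic in $u_A-u_0$. By Lemma~\ref{u1}, uniqueness and evenness, $u_A\to u_0$ in $H^1$. Since $u_A$ is real and even, $v_A$ is also real and even. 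Hence $v_A$ is $H^1$-orthogonal to $iu_0$ and to the odd functions $\partial_{x_j}u_0$, and the coercivity estimate $\|v\|_{H^1}\le C\|\mathcal L_0 v\|_{H^{-1}}$ on $(\ker\mathcal L_0)^\perp$ applies. Using the Hardy--Littlewood--Sobolev inequality \eqref{hls},
\[
\|v_A\|_{H^1}\le C\big\||A|^2(u_A-u_0)\big\|_{H^{-1}}+C\|u_A-u_0\|_{H^1}^2 .
\]
Writing $u_A-u_0=w_A+v_A$, the quadratic term is $O(b^4)+o(1)\|v_A\|_{H^1}$, which can be absorbed into the left side.

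The main obstacle is the term $\||A|^2(u_A-u_0)\|_{H^{-1}}$, because $|A|^2$ is unbounded and we need it to be $o(b^2)$. I will bound it by $\frac{b^2}{4}\|(x_1^2+x_2^2)(u_A-u_0)\|_{L^2}$ and try to show $\|\,|x|^2(u_A-u_0)\|_{L^2}\to0$. For this I will use uniform exponential decay $u_A\le Ce^{-\delta|x|}$ with $C,\delta$ independent of small $b$. This follows from a comparison argument: $|x|^{-1}*u_A^2\to0$ at infinity uniformly, by the $H^1$ convergence and the HLS inequality, and $|A|^2\ge0$ only helps. Combined with $L^2$ convergence, dominated convergence then gives the claim. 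If that fails, the fallback is to test the equation against $|x|^2$-weighted cutoffs to get a uniform bound on $\int|A|^2|x|^2u_A^2$. Absorbing then yields $\|v_A\|_{H^1}=o(b^2)$.
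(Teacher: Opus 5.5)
Your proposal is correct and follows essentially the same route as the paper. Both arguments use the Fredholm alternative plus uniqueness to get $w_A$, which is real and even by conjugation and reflection invariance; both derive the equation for $v_A=u_A-u_0-w_A$ and get coercivity on the orthogonal complement of the kernel from realness and evenness; and both rely on the key weighted estimate $\|(x_1^2+x_2^2)(u_A-u_0)\|_2\to0$, obtained from uniform exponential decay and dominated convergence, followed by absorption. The only minor difference is in the nonlinear remainder: you measure the right-hand side in $H^{-1}$ and bound the remainder by $C\|u_A-u_0\|_{H^1}^2$ via Hardy--Littlewood--Sobolev, whereas the paper uses the $L^2$ estimate $\|v\|_{H^1}\le C\|f\|_2$ together with the uniform convergence $u_A\to u_0$ in $L^\infty$ to get $o(1)\|u_A-u_0\|_2$.
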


In order to prove Proposition \ref{7.1}, we shall establish the following several lemmas.

\begin{Lemma}\label{L7.1}
Suppose $f\in L^2(\R,\mathbb C)$ satisfies
\begin{equation}\label{ff}
\left\{
\begin{aligned}
&\int_{\R}(\nabla u_0\cdot y|f)dx=0\text{ \ for every \ }y\in\mathbb R^3,\\
&\int_{\R}(iu_0|f)dx=0,
\end{aligned}
\right.
\end{equation}
where $u_0=u_0(|x|)>0$ is the unique positive ground state of \eqref{w}.
Then the problem
\begin{equation}\label{v}
\left\{
\begin{aligned}
&-\Delta v+v-v\big(|x|^{-1}*|u_0|^2\big)-2u_0\big(|x|^{-1}*(u_0|v)\big)
=f \text{ \ in \ }\R,\\
&\bigl(\nabla u_0\cdot y|v\bigr)_{H^1}=0\text{ \ for every \ }y\in\mathbb R^3,\\
&\bigl(iu_0|v\bigr)_{H^1}=0
\end{aligned}
\right.
\end{equation}
has a unique solution $v\in H^1(\R,\mathbb C)$, which satisfies
\begin{equation}\label{7.35}
\|v\|_{H^1}\leq C\|f\|_{2}.
\end{equation}
\end{Lemma}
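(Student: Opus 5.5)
The plan is to recast \eqref{v} as a Fredholm problem on $H^1(\R,\mathbb C)$ with the real inner product $(\cdot|\cdot)_{H^1}$, and then use the nondegeneracy Lemma~\ref{P2}. Write the left-hand side of \eqref{v} as $(-\Delta+1)(I-L)v$, where $L$ is the compact self-adjoint operator from \eqref{lakA}. Its compactness follows exactly as in Lemma~\ref{LLn}, with $A_n=0$ and $h_n(t)\equiv u_0$. Setting $g:=(-\Delta+1)^{-1}f\in H^1$, with $\|g\|_{H^1}\le\|f\|_{2}$, the equation becomes $(I-L)v=g$. By Lemma~\ref{P2} and Remark~\ref{remark:2.1}, applied to $u=u_0$,
\[
\ker(I-L)=Z:=\operatorname{span}_{\mathbb R}\{\partial_{x_1}u_0,\partial_{x_2}u_0,\partial_{x_3}u_0,iu_0\}.
\]

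\textbf{Step 1 (solvability).} Since $L$ is compact and self-adjoint in $H^1$, Fredholm theory gives $\operatorname{Ran}(I-L)=Z^{\perp_{H^1}}$. So I will check that $g\perp_{H^1}Z$. For any $z\in Z$ we have
\[
(g|z)_{H^1}=\int_{\R}\big[(\nabla g|\nabla z)+(g|z)\big]dx=\int_{\R}(f|z)\,dx,
\]
since $(-\Delta+1)g=f$ weakly. This vanishes by the hypothesis \eqref{ff}.

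\textbf{Step 2 (uniqueness and normalization).} The operator $I-L$ restricted to $Z^{\perp}$ is a bijection onto $Z^{\perp}$, and $Z^{\perp}$ is closed and invariant under $L$. Hence there is exactly one solution $v\in Z^\perp$. The two orthogonality conditions in \eqref{v} say precisely that $v\in Z^\perp$: the condition $(\nabla u_0\cdot y|v)_{H^1}=0$ for all $y$ covers the three translation modes. Any other solution differs from $v$ by an element of $Z$, and such an element lies in $Z^\perp$ only if it is $0$, so the solution is unique.

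\textbf{Step 3 (the bound \eqref{7.35}).} The spectral facts already used in the proof of Theorem~\ref{T1.1} say that $\lambda_1(L)=3$, $\lambda_k(L)=1$ for $2\le k\le5$, and $\lambda_6(L)<1$. Moreover $Z$ is exactly the eigenspace for the eigenvalue $1$, and $u_0$ spans the eigenspace for $3$ (indeed $Lu_0=3u_0$). So on $Z^\perp$ the spectrum of $L$ avoids a neighborhood of $1$. Concretely, let $\delta:=\min\{2,\,1-\lambda_6(L)\}>0$. Decomposing $v\in Z^\perp$ along eigenvectors gives $\|(I-L)v\|_{H^1}\ge\delta\|v\|_{H^1}$. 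Therefore
\[
\|v\|_{H^1}\le\delta^{-1}\|g\|_{H^1}\le\delta^{-1}\|f\|_{2}.
\]

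The main obstacle is identifying $\ker(I-L)$ exactly with $Z$, together with the spectral gap around $1$. The kernel identification rests on the nondegeneracy Lemma~\ref{P2} from \cite{Len}, and the gap on the cited eigenvalue facts, in particular that $\lambda_6(L)<1$ and that $1$ does not accumulate (which compactness guarantees). The remaining care point is the identity $(g|z)_{H^1}=\int_{\R}(f|z)\,dx$, which ties the $L^2$ orthogonality assumed in \eqref{ff} to the $H^1$ orthogonality that the Fredholm alternative requires.
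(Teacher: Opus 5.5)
Your proposal is correct and follows the paper's proof. You rewrite the problem as $(I-L)v=(-\Delta+1)^{-1}f$, identify $\ker(I-L)$ via Lemma~\ref{P2}, get solvability in $\ker(I-L)^{\perp}$ from the Fredholm alternative, and get uniqueness from $\ker(I-L)\cap\ker(I-L)^{\perp}=\{0\}$. Two minor differences: you spell out the identity $(g|z)_{H^1}=\int_{\R}(f|z)\,dx$ that turns \eqref{ff} into $H^1$-orthogonality, and you derive \eqref{7.35} from the explicit gap $\delta=\min\{2,1-\lambda_6(L)\}$ rather than the paper's bounded invertibility of $(I-L)|_{\ker(I-L)^{\perp}}$, which amounts to the same thing.
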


\noindent{\bf Proof.}
Define the operator $L: H^1(\R,\mathbb{C})\to H^1(\R,\mathbb{C}) $ by
\begin{equation*}
	Lu=(-\Delta+I)^{-1}\Big[u\big(|x|^{-1}*|u_{0}|^2\big)+2u_{0}\big(|x|^{-1}*(u_{0}|u)\big)\Big] \ \text{ in } \ \R.
\end{equation*}
Similar to Lemma \ref{LLn}, one can verify that $L$ is self-adjoint and compact in $H^1(\R,\mathbb{C})$.
We then rewrite the equation of \eqref{v} as
\begin{equation}\label{7.352}
	(I-L)v=(-\Delta+I)^{-1}f\ \text{ \ in \ }\R.
\end{equation}
It yields from Lemma \ref{P2} that
\begin{equation*}
\ker(I-L)=\operatorname{span}_{\mathbb R} \{iu_0,\partial_{x_1}u_0,\partial_{x_2}u_0, \partial_{x_3}u_0\}.
\end{equation*}
By Fredholm's alternative theorem (cf. \cite{Evans}), the orthogonality condition of \eqref{ff} then yields that the equation \eqref{v} admits a solution $v \in (\ker(I-L))^{\bot}$ in $ H^1(\R,\mathbb{C})$.

We next prove the uniqueness of solutions for \eqref{v}. Let $v_1$ and $v_2$ be two different solutions
of \eqref{v}, and set $w:=v_1-v_2$. We then have
\begin{equation*}
(I-L)w=0\ \text{ \ in \ }\R,
\end{equation*}
and $w\in\ker(I-L)^\perp$. Hence,
\begin{equation*}
w\in\ker(I-L)\cap\ker(I-L)^\perp,
\end{equation*}
which implies that $w\equiv 0$ in $\R$. Therefore, the solutions of \eqref{v} must be
unique.

It remains to prove the estimate \eqref{7.35}. Since $L$ is self-adjoint and
compact in $H^1(\R,\mathbb{C})$, $(I-L)|_{\ker(I-L)^\perp}$ is bounded and invertible onto $\operatorname{Range}(I-L)$.
It thus follows from \eqref{7.352} that
\begin{equation*}
\begin{aligned}
\|v\|_{H^1}
&=\|(I-L)^{-1}[(I-L)v]\|_{H^1}
 \leq C\|(I-L)v\|_{H^1}\\
&=C\|(-\Delta+I)^{-1}f\|_{H^1}\\
&\leq C\sqrt{\|(-\Delta+I)^{-1}f\|_{2}\|f\|_{2}
} \leq C\|f\|_{2},
\end{aligned}
\end{equation*}
and hence \eqref{7.35} holds true. This therefore completes the proof of Lemma \ref{L7.1}. \qed

\vspace{5pt}

\begin{Lemma}\label{L7.2}
The problem \eqref{wA} has a unique solution
$w_A\in H^1(\R,\mathbb C)$, which   is real-valued and even in $\R$.
\end{Lemma}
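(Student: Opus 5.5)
Existence and uniqueness will follow from Lemma \ref{L7.1} with $f:=-|A|^2u_0=-\frac{b^2}{4}(x_1^2+x_2^2)u_0$. Since $u_0$ decays exponentially, $f\in L^2(\R,\mathbb C)$. I then need to check the orthogonality conditions \eqref{ff}.

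First, $f$ is real-valued, so $(iu_0|f)=\operatorname{Re}(iu_0\bar f)=0$ pointwise, because $u_0f$ is real.

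Second, $\nabla u_0\cdot y=u_0'(|x|)\frac{x\cdot y}{|x|}$ is odd in $x$, while $f$ is even (indeed invariant under $x\mapsto -x$). Hence $\int_{\R}(\nabla u_0\cdot y|f)dx=0$. Lemma \ref{L7.1} then gives a unique $w_A\in H^1(\R,\mathbb C)$ solving \eqref{wA}, together with $\|w_A\|_{H^1}\le Cb^2$.

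For the qualitative properties, I will use uniqueness together with symmetry of the problem. Let $\sigma$ be any of the following maps: complex conjugation $w\mapsto\bar w$, the reflection $w\mapsto w(-x)$, or the coordinate reflections $w\mapsto w(x_1,x_2,-x_3)$, $w\mapsto w(-x_1,x_2,x_3)$, etc. In each case I will verify three things:
\begin{itemize}
\item[(i)] $\sigma$ commutes with $-\Delta+1$ and with the nonlocal part $w\mapsto w(|x|^{-1}*|u_0|^2)+2u_0(|x|^{-1}*(u_0|w))$. This uses that $u_0$ is real and radial, that the Coulomb kernel is invariant under reflections, and that $(u_0|\bar w)=(u_0|w)$ since $u_0$ is real.
\item[(ii)] $\sigma f=f$.
\item[(iii)] $\sigma$ preserves the constraints in \eqref{wA}. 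For conjugation, $(iu_0|\bar w)_{H^1}=-(iu_0|w)_{H^1}$ and $(\nabla u_0\cdot y|\bar w)_{H^1}=(\nabla u_0\cdot y|w)_{H^1}$, since the real part of $a\overline{\bar w}$ equals the real part of $\bar a w$ and $\nabla u_0\cdot y$ is real. For a reflection $R$, $(\nabla u_0\cdot y|w\circ R)_{H^1}=(\nabla u_0\cdot Ry|w)_{H^1}$, by change of variables and the radial symmetry of $u_0$, while $(iu_0|w\circ R)_{H^1}=(iu_0|w)_{H^1}$.
\end{itemize}
Consequently $\sigma w_A$ also solves \eqref{wA}, and uniqueness yields $\sigma w_A=w_A$. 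Taking $\sigma=$ conjugation gives that $w_A$ is real-valued. Taking $\sigma$ to be $x\mapsto -x$, or the individual coordinate reflections, gives that $w_A$ is even (indeed even in each variable).

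I expect the only slightly delicate point to be the precise bookkeeping in step (iii), namely the sign conventions of the real inner product $(\cdot|\cdot)$ under conjugation. As computed there, the constraint set is mapped into itself, so I do not expect any real obstacle.
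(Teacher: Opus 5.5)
Your proposal is correct and follows the paper's proof. You apply Lemma \ref{L7.1} with $f=-|A|^2u_0$, which is real and even and hence satisfies \eqref{ff}, and then use that $\overline{w_A}$ and $w_A(-x)$ again solve \eqref{wA}, so uniqueness forces $w_A$ to be real and even. Your explicit check that conjugation and reflections preserve the $H^1$-orthogonality constraints fills in a step the paper leaves implicit, and the coordinate reflections give the slightly stronger conclusion that $w_A$ is even in each variable.
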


\noindent{\bf Proof.}
Since $u_0=u_0(|x|)$ and $|A|^2$ are real-valued and even in $\R$, we have
\begin{equation*}
\left\{
\begin{aligned}
&\int_{\R}
(\nabla u_0\cdot y\big||A|^2u_0)dx=0\text{ \ for every \ }y\in\mathbb R^3,\\
&\int_{\R}(iu_0\big||A|^2u_0)dx=0.
\end{aligned}
\right.
\end{equation*}
We then derive from Lemma \ref{L7.1} that \eqref{wA} admits a solution
$w_A\in H^1(\R,\mathbb{C})$. Since both $w_A(-x)$ and $\overline{w_A(x)}$ solve
\eqref{wA}, the uniqueness of Lemma \ref{L7.1} implies that
\begin{equation*}
w_A(-x)\equiv w_A(x)
\quad\text{and}\quad
\overline{w_A(x)}\equiv w_A(x)\quad\text{in }\ \R.
\end{equation*}
Thus, $w_A$ is even and real-valued in $\R$, and we are done. \qed

\vspace{5pt}

We also need the following uniform decaying estimate of ground states for \eqref{equ} as $b\to 0$.

\begin{Lemma}\label{L7.3}
Let $u_A>0$ be the even ground state obtained in Theorem \ref{T1.2}, where $A(x)=\f b2(-x_2,x_1,0)$. Then we have
\begin{equation}\label{uniform}
u_A\to u_0 \text{ \ strongly in \ }L^\infty(\R)\text{  \ as \ }b\to0.
\end{equation}
where $u_0=u_0(|x|)>0$ is a unique ground state of \eqref{w}.
Moreover, there exists a sufficiently small $\varepsilon>0$
such that for any $0<|b|<\varepsilon$,
\begin{equation}\label{Kato-D}
|u_A(x)|\leq Ce^{-(1-\alpha)|x|} \text{ \ in \ }\R,
\end{equation}
where     $C>0$ and $0<\alpha<1$ are independent of $b$.
\end{Lemma}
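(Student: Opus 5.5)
The idea is to regard $u_A$ as a solution of the real auxiliary equation \eqref{22}. I would first get uniform exponential decay by comparison, and then upgrade the known $H^1$ convergence to $L^\infty$ convergence via elliptic regularity.

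By Theorem \ref{T1.2} and Proposition \ref{P}, $u_A>0$ is real-valued and solves
\begin{equation*}
-\Delta u_A+(1+|A|^2)u_A=\big(|x|^{-1}*u_A^2\big)u_A\ \text{ in }\ \R .
\end{equation*}
Proposition \ref{P3.1} and Lemma \ref{u1}, combined with the uniqueness of $u_0$, give $u_A\to u_0$ in $H^1(\R)$ up to a translation. Because $u_A$ is even and symmetric in all the relevant variables, the translation can be taken to vanish. I would check this by noting that the $L^2$ mass centre of $u_A$ is $0$, so any limit translation $c$ must satisfy $\int x\,u_0^2(x-c)\,dx=0$, which forces $c=0$. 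In the real setting $|A|u_A$ causes no trouble, since $\int|A|^2u_A^2\le \|\nabla_Au_A\|_2^2$ is bounded.

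Next I would control the potential $V_A:=|x|^{-1}*u_A^2$. Hardy–Littlewood–Sobolev \eqref{hls}, applied separately to the near and far parts of the kernel, together with the uniform $H^1$ bound, gives a uniform bound on $\|V_A\|_{L^\infty}$. Moreover $V_A\to V_0:=|x|^{-1}*u_0^2$ in $L^\infty$, because $u_A^2\to u_0^2$ in $L^1\cap L^{3}$ by Sobolev embedding. Since $V_0(x)\to0$ as $|x|\to\infty$ (indeed $V_0\sim \|u_0\|_2^2/|x|$), for a given $\alpha\in(0,1)$ I can choose $R$ and $\varepsilon$ such that $V_A\le 1-(1-\alpha)^2$ on $|x|\ge R$ for all $|b|<\varepsilon$. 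Standard Moser iteration/local $L^\infty$ estimates for $-\Delta u_A+u_A\le V_Au_A$, with uniformly bounded $V_A$, then give $\|u_A\|_{L^\infty}\le C$ uniformly. The same estimates applied to the difference equation
\begin{equation*}
-\Delta(u_A-u_0)+(u_A-u_0)=V_A(u_A-u_0)+(V_A-V_0)u_0-|A|^2u_A
\end{equation*}
yield the $L^\infty$ convergence \eqref{uniform}. The one delicate term is $|A|^2u_A$, which is not small globally. Locally, however, on $B_\rho(y)$ with $|y|\le R_1$ it is $O(b^2R_1^2)$. For $|y|>R_1$ the decay estimate below makes both $u_A$ and $u_0$ uniformly small. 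So I would prove the decay estimate first and the convergence afterwards.

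For the decay I would use comparison. Since $|A|^2\ge0$, on $\{|x|\ge R\}$ we have
\begin{equation*}
-\Delta u_A+(1-\alpha)^2u_A\le -\Delta u_A+(1-V_A)u_A\le 0 .
\end{equation*}
The function $\phi=Me^{-(1-\alpha)|x|}$ satisfies $-\Delta\phi+(1-\alpha)^2\phi\ge0$ for $x\ne0$. Choosing $M=\sup\|u_A\|_\infty e^{(1-\alpha)R}$, which is uniform in $b$, the maximum principle on the exterior domain (both functions vanish at infinity, as $u_A\in H^1$ is subharmonic-like there) gives $u_A\le\phi$. This is \eqref{Kato-D}.

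The main obstacle is making the $L^\infty$ smallness of $V_A$ at infinity uniform in $b$, i.e.\ excluding any mass of $u_A$ escaping to infinity. This follows from the strong $H^1$ convergence $u_A\to u_0$: the tail $\|u_A\|_{L^2\cap L^{12/5}(B_{R}^c)}$ is then uniformly small. Splitting $V_A(x)$ into contributions from $|y|\le R/2$ (bounded by $2\|u_A\|_2^2/|x|$) and from $|y|>R/2$ (small by Hardy–Littlewood–Sobolev and the tail bound) handles this.
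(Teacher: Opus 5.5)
Your proposal is correct and follows essentially the paper's route: $H^1$ (hence $L^p$) convergence to $u_0$, uniform smallness of $|x|^{-1}*u_A^2$ outside a fixed ball, local $L^\infty$ bounds with comparison against an exponential barrier to get \eqref{Kato-D}, and then decay plus local convergence to get \eqref{uniform}. Working directly with the real equation \eqref{22} instead of Kato's inequality, and making the uniform bound $\sup_b\|u_A\|_\infty$ for the barrier constant explicit, are only cosmetic differences.

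The one step to repair is the removal of the translation. Passing to the limit in $\int x\,u_A^2\,dx$ needs uniform control of $\int|x|u_A^2$, which you only obtain after the decay estimate, so the mass-centre check is circular. Argue by evenness instead: $\||u_A|(\cdot-a_b)-u_0\|_2\to0$ and $u_A(-x)=u_A(x)$ give $\|u_0(\cdot+a_b)-u_0(\cdot-a_b)\|_2\to0$, which forces $a_b\to0$.
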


\noindent{\bf Proof.}
It follows from \cite[Lemma 2.5]{DB} and Proposition \ref{P3.1} that
 \begin{equation}\label{stron}
 u_A\to u_{0}\text{ \  in \ }L^p(\R)  \text{ \ as \ } b\to 0,\ 2\leq p\leq 6.
\end{equation}
By the
standard $L^p$ estimates and Sobolev embedding theorem (cf. \cite{GT}), it then yields that
\begin{equation}\label{5.9}
u_A\to u_0
\quad\text{strongly in } L^\infty_{\loc}(\R)\text{ \ as \ }b\to0.
\end{equation}

We next prove the uniform decaying property of $u_A$ as $|x|\to \infty$. For sufficiently small $0<|b|<\varepsilon$, it follows from  \eqref{stron} that for any $\eta>0$, there exists
$R>0$ such that
\begin{equation}\label{5.95}
\int_{\R\setminus B_R}\big(|u_A|^2+|u_A|^6\big)dx\leq\eta.
\end{equation}
We then obtain that for $|x|\geq2R$,
\begin{equation*}
\begin{aligned}
|x|^{-1}*|u_A|^2\leq&C\Big(\int_{\R\setminus B_R}|u_A|^6dx\Big)^{\f13}+\int_{\R\setminus B_R}|u_A|^2dx+\f1R\|u_A\|_2^2.
\end{aligned}
\end{equation*}
Taking sufficiently small $\eta>0$  and   sufficiently large $R>0$, we then obtain that if $|b|>0$ is sufficiently small, then we have
\begin{equation*}
|x|^{-1}*|u_A|^2\leq\alpha\ \text{ \ uniformly for \ }|x|\geq2R.
\end{equation*}
By Kato's inequality, we also have
\begin{equation}\label{Kato}
-\Delta|u_A|+(1-\alpha)|u_A|\leq0\ \text{ \ for \ }|x|\geq2R.
\end{equation}
Applying De Giorgi-Nash-Moser theory (cf.\cite[Theorem 4.1]{QH}), one can deduce from \eqref{5.95} that
\begin{equation}\label{Kato-B}
|u_A(x)|\to0\text{ \ as \ }|x|\to\infty\text{\ \   for sufficiently small \ }|b|>0.
\end{equation}
The comparison principle then yields from (\ref{Kato}) and (\ref{Kato-B}) that there exists a sufficiently small $\delta>0$
such that for any $0<|b|<\delta$,
\begin{equation}\label{Kato-C}
|u_A(x)|\leq Ce^{-(1-\alpha)|x|}\ \text{ \ in \ }\R,
\end{equation}
where $C>0$ and $0<\alpha<1$ are independent of $b$. This proves (\ref{Kato-D}).
By the exponential decay
of $u_0$, we further obtain from \eqref{5.9} and (\ref{Kato-C}) that \eqref{uniform} holds true.  This therefore completes the proof of Lemma \ref{L7.3}.\qed

\vspace{5pt}

By the dominated convergence theorem, one can derive from Lemma \ref{L7.3} that  the even ground state $u_A>0$   obtained in Theorem \ref{T1.2} satisfies
\begin{equation}\label{weighted}
\big\|(x_1^2+x_2^2)(u_A-u_0)\big\|_{2}\to0\text{ \ as \ }b\to0,
\end{equation}
where $u_0=u_0(|x|)>0$ is the unique positive ground state of \eqref{w}.
We are now ready to prove Proposition \ref{7.1}.

\vspace{5pt}

\noindent{\bf Proof of Proposition \ref{7.1}.}
Set
\begin{equation}\label{5.15A}
h_A:=u_A-u_0,\ \  v_A:=u_A-u_0-w_A=h_A-w_A.
\end{equation}
Since $u_A$ is radially symmetric in $(x_1,x_2)$ for sufficiently small $|b|>0$, we have $A\cdot\nabla u_A=0$ for sufficiently small $|b|>0$. Using the equations of $u_A$ and $u_0$, it then follows from \eqref{wA} that  for sufficiently small $|b|>0$,
\begin{equation}\label{25}
\begin{aligned}
&-\Delta v_A+v_A-v_A\big(|x|^{-1}*|u_0|^2\big)-2u_0\big(|x|^{-1}*(u_0|v_A)\big)\\
=&u_0\big(|x|^{-1}*|h_A|^2\big)+2h_A\big(|x|^{-1}*(u_0|h_A)\big)\\
&\quad+h_A\big(|x|^{-1}*|h_A|^2\big)
-|A|^2h_A:=R_A\text{ \ in \ }\R .
\end{aligned}
\end{equation}
Since $u_A$, $u_0$, and $w_A$ are real-valued and even in $\R$, the function
$v_A$ and the right-hand side $R_A$ of \eqref{25} are also real-valued and
even in $\R$. It then follows that for every $y\in\mathbb R^3$,
\begin{equation*}
\bigl(\nabla u_0\cdot y|v_A\bigr)_{H^1}=0,
\ \
\bigl(iu_0|v_A\bigr)_{H^1}=0
\end{equation*}
 and hence the right-hand side $R_A$ of \eqref{25}
satisfies \eqref{ff}.

By H\"older's inequality, we obtain from \eqref{uniform} that
\begin{equation*}
\big\||x|^{-1}*|h_A|^2\big\|_{\infty}=o(1)\|u_A-u_0\|_{2},\ \
\big\||x|^{-1}*(u_0|h_A)\big\|_{\infty}=o(1)\text{ \ as \ }b\to0,
\end{equation*}
where $h_A$ is as in \eqref{5.15A}.
The right-hand side $R_A$ of \eqref{25} then satisfies
\begin{equation}\label{6.20M}
\begin{aligned}
\|R_A\|_{2}\leq& o(1)\|h_A\|_{2}+\big\||A|^2h_A\big\|_{2}
\leq o(1)\|h_A\|_{H^1}+o(b^2)\text{ \ as \ }b\to0,
\end{aligned}
\end{equation}
where we have used \eqref{weighted} in the second inequality.

Applying Lemma \ref{L7.1}, we deduce from \eqref{wA} that
\begin{equation}\label{6.20}
\|w_A\|_{H^1}\leq C\big\||A|^2u_0\big\|_{2}
\leq Cb^2\text{ \ as \ }b\to0,
\end{equation}
and we obtain from  \eqref{25} and (\ref{6.20M}) that for $h_A=w_A+v_A$,
\begin{equation*}
\begin{aligned}
\|v_A\|_{H^1}
&\leq C\|R_A\|_{2}
\leq o(1)\big(\|w_A\|_{H^1}+\|v_A\|_{H^1}\big) +o(b^2)\text{ \ as \ }b\to0.
\end{aligned}
\end{equation*}
We then derive from \eqref{6.20} that
\begin{equation}\label{6.21}
\|v_A\|_{H^1}=o(b^2)\text{ \ as \ }b\to0.
\end{equation}
The proof of Proposition \ref{7.1} is therefore complete. \qed

\vspace{5pt}

Applying Proposition \ref{7.1}, we are now ready to prove Theorem
\ref{T1.4}.

\vspace{5pt}

\noindent{\bf Proof of Theorem \ref{T1.4}.}
Recall from \eqref{6.20} and \eqref{6.21} that
\begin{equation}\label{6.28}
\|w_A\|_{H^1}=O(b^2),
\quad
\|v_A\|_{H^1}=o(b^2)\text{ \ as \ }b\to0,
\end{equation}
where $u_A=u_0+w_A+v_A$.
Since $u_A$ is real-valued in $\R$, we have
\begin{equation*}
|\nabla_Au_A|^2=|\nabla u_A|^2+|A|^2|u_A|^2 \ \ \ \mbox{in} \ \ \R.
\end{equation*}
Therefore,
\begin{equation}\label{5.15}
e(b)=I_0(u_A)+\f12\int_{\R}|A|^2|u_A|^2dx.
\end{equation}
Since $I_0'(u_0)=0$, it follows from \eqref{6.28} that
\begin{equation}\label{5.16}
\begin{aligned}
I_0(u_A)
&=I_0(u_0)
+\big\langle I_0'(u_0),w_A+v_A\big\rangle
+O\bigl(\|w_A+v_A\|_{H^1}^2\bigr)\\
&=e(0)+O(b^4)
=e(0)+o(b^2)\text{ \ as \  }b\to0.
\end{aligned}
\end{equation}
Moreover, we obtain from \eqref{weighted} that
\begin{equation}\label{5.17}
\int_{\R}|A|^2|u_A|^2dx=\int_{\R}|A|^2|u_0|^2dx+o(b^2) \ \text{ as }\, b\to0.
\end{equation}
We thus obtain from \eqref{5.15}--\eqref{5.17} that
\begin{equation}\label{27}
e(b)=e(0)+\f12\int_{\R}|A|^2|u_0|^2dx+o(b^2)\text{ \ as }\, b\to0.
\end{equation}
Since $u_0$ is radially symmetric in $\R$, we have
\begin{equation*}
\begin{aligned}
\int_{\R}(x_1^2+x_2^2)|u_0|^2dx&=\f23\int_{\R}|x|^2|u_0|^2dx.
\end{aligned}
\end{equation*}
Consequently,
\begin{equation*}
\begin{aligned}
\int_{\R}|A|^2|u_0|^2dx=\f{b^2}{4}\int_{\R}(x_1^2+x_2^2)|u_0|^2dx=\f{b^2}{6}
\int_{\R}|x|^2|u_0|^2dx.
\end{aligned}
\end{equation*}
Substituting this identity into \eqref{27}, we then obtain \eqref{ee}.
This completes the proof of Theorem \ref{T1.4}. \qed

\end{document}